\documentclass[12pt,letterpaper]{amsart}

\setlength{\textheight}{8.5in}
\setlength{\oddsidemargin}{.1in}
\setlength{\evensidemargin}{0in}
\setlength{\textwidth}{6in}

\newtheorem{theorem}{Theorem}[section]
\newtheorem{lemma}[theorem]{Lemma}
\newtheorem{proposition}[theorem]{Proposition}
\newtheorem{corollary}[theorem]{Corollary}

\theoremstyle{definition}

\theoremstyle{remark}

\numberwithin{equation}{section}

\usepackage{latexsym}
\usepackage{amssymb}
\usepackage{amsmath}
\usepackage{amscd}
\usepackage{mathrsfs}

  \DeclareMathOperator{\Span}{{\rm span}}
  \DeclareMathOperator{\GL}{{\rm GL}} \DeclareMathOperator{\SL}{{\rm
      SL}} 
  \DeclareMathOperator{\End}{{\rm End}}


  \DeclareMathOperator{\norL}{{\rm N}_L}
  \DeclareMathOperator{\nor}{{\rm N}_G} 
  
  \DeclareMathOperator{\noroneL}{{\rm N}^1_L}
  \DeclareMathOperator{\supp}{{\rm supp}} 
  \DeclareMathOperator{\Stab}{{\rm Stab}} 
  \DeclareMathOperator{\Ad}{{\rm Ad}} 
  \DeclareMathOperator{\ad}{{\rm ad}} 
  \DeclareMathOperator{\diag}{\rm{diag}} 
  \DeclareMathOperator{\Lie}{{\rm Lie}} 
  \DeclareMathOperator{\Cc}{{\rm C}_{{\rm c}}} 

   \renewcommand{\phi}{\varphi}

 \newcommand{\vect}[1]{{\boldsymbol{#1}}}

   \newcommand{\vt}{\vect{t}}
   \newcommand{\vv}{\vect{v}}
  \newcommand{\vw}{\vect{w}}
  \newcommand{\vx}{\vect{x}}

  \newcommand{\vmu}{\vect{\mu}}
  \newcommand{\vnu}{\vect{\nu}}
  \newcommand{\vxi}{\vect{\xi}}
  
  \newcommand{\vtau}{\vect{\tau}}

  \newcommand{\vzero}{\vect{0}}

  \newcommand{\field}[1]{\mathbb{#1}} 
  \newcommand{\R}{\field{R}} \newcommand{\N}{\field{N}}
   \newcommand{\Z}{\field{Z}}

  \providecommand{\abs}[1]{\lvert#1\rvert}
  \providecommand{\Abs}[1]{\left\lvert #1 \right\rvert}
  \providecommand{\norm}[1]{\lVert#1\rVert}

  \providecommand{\trn}[1]{{\,^{\bf t}\!#1}}

  \renewcommand{\setminus}{\smallsetminus}


  \newcommand{\inv}{^{-1}}

  \newcommand{\tcup}[1]{\textstyle{\bigcup_{#1}}\,}

  \newcommand{\psmat}[1]{\bigl(\begin{smallmatrix} #1
    \end{smallmatrix}\bigr)}

  \newcommand{\toinfty}{\stackrel{i\to\infty}{\longrightarrow}}

  \newcommand{\la}[1]{\mathfrak{\lowercase{#1}}}

  \newcommand{\cA}{\mathcal{A}} \newcommand{\sA}{\mathscr{A}}

  \newcommand{\cB}{\mathcal{B}} 

   \newcommand{\cC}{\mathcal{C}}

  \newcommand{\cD}{\mathcal{D}}

  \newcommand{\cE}{\mathcal{E}}

  \newcommand{\cF}{\mathcal{F}} 

   \newcommand{\sH}{\mathscr{H}}

  \newcommand{\cK}{\mathcal{K}}

  \newcommand{\cN}{\mathcal{N}} \newcommand{\sN}{\mathscr{N}}

  \newcommand{\cO}{\mathcal{O}}

  \newcommand{\cT}{\mathcal{T}}  \newcommand{\sT}{\mathscr{T}}

  \newcommand{\cW}{\mathcal{W}} \newcommand{\sW}{\mathscr{W}}
  \newcommand{\frakW}{\mathfrak{W}} 
\newcommand{\sw}{\mathfrak{w}}

\newcommand{\lml}{{L/\Lambda}}


  \begin{document}

  \title[Expanding translates of curves]{Expanding translates of curves and
    Dirichlet-Minkowski theorem on linear forms}
  \author{Nimish A. Shah}
  \address{Tata Institute of Fundamental Research, Mumbai 400005,
    INDIA}
  \email{nimish@math.tifr.res.in}

  \date{February 22, 2008}


 \subjclass[2000]{22E40, 11J83}

  \begin{abstract}
    We show that a multiplicative form of Dirichlet's theorem on
    simultaneous Diophantine approximation as formulated by Minkowski,
    cannot be improved for almost all points on any analytic curve on
    $\R^k$ which is not contained in a proper affine subspace. Such an
    investigation was initiated by Davenport and Schmidt in the late
    sixties.
    
    The Diophantine problem is then settled via showing that certain
    sequence of expanding translates of curves on the homogeneous
    space of unimodular lattices in $\R^{k+1}$ gets equidistributed in
    the limit. We use Ratner's theorem on unipotent flows,
    linearization techniques, and a new observation about intertwined
    linear dynamics of various $\SL(m,\R)$'s contained in
    $\SL(k+1,\R)$.
  \end{abstract}

  \maketitle

  \section{Introduction}
  Extending Dirichlet's theorem (1842) on simultaneous Diophantine
  approximation in various forms, Minkowski (1896) proved the
  following theorem as a consequence of his convex body theorem
  \cite[Chap.-II]{Schmidt:Springer-lect}:

  \subsection*{Minkowski's theorem on linear forms}. \emph{Let
    $(\phi_{ij})\in\SL(n,\R)$ and $\alpha_1,\dots,\alpha_n$ be
    positive numbers with $\alpha_1\cdots \alpha_n=1$. Then there
    exist integers $x_1,\dots,x_n$, not all zero,  such that
    \begin{align}
      \label{eq:88}
      \begin{split}
      \abs{\phi_{11} x_1 +\dots+\phi_{1n} x_n}&\leq \alpha_1\\
      \abs{\phi_{i1} x_1+\dots+\phi_{in} x_{in}}&< \alpha_i\quad
      (2\leq i\leq n).
    \end{split}
  \end{align}}

By putting $\phi_{11}=\dots=\phi_{nn}=1$, and $\phi_{ij}=0$ for $i\neq
j$ and $i\geq 2$, we obtain a multiplicative variation of Dirichlet's
theorem: {\em Given $(\xi_1,\dots,\xi_k)\in\R^k$ and positive integers
  $N_1,\dots, N_k$, there exist integers $q_1,\dots,q_k$ and $p$, not
  all zero, such that
  \begin{align}
    \label{eq:93}
    \abs{q_1\xi_1+\dots+q_k\xi_k-p}\leq (N_1\cdots N_k)\inv, \quad
    \abs{q_i}<N_i \quad (1\leq i\leq k).
  \end{align}}

Following Davenport and Schmidt~\cite{Davenport+Schmidt:Dirichlet}, we
say that given any infinite set $\cN\subset (\Z_+)^k$, the Dirichlet's
theorem (DT) cannot be improved along $\cN$ for
$(\xi_1,\dots,\xi_k)\in\R^k$, if for every $0<\mu<1$ there are
infinitely many $(N_1,\dots,N_k)\in\cN$ such that the following system
of inequalities is insoluble for integers $q_1,\dots,q_k$ and $p$, not
all zero:
  \begin{equation}
    \label{eq:100}
    \abs{q_1\xi_1+\dots+q_k\xi_k-p}\leq \mu (N_1\cdots N_k)\inv, \quad
    \abs{q_i}<\mu N_i\quad (1\leq j\leq k).
  \end{equation}

  Davenport and Schmidt~\cite{Davenport+Schmidt:Dirichlet} showed that
  for $\cN=\{(N,\dots,N)\in\Z^k:N\in\Z_+\}$, the DT cannot be improved
  along $\cN$ for almost all points of $\R^k$. The same conclusion was
  obtained by Kleinbock and Weiss~\cite{Kleinbock+Weiss:Dirichlet} for
  sets $\cN\subset\Z^k$ with infinite projection on each coordinate.

  In fact, Davenport and Schmidt~\cite{DS:curve} showed that for
  $k=2$, and for almost every $\xi\in\R$, the inequalities
  \eqref{eq:100} for $(\xi_1,\xi_2)=(\xi,\xi^2)$ do not have nonzero
  integral solution for infinitely many $N_1=N_2$. Such results for
  related quantities, say for points on a certain type of curve or a
  submanifold, were subsequently generalized in
  \cite{Baker:curves,Dodson:manifolds,Bugeaud:poly,Kleinbock+Weiss:Dirichlet};
  in each case for $\mu\leq \mu_0$ for some small explicit value of
  $\mu_0<1$ depending on the curve or the submanifold.

  In the case of $\cN\subset\{(N,\dots,N)\in\Z^k:N\in\Z_+\}$, in
  \cite{Shah:SLn} it was shown that for any analytic curve which is
  not contained in a proper affine subspace of $\R^k$, the DT cannot
  be improved along $\cN$ for almost all points on the curve; that is,
  for all $\mu<1$. In this article we will extend this result for any
  $\cN$.

  \begin{theorem}
    \label{thm:main-Dirichlet}
    Let $\cN$ be an infinite subset of $(\Z_+)^k$. Then for any
    analytic curve $\phi:[a,b]\to\R^k$ whose image is not contained in
    a proper affine subspace, the DT cannot be improved along $\cN$
    for $\phi(s)$ for Lebesgue almost every $s\in [a,b]$.
  \end{theorem}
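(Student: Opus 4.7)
The plan is to translate Theorem \ref{thm:main-Dirichlet} into an equidistribution statement on $X=\SL(k+1,\R)/\SL(k+1,\Z)$ via the Dani correspondence, and then to establish that equidistribution using Ratner's theorem, linearisation, and a joint analysis of several $\SL(m,\R)$-subactions of $\SL(k+1,\R)$. For $\vect{N}=(N_1,\dots,N_k)\in(\Z_+)^k$, set
\[a(\vect{N})=\diag(N_1\cdots N_k,N_1\inv,\dots,N_k\inv),\]
and for $\vxi\in\R^k$ let $u(\vxi)\in\SL(k+1,\R)$ denote the unipotent matrix with first row $(1,-\xi_1,\dots,-\xi_k)$ and identity elsewhere. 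A direct computation identifies the nonzero solutions of \eqref{eq:100} for $\vxi=\phi(s)$ with the nonzero vectors of the unimodular lattice $a(\vect{N})u(\phi(s))\Z^{k+1}$ lying in a fixed box, so \eqref{eq:100} is insoluble precisely when that lattice belongs to the set
\[K_\mu=\{\Lambda\in X\st \text{every nonzero }\vv\in\Lambda\text{ has }\abs{v_1}>\mu\text{ or }\max_{i\geq 2}\abs{v_i}\geq\mu\},\]
which is compact by Mahler's criterion and whose Haar measure $m_X(K_\mu)$ lies strictly in $(0,1)$.

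In this language, the theorem asserts that for Lebesgue-a.e.\ $s$ and every $\mu<1$ the orbit $\{a(\vect{N})u(\phi(s))\Gamma\st \vect{N}\in\cN\}$ meets $K_\mu$ for infinitely many $\vect{N}$. I would deduce this from the equidistribution statement: \emph{for every subinterval $I\subseteq[a,b]$ and every $f\in\Cc(X)$,
\[\frac{1}{\abs{I}}\int_I f\bigl(a(\vect{N})u(\phi(s))\Gamma\bigr)\,\dd s\ \longrightarrow\ \int_X f\,\dd m_X\qquad\text{as }\vect{N}\to\infty\text{ in }\cN,\]}
where $m_X$ is the $\SL(k+1,\R)$-invariant probability measure on $X$. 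Uniformity across $I$ upgrades this to weak convergence of push-forwards of every Lebesgue-absolutely-continuous probability measure on $[a,b]$; if the theorem fails, applying Lusin--Egorov to a hypothetical positive-measure ``bad'' set $B$ produces a sequence of push-forward measures supported in $X\setminus K_\mu$ whose weak-$*$ limit must be $m_X$, contradicting $m_X(K_\mu)>0$.

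To prove the equidistribution, I would first invoke the Dani--Margulis non-divergence estimates for unipotent flows to rule out escape of mass, so any weak-$*$ limit $\nu$ of the averaged orbital measures is a probability measure on $X$. Expanding $\phi$ by Taylor's theorem about a point and reparametrising $s$ produces, in the limit, nontrivial unipotent elements of $\SL(k+1,\R)$ (arising as $\vect{N}\to\infty$ limits of conjugates $a(\vect{N})u(\phi(s+\epsilon))u(\phi(s))\inv a(\vect{N})\inv$) under which $\nu$ must be invariant. Ratner's measure-classification theorem then expresses $\nu$ as a convex combination of algebraic measures on closed orbits $Hx$ of proper closed subgroups $H\subsetneq\SL(k+1,\R)$, and the Dani--Margulis linearisation method converts the assertion ``$\nu$ is supported on such an orbit'' into the statement that $\phi$ concentrates on a proper algebraic subvariety of $\R^{k+1}$ or of some exterior power $\bigwedge^r\R^{k+1}$. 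In the balanced regime where the coordinates of $\vect{N}$ stay in bounded ratios, this algebraic condition reduces to affine non-degeneracy of $\phi$, exactly as in \cite{Shah:SLn}.

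The main obstacle, and the origin of the new observation announced in the abstract, is the unbalanced regime in which some ratios $N_i/N_j$ are unbounded along $\cN$. Then $a(\vect{N})$ stretches different subblocks of $\SL(k+1,\R)$ at incomparable rates, and the limiting unipotent invariance of $\nu$ is generated by several different $\SL(m,\R)$-subgroups of $\SL(k+1,\R)$ (one for each cluster of coordinates growing at a common rate) acting on $\R^{k+1}$ and its exterior powers in intertwined, rather than independent, ways. Any single such $\SL(m,\R)$ alone might permit $\nu$ to concentrate on a proper orbit; the decisive step is to show that the joint rigidity of these intertwined subactions forces every algebraic subvariety they simultaneously preserve and which contains $\phi([a,b])$ to arise from a proper affine subspace of $\R^k$, contradicting the hypothesis on $\phi$. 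Classifying these joint invariants and translating the resulting condition back to affine non-degeneracy of $\phi$ is the technical step I expect to be the hardest.
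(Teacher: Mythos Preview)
Your overall architecture---Dani correspondence, nondivergence, unipotent invariance of limits, Ratner plus linearisation, and the need to analyse several $\SL(m,\R)$-subactions when the $N_i$ grow at different rates---matches the paper's. But there is a genuine gap in the equidistribution target.

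You assert that the push-forward measures converge to the full Haar measure $m_X$ on $X=\SL(k+1,\R)/\SL(k+1,\Z)$. This is false in general. The set $\cN$ is an arbitrary infinite subset of $(\Z_+)^k$ and may have \emph{bounded} projections on some coordinates: e.g.\ $N_k=5$ for every $\vect{N}\in\cN$. After permuting coordinates and passing to a subsequence, one has $N_{i,1}\ge\cdots\ge N_{i,k}$ with $N_{i,m_1}\to\infty$ but $N_{i,j}=N_{0,j}$ fixed for $j>m_1$. When $m_1<k$, the diagonal elements $a(\vect{N})$ stay in a fixed coset of the proper subgroup $Q_{m_1+1}$ (the semidirect product of $\SL(m_1+1,\R)$ with the relevant unipotent block), and the limit measure in Theorem~\ref{thm:main-action} is the $Q_{m_1+1}$-invariant probability on the closed orbit $Q_{m_1+1}x_0$, translated by $a_{\vtau_0}=\diag(1,\dots,1,N_{0,m_1+1}^{-1},\dots,N_{0,k}^{-1})$---not $m_X$. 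Your ``unbalanced regime'' paragraph addresses the separate difficulty of different growth rates among the \emph{unbounded} coordinates, but does not cover this.

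Once the limit sits on a proper orbit, your deduction via $m_X(K_\mu)>0$ breaks down: you need instead that $a_{\vtau_0}Q_{m_1+1}x_0$ meets $K_\mu$ in a set of positive $\mu_H$-measure. The paper secures this by an extra arithmetic input (Theorem~\ref{thm:intersect} and Proposition~\ref{prop:tau-Q}), which constructs an explicit element of $a_{\vtau_0}Q_{m_1+1}\cap N^-\Gamma$ and then invokes the Minkowski--Haj\'os description of $K_1$ to conclude $a_{\vtau_0}Q_{m_1+1}x_0\cap K_1\neq\emptyset$; openness of $K_\mu$ around $K_1$ then gives positive measure. This step genuinely uses that the bounded coordinates $N_{0,j}$ are \emph{integers}---indeed, the paper's Theorem~\ref{thm:counter} shows the conclusion fails if one allows non-integral limits. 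Your proposal has no analogue of this ingredient.
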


  This theorem can be reformulated in terms of dynamics of flows on
  the homogeneous space $\SL(n,\R)/\SL(n,\Z)$;
  cf.~\cite[\S2.1]{Kleinbock+Weiss:Dirichlet}. We need to prove that
  certain sequence of expanding translates a curve on this space tend
  to become uniformly distributed. To adept the strategy of
  \cite{Shah:SLn} for the general $\cN$, we will need to overcome
  significant technical difficulties, whose resolution will require
  making new observations and developing much sharper methods.

  \subsection{Asymptotic equidistribution of translated curves}
  \label{subsec:notation1}
  Let $n\geq 2$ and $G=\SL(n,\R)$. For
  $\vtau=(\tau_1,\dots,\tau_{n-1})\in\R^{n-1}$ and
  $\vxi=(\xi_1,\dots,\xi_{n-1})\in \R^{n-1}$, define
  \begin{equation}
    \label{eq:not1}
    a_\vtau={\left[\begin{smallmatrix} e^{(\tau_1+\dots+\tau_{n-1})}\\ &
          e^{-\tau_1}\\ & & \ddots \\ & & & e^{-\tau_{n-1}}
        \end{smallmatrix}
      \right]}\quad \text{and} \quad
    u(\vxi)={\left[\begin{smallmatrix} 1 & \xi_1&\dots&\xi_{n-1}\\ & 1
          \\ & & \ddots \\ &&& 1
        \end{smallmatrix}
      \right]}.
  \end{equation}

  Let $\sT=\{\vtau_i\}_{i\in\N}\subset R^{n-1}$ be a sequence such
  that if $\vtau_i=(\tau_{i,1},\dots,\tau_{i,n-1})$ then
  $\tau_{i,1}\geq \tau_{i,2}\geq \dots\geq \tau_{i,n-1}\geq 0$, and
  $\norm{\vtau_i}\toinfty\infty$. After passing to a subsequence we
  further assume that there exists $1\leq m_1\leq n-1$ such that
  $\tau_{i,m_1}\toinfty\infty$ and
  $\lim_{i\to\infty}\tau_{i,r}=\tau(r)<\infty$ for $m_1<r\leq n-1$.

  For $2\leq m\leq n$, define
  \begin{equation}
    \label{eq:1}
    Q_m=\left\{\left[\begin{smallmatrix} g & \vw \\ \vzero & I_{n-m}
        \end{smallmatrix}\right]\in G: g\in \SL(m,\R),\,\vw\in {\rm
        M}_{m\times (n-m)}(\R)\right\},
  \end{equation}
  where $\vzero$ is the $(n-m)\times m$-zero matrix and $I_{n-m}$ is
  the $(n-m)\times (n-m)$-identity matrix. 

  The main goal of this article is to prove the following:

  \begin{theorem}
    \label{thm:main-action}
    Let $\phi:I=[a,b]\to\R^{n-1}$ be an analytic map whose image is
    not contained in a proper affine subspace. Let $L$ be a Lie group
    and $\Lambda$ a lattice in $L$. Let $\rho:G\to L$ be a continuous
    homomorphism. Let $x_0\in L/\Lambda$ and $H$ be a minimal closed
    subgroup of $L$ containing $\rho(Q_{m_1+1})$ such that the orbit
    $Hx_0$ is closed and admits a unique $H$-invariant probability
    measure, say $\mu_H$. Then for any bounded continuous function $f$
    on $L/\Lambda$ the following holds:
    \begin{equation}
      \label{eq:9L}
      \lim_{i\to\infty} \frac{1}{\abs{b-a}}\int_a^b
      f(\rho(a_{\vtau_i}u(\phi(s)))x_0)\,ds = 
      \int_{Hx_0} f(\rho(a_{\tau_0})x_0)\,d\mu_H(x),
    \end{equation}
    where $\vtau_0=(0,\dots,0,\tau(m_1+1),\dots,\tau(n-1))$.
  \end{theorem}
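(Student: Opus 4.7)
The plan is to adapt the measure-classification strategy of \cite{Shah:SLn} to sequences $\{\vtau_i\}$ with unequal components. Let $\mu_i$ be the pushforward of $\abs{b-a}^{-1}\,ds$ under $s\mapsto\rho(a_{\vtau_i}u(\phi(s)))x_0$. The aim is to show that every weak-$*$ subsequential limit $\mu$ of $\{\mu_i\}$ coincides with $(\rho(a_{\vtau_0}))_*\mu_H$, which, integrated against $f$, is exactly \eqref{eq:9L}. Two preliminary steps are required. First, a Kleinbock--Margulis-style quantitative non-divergence estimate for polynomial curves shows that $\{\mu_i\}$ is tight on $L/\Lambda$, so that $\mu$ is a probability measure. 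The non-containment of $\phi(I)$ in any proper affine subspace provides the non-degeneracy needed to make the relevant lattice functions along $s\mapsto a_{\vtau_i}u(\phi(s))\Z^n$ uniformly $(C,\alpha)$-good on $I$.

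The second preliminary step produces unipotent invariance. A direct computation gives
\[
a_{\vtau_i}u(\phi(s))a_{\vtau_i}^{-1} = u(\vxi_i(s)),\qquad \xi_{i,j}(s)=\exp\bigl(\tau_{i,1}+\cdots+\tau_{i,n-1}+\tau_{i,j}\bigr)\phi_j(s),
\]
so that for $1\le j\le m_1$ the coordinate $\xi_{i,j}(s)$ diverges. Rewriting $a_{\vtau_i}u(\phi(s))=u(\vxi_i(s))a_{\vtau_i}$ and applying a mean-value/change-of-variable argument along the curve $s\mapsto u(\vxi_i(s))$, one obtains invariance of $\mu$ under a nontrivial one-parameter unipotent subgroup $U\subset\rho(Q_{m_1+1})$. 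Ratner's measure classification then decomposes $\mu$ into algebraic measures supported on closed orbits of closed connected subgroups $F\supseteq U$ with $F\cap\Stab(x)$ a lattice in $F$.

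The main obstacle, and the place where substantively new ideas are needed, is ruling out concentration of $\mu$ on orbits $Fx$ for subgroups $F$ strictly smaller than $H$. The standard tool is Dani--Margulis linearization: any such concentration would trap the translates $a_{\vtau_i}u(\phi(s))x_0$, on a set of positive Lebesgue measure in $[a,b]$, inside thin tubes around a proper algebraic subvariety in an appropriate representation space of $G$. The new input promised in the abstract, namely the intertwined linear dynamics of the nested copies $\SL(2,\R)\subset\SL(3,\R)\subset\cdots\subset\SL(n,\R)$ associated with the filtration $Q_2\subset Q_3\subset\cdots\subset Q_n=G$, is used to track how the different expansion rates $\tau_{i,j}$ act simultaneously on the various $\SL(m,\R)$-blocks, and how the bounded coordinates (those with $\tau_{i,r}\to\tau(r)<\infty$) still leave visible traces in the representation. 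Combined with the analyticity and non-degeneracy of $\phi$, this forces any algebraic constraint allegedly satisfied by the image of $\phi$ on a positive-measure set to be trivial, ruling out every proper $F$. Hence $\mu$ is fully $H$-invariant; by the minimality of $H$ and the uniqueness of $\mu_H$, one obtains $\mu=(\rho(a_{\vtau_0}))_*\mu_H$, where the translation by $\rho(a_{\vtau_0})$ absorbs the residual finite limits $\tau(r)$ for $m_1<r\le n-1$, yielding \eqref{eq:9L}.
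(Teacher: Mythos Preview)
Your outline matches the paper's overall architecture (nondivergence, unipotent invariance, Ratner, linearization), but two of the steps hide real content that your sketch does not supply.

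First, the unipotent-invariance step does not follow from the rewriting $a_{\vtau_i}u(\phi(s))=u(\vxi_i(s))a_{\vtau_i}$ plus a ``mean-value/change-of-variable argument.'' The tangent direction of the fast curve $s\mapsto\vxi_i(s)$ varies with $s$, and the different coordinates expand at genuinely different rates $e^{\tau_{i,j}}$, so there is no fixed one-parameter unipotent $U$ along which a naive reparametrization works. The paper handles this by passing to the \emph{smallest} scale $m_k$ (not $m_1$): it introduces an analytic twist $z(s)$ in the centralizer of the diagonal, chosen so that $z(s)\cdot q(\dot\phi(s))=w_0$ is constant (where $q$ is projection onto the first $m_k$ coordinates), and then proves that the twisted measures $\lambda_i$ (defined with $z(s)a_iu(\phi(s))$ in place of $a_iu(\phi(s))$) have a limit $\lambda$ invariant under $W=\{u(sw_0)\}$. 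Only after $\lambda$ is shown to be $L$-invariant does one undo the twist to recover $\mu$. Without this device your step~2 is a gap.

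Second, your description of the linear dynamics is off-target. The relevant structure is not the filtration $Q_2\subset Q_3\subset\cdots\subset Q_n$, but the \emph{layered decomposition} of the sequence $\{\vtau_i\}$: after passing to a subsequence one extracts indices $n-1\ge m_1>m_2>\cdots>m_k\ge 1$ recording the distinct growth scales among the $\tau_{i,j}$, and works with the commuting elements $\cA_\ell=m_\ell E_{1,1}-\sum_{j\le m_\ell+1}E_{j,j}$. The new ``Basic Lemma-II'' is an inductive statement reducing $k$ to $k-1$ via the Positivity Lemma (that inside any irreducible $(\la{c}+\la{h})$-submodule the ordering of weights with respect to $\cT$ agrees with the ordering of their $k$-th coordinates), ultimately yielding: if $u(\phi(s))v\in V^0(\cT)\oplus V^-(\cT)$ for all $s$, then $v$ is fixed by $Q_{m_1+1}$. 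This same linear-dynamics corollary is already needed for the nondivergence step in the general $L/\Lambda$ setting (not just for the linearization step), so your appeal to raw $(C,\alpha)$-goodness is insufficient there as well.
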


  Note that $\rho(Q_{m_1+1})$ is generated by $\Ad$-unipotent
  one-parameter subgroups of $L$. Hence by Ratner's
  theorem~\cite{R:uniform} $Hx_0$ is the closure of the
  $\rho(Q_{m_1+1})$-orbit of $x_0$.

  The above result in the case when
  $\vtau_i=(\tau_i,\dots,\tau_i)\in\R^{n-1}$ for a sequence
  $\tau_i\toinfty \infty$ was proved in \cite{Shah:SLn}. We will
  generalize that proof to obtain the above result. The main new
  contribution here is a strong general result about dynamics of
  intertwined linear actions of various $\SL(m,\R)$'s contained in
  $G$. Along with new interesting observations, its proof crucially
  uses the `Basic lemma' from \cite{Shah:SLn} on joint linear dynamics
  of various $\SL(2,\R)$'s cotained in $\SL(n,\R)$.

  For the basic application of the theorem we will put $L=G$,
  $\Lambda=\SL(n,\Z)$, $\rho$ the identity matrix, and
  $x_0=\SL(n,\Z)$. Then $H=Q_{m_1+1}$, because
  $Q_{m_1+1}\cap\SL(n,\Z)$ is a lattice in $Q_{m_1+1}$.

  For more examples, let $\sigma$ be an involutive automorphism
  of $\SL(n,\R)$ defined by
\begin{equation}
    \label{eq:30}
    \sigma(g):=\sw (\trn{g\inv}) \sw\inv,\quad\forall g\in \SL(n,\R),
  \end{equation}
where $\sw\in\GL(n,\R)$ permutes the standard basis
$\{e_1,\dots,e_n\}$ of $\R^n$ such that 
  \begin{equation}
    \label{eq:79}
    \sw(e_i)=e_{n+1-i},\quad \forall 1\leq i\leq n.
  \end{equation}
  
  Note that
  \begin{equation}
    \label{eq:31}
\begin{split}
    a'_{\vtau}&:=\sigma(a_\vtau)= \left[
      \begin{smallmatrix}
        e^{\tau_{n-1}}\\
        &\ddots \\
        &        & e^{\tau_{1}} \\
        & & & e^{-(\tau_1+\dots+\tau_{n-1})}
      \end{smallmatrix}
    \right],\\
    u'(\vxi)&:=\sigma(u(\vxi))= \left[
      \begin{smallmatrix}
        1 &      &      &\xi_{n-1}\\
        &\ddots&      &\vdots \\
        &      & 1    &\xi_{1}\\
        & & & 1
      \end{smallmatrix}
    \right] \quad\text{and}
  \end{split}
\end{equation}
\begin{equation*}
    Q'_m:=\sigma(Q_m)=\left\{ \left[
        \begin{smallmatrix}
          I_{n-m}              & \vw \\
          \vzero_{m\times (n-m)} & g
        \end{smallmatrix}
      \right]: g\in \SL(m,\R),\,\vw\in {\rm M}_{(n-m)\times
        m}(\R)\right\}.
  \end{equation*}
  
  Another example of Theorem~\ref{thm:main-action} is obtained as
  follows: Let $L=G\times G$ and define the homomorphism $\rho:G\to L$
  by
  \begin{equation}
    \label{eq:rho}
    \rho(g):=(g,\sigma(g)), \quad \forall g\in G.     
  \end{equation}
  Let $\Lambda=\SL(n,\Z)\times\SL(n,\Z)$. Then $\rho(\SL(n,\Z))\subset
  \Lambda$. Then $\rho(Q_{m_1+1})\cap\Lambda$ is a lattice in
  $\rho(Q_{m_1+1})$. Put $x_0=e\Lambda$. If we apply
  Theorem~\ref{thm:main-action} in this case then for its conclusion
  $H=\rho(Q_{m_1+1})$.

  \subsection{Some applications}

  Using the conclusion of the theorem in the above example we obtain the
  following result on non-improvability of Dirichlet's theorem on
  simultaneous Diophantine approximation in the dual form. 
  
  \begin{theorem}
    \label{thm:Dirichlet-multi-dual}
    Let $k\geq 1$, and $\phi:I=[a,b]\to \R^k$ be an analytic curve
    whose image is not contained in a proper affine subspace. Let
    $\cN$ be an infinite subset of $(\Z_+)^k$. Then for almost every
    $s\in I$ and any $\mu<1$, there exists infinitely many
    $(N_1,\dots,N_k)\in\cN$ such that both the following sets of
    inequalities are simultaneously insoluble:
    \begin{equation}
      \label{eq:32}
      \abs{q_1\phi(s)+\dots+q_{k}\phi(s)-p}\leq \mu(N_1\dots
      N_k)\inv,\quad
      \abs{q_i}\leq N_i \quad (1\leq i\leq k)
    \end{equation}
    for $(p,q_1,\dots,q_k)\in\Z^{k+1}\setminus\{\vzero\}$, and
    \begin{equation}
      \label{eq:36}
      \abs{q\phi_i(s)-p_i}\leq N_i\inv  \quad (1\leq i \leq k),\qquad
      \abs{q}\leq \mu N_1\dots N_k
    \end{equation}
    for $(q,p_1,\dots,p_k)\in\Z^{k+1}\setminus\{\vzero\}$.
  \end{theorem}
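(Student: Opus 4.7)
The plan is to deduce Theorem~\ref{thm:Dirichlet-multi-dual} from Theorem~\ref{thm:main-action} applied in the $L=G\times G$, $\rho(g)=(g,\sigma(g))$, $\Lambda=\SL(n,\Z)\times\SL(n,\Z)$, $x_0=e\Lambda$ example already introduced in the paper (with $n=k+1$, so that $H=\rho(Q_{m_1+1})$). A direct unpacking of \eqref{eq:not1}--\eqref{eq:31}, setting $N_i=e^{\tau_i}$, shows that \eqref{eq:32} has a nonzero integer solution iff the lattice $a_\vtau u(\phi(s))\Z^n$ meets the closed box $B_1=[-\mu,\mu]\times[-1,1]^k$, and \eqref{eq:36} has a nonzero integer solution iff $\sigma(a_\vtau u(\phi(s)))\Z^n$ meets $B_2=[-1,1]^k\times[-\mu,\mu]$. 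Let $\cS_j\subset G$ denote the closed, right $\SL(n,\Z)$-invariant set of $g$'s whose lattice meets $B_j$; then simultaneous insolubility of \eqref{eq:32} and \eqref{eq:36} is exactly $\rho(a_\vtau u(\phi(s)))x_0\notin\cD(\mu)$, where $\cD(\mu)\subset L/\Lambda$ is the closed image of $\cS_1\times\cS_2$.

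Enumerate $\cN$ and permute the coordinates of $\R^k$ (which permutes the components of $\phi$ without affecting the affine non-degeneracy hypothesis), then pass to a subsequence so that $\vtau_i=(\log N^{(i)}_{\omega(1)},\dots,\log N^{(i)}_{\omega(k)})$ has non-increasing entries and satisfies the limit conditions of Theorem~\ref{thm:main-action} for some $1\leq m_1\leq k$. The theorem applied on any subinterval $[a',b']\subset[a,b]$---which inherits the analyticity and non-containment in a proper affine subspace---yields weak-$\ast$ convergence $\nu_i^{[a',b']}\to\nu_\infty:=(\rho(a_{\vtau_0}))_\ast\mu_H$, where $\nu_i^{[a',b']}$ denotes the pushforward of $\tfrac{1}{b'-a'}\mathbf{1}_{[a',b']}\,ds$ under $s\mapsto\rho(a_{\vtau_i}u(\phi(s)))x_0$. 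Suppose, for contradiction, that for some rational $\mu_0\in(0,1)$ the set $F=\{s\in[a,b]:\rho(a_{\vtau_i}u(\phi(s)))x_0\in\cD(\mu_0)\text{ for all large }i\}$ has positive measure. Egorov then supplies $F'\subset F$ with $|F'|>0$ and $i_0$ such that $\rho(a_{\vtau_i}u(\phi(s)))x_0\in\cD(\mu_0)$ for every $s\in F'$ and $i\geq i_0$. At a Lebesgue density point $s_0$ of $F'$ with shrinking intervals $[a'_m,b'_m]\downarrow\{s_0\}$, one has $\nu_i^{[a'_m,b'_m]}(\cD(\mu_0))\geq|F'\cap[a'_m,b'_m]|/(b'_m-a'_m)\to 1$ uniformly in $i\geq i_0$, and the Portmanteau theorem applied to the closed set $\cD(\mu_0)$ forces $\nu_\infty(\cD(\mu_0))=1$. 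A countable union over $\mu_0$ thus reduces the whole theorem to the following key claim.

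\emph{Key quantitative claim: for every $\mu<1$, $\nu_\infty(\cD(\mu))<1$.} Since $\mu_H$ has full support on the closed orbit $Hx_0$ and $\cD(\mu)$ is closed, this is equivalent to exhibiting an $h\in H$ with $\rho(a_{\vtau_0})hx_0\notin\cD(\mu)$. Because $\cD(\mu)$ is a product of $\cS_1$ and $\cS_2$ across the two factors, it suffices to find $q\in Q_{m_1+1}$ such that \emph{one} of the lattices $a_{\vtau_0}q\Z^n$, $\sigma(a_{\vtau_0}q)\Z^n$ has no nonzero vector in the corresponding box. The coordinate-$1$ expansion by $e^{\sum_r\tau(r)}\geq 1$ under $a_{\vtau_0}$, combined with the off-diagonal $w$-block of $Q_{m_1+1}$ (see \eqref{eq:1})---which couples the expanding ``top'' block to the contracted basis vectors $e_{m_1+2},\dots,e_n$---provides exactly the degrees of freedom needed for a geometry-of-numbers construction: since $B_1$ has volume $2^n\mu<2^n$, it is not a Minkowski body, and one can tune the diagonal of the $(m_1+1)\times(m_1+1)$ block together with the entries of $w$ to simultaneously push every nonzero integer combination out of $B_1$. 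This is the only delicate step and constitutes the main obstacle; its subtlety stems from the asymmetric scaling of $a_{\vtau_0}$ across the $Q_{m_1+1}$-block and its complement, which entangles the constraints on different basis vectors through shared coordinates, but the unipotent $w$-block is precisely what breaks the obstruction.
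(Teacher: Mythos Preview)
Your overall architecture---translate to $L/\Lambda$ via $\rho(g)=(g,\sigma(g))$, apply Theorem~\ref{thm:main-action} on subintervals, then run a Lebesgue-density/Portmanteau argument reducing to a single statement about the limit orbit---is exactly the paper's route. Two problems remain, one a slip and one a real gap.

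\textbf{The set $\cD(\mu)$ is misdefined.} You set $\cD(\mu)$ equal to the image of $\cS_1\times\cS_2$, i.e.\ the locus where \emph{both} systems are soluble; then ``$\notin\cD(\mu)$'' only says \emph{at least one} of \eqref{eq:32}, \eqref{eq:36} is insoluble, whereas the theorem demands \emph{both} be insoluble. The correct bad set is the union $(\cS_1/\Gamma\times G/\Gamma)\cup(G/\Gamma\times\cS_2/\Gamma)$, and its complement is $K_\mu\times K_\mu$ in the paper's notation. With your definition, your ``key claim'' asks only that \emph{one} factor avoid its box---strictly easier than what is needed, and your reduction ``it suffices to find $q$ such that \emph{one} of the lattices \ldots'' exploits precisely this error.

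\textbf{The key claim is asserted, not proved.} Even granting the corrected target---find $q\in Q_{m_1+1}$ with $a_{\vtau_0}q\Z^n$ \emph{and} $\sigma(a_{\vtau_0}q)\Z^n$ both avoiding their boxes---your ``volume $2^n\mu<2^n$, so one can tune the diagonal and $w$-block'' is not an argument. The converse of Minkowski's theorem gives you \emph{some} lattice avoiding a sub-critical body, but here the lattice must lie on the specific low-dimensional orbit $a_{\vtau_0}Q_{m_1+1}\Z^n$, and simultaneously its $\sigma$-image must lie on $\sigma(a_{\vtau_0}Q_{m_1+1})\Z^n$; nothing you wrote touches either constraint. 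The paper resolves this via Theorem~\ref{thm:intersect}: it invokes the Minkowski--Haj\'os description $K_1=\bigcup_{w\in\cW_n}(wNw^{-1})\Z^n$ (Theorem~\ref{thm:K1}), then exhibits an explicit integer matrix $\gamma\in\SL(n,\Z)$ (Proposition~\ref{prop:tau-Q}, equation~\eqref{eq:83}) showing $a_{\vtau_0}Q_1\Gamma\cap N^-\Gamma\neq\emptyset$. The point that handles \emph{both} factors at once is structural: $\sigma(N^-)=N^-$ and $\sigma(\Gamma)=\Gamma$, so a single element of $N^-\Gamma$ lands in $K_1\times K_1$ under $\rho$. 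Your heuristic about the $w$-block supplying ``degrees of freedom'' does not substitute for this construction.
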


  Above statement is stronger than
  Theorem~\ref{thm:main-Dirichlet}. It also generalizes
  \cite[Theorem~1.4]{Shah:SLn}, which considered the case of $\cN$
  where $N_1=N_2=\dots=N_k$.

  \begin{corollary}
    \label{cor:manifold} 
    Let $\cN\subset(\Z_+)^n$ be an infinite set. Let $M$ be a
    connected Riemannian analytic submanifold of $\R^k$ which is not
    contained in a proper affine subspace of $\R^k$. Then with respect
    to the measure class on $M$ associated to the Riemannian volume
    form, for almost every $\vxi\in M$, the DT cannot be improved for
    $\vxi$ along $\cN$.

    In fact, the conclusion of Theorem~\ref{thm:Dirichlet-multi-dual}
    holds for almost all $\vxi\in M$ in place of $\phi(s)$.
  \end{corollary}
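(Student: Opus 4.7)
The plan is to derive the corollary from Theorem~\ref{thm:Dirichlet-multi-dual} via a local foliation of $M$ by non-degenerate analytic curves, together with Fubini's theorem. Since the stated conclusion is a pointwise property of $\vxi$, and the Riemannian measure class on $M$ is locally equivalent to the pullback of Lebesgue measure under any analytic chart, it suffices to verify the conclusion in a coordinate neighborhood of each $p_0\in M$. Fix $p_0$ and choose an analytic diffeomorphism $\Psi\colon U\to V$ from an open $U\subset\R^d$ (with $d=\dim M$) onto a neighborhood $V$ of $p_0$ in $M$; after shrinking $U$ we may assume $\Psi(U)$ is not contained in any proper affine subspace of $\R^k$, equivalently (by analyticity and Taylor expansion) $\{\partial^\alpha\Psi(u):|\alpha|\geq 1\}$ spans $\R^k$ at every $u\in U$.

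Next I would construct an analytic one-dimensional foliation of $U$ whose leaves map under $\Psi$ to generically non-degenerate curves in $\R^k$. A convenient choice is the Veronese-type diffeomorphism
\[
\Phi\colon W\times J\to U,\qquad \Phi(w,s)=(s,\,w_1+s^{a_1},\,\ldots,\,w_{d-1}+s^{a_{d-1}}),
\]
with $W\subset\R^{d-1}$, $J\subset\R$ open and integer exponents $2\leq a_1<\cdots<a_{d-1}$ to be specified ($\Phi$ has Jacobian determinant $1$). For each $w\in W$ set $\phi_w(s)=\Psi(\Phi(w,s))$; Taylor expansion at $s=0$ gives
\[
\phi_w(s)-\Psi(\Phi(w,0)) = \sum_{|\alpha|\geq 1} \frac{1}{\alpha!}\, s^{\langle b,\alpha\rangle}\, \partial^\alpha\Psi(\Phi(w,0)),\qquad b=(1,a_1,\ldots,a_{d-1}),
\]
where $\langle b,\alpha\rangle=\alpha_1+a_1\alpha_2+\cdots+a_{d-1}\alpha_d$, so the coefficient of $s^n$ is an explicit linear combination of partial derivatives of $\Psi$ at $\Phi(w,0)$. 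The key claim is that for a suitably generic choice of exponents, the exceptional set
\[
Z=\{w\in W : \phi_w(J)\text{ lies in a proper affine subspace of }\R^k\}
\]
is a proper analytic subvariety of $W$, hence Lebesgue-null. Granting this, Theorem~\ref{thm:Dirichlet-multi-dual} applies to $\phi_w$ for each $w\in W\setminus Z$; Fubini on $W\times J$, combined with the fact that $\Psi\circ\Phi$ is an analytic diffeomorphism onto an open subset of $M$, transfers the stated simultaneous-insolubility conclusion to almost every $\vxi\in V$ with respect to the Riemannian measure class.

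The principal obstacle is verifying the key claim. To do so, using non-degeneracy of $\Psi(U)$ one picks $k$ multi-indices $\alpha^{(1)},\ldots,\alpha^{(k)}$ with $\partial^{\alpha^{(j)}}\Psi(u_0)$ linearly independent in $\R^k$, and then chooses $(a_1,\ldots,a_{d-1})$ so that the weights $\langle b,\alpha^{(j)}\rangle$ are pairwise distinct and distinct from $\langle b,\alpha'\rangle$ for all other $\alpha'$ in a relevant finite truncation $|\alpha'|\leq N$; the Taylor coefficient of $\phi_{w_0}$ at $s^{\langle b,\alpha^{(j)}\rangle}$ then singles out $\partial^{\alpha^{(j)}}\Psi(u_0)$ up to a non-zero scalar, yielding $k$ linearly independent Taylor coefficients. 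Hence $w_0\notin Z$, so $Z$ is a proper analytic subset of $W$ and the Lebesgue-null conclusion follows from analyticity. The combinatorial step of choosing the exponents generically (depending on the jets of $\Psi$ at $u_0$) is delicate but elementary; once done, the dynamical input from Theorem~\ref{thm:Dirichlet-multi-dual} and Fubini conclude the argument.
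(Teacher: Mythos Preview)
Your strategy---localize to a chart, foliate by analytic curves, apply Theorem~\ref{thm:Dirichlet-multi-dual} leafwise, and invoke Fubini---is exactly the paper's approach; the paper's proof simply asserts that ``$M$ can be measurably fibered by analytic curves such that almost every curve in the fiber is not contained in a proper affine subspace'' and then applies Fubini, with no further justification. You go further and attempt to supply a proof of this fibration statement, which the paper omits.

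Your combinatorial step, however, does not work as written. Because the first coordinate of $\Phi(w,s)$ is $s$, the weight vector is $b=(1,a_1,\dots,a_{d-1})$ with $b_1=1$; for \emph{every} $n$ the multi-index $(n,0,\dots,0)$ solves $\langle b,\alpha'\rangle=n$, so $\alpha^{(j)}$ can be the unique solution only when $\alpha^{(j)}$ is itself of the form $(n_j,0,\dots,0)$. Hence the Taylor coefficient at degree $n_j$ always contains the additional term $\tfrac{1}{n_j!}\partial_1^{n_j}\Psi(u_0)$, and your truncation $|\alpha'|\le N$ cannot exclude it: either $N\ge n_j$ and then your distinctness requirement is unsatisfiable, or $N<n_j$ and then $(n_j,0,\dots,0)$ lies outside the truncation yet still contributes to the coefficient. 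Concretely, with $d=2$, $k=3$, $\Psi(u_1,u_2)=(u_1,u_1^2,u_2)$ (whose image is not in a proper affine subspace) and $a_1=2$, one gets $\phi_w(s)=(s,s^2,w+s^2)$, which lies in the plane $y_3-y_2=w$ for \emph{every} $w$; the collision is between $(0,1)$ and $(2,0)$ at weight $2$. The fibration lemma is nonetheless true. One clean route is to first exhibit a single non-degenerate analytic arc in $U$---for example an arc through $k+1$ points of a small ball whose $\Psi$-images are affinely independent (such points exist in any open subset by analyticity and connectedness)---then foliate a neighborhood of that arc by translates, so the degeneracy locus in the transversal parameter is a proper analytic set, and finally cover $U$ by countably many such neighborhoods.
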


  It is interesting in the above results that we can take $\cN$ with
  bounded projections on some of the coordinates. In this case such
  results were not known earlier even for almost all points of $\R^k$.
  For Theorem~\ref{thm:main-Dirichlet}, it is essential that the
  limits $N_{0,j}$ for $m_1<j\leq n-1$ are integral:

\begin{theorem}
  \label{thm:counter}
  Let $\cN\subset\R^k$ be an unbounded sequence such that one of the
  coordinate converges to a non-integral real value. Then there exist
  $0<\mu<1$ such that for all but finitely many $(N_1,\dots, N_k)\in\cN$ and
  every $\vxi\in\R^k$, the system of inequalities \eqref{eq:100} admit
  nonzero integral solutions.
\end{theorem}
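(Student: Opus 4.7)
The plan is to derive Theorem~\ref{thm:counter} directly from Minkowski's theorem on linear forms (stated at the beginning of the introduction), by choosing bounds so that the entire volume loss incurred when shrinking to $\mu<1$ is absorbed into the single coordinate whose limit is non-integral.

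Passing to a subsequence, I may assume that for some $j$ the $j$-th coordinate satisfies $N_{i,j}\to N_0\in\R\setminus\Z$; since \eqref{eq:100} is vacuously insoluble when $N_j\leq 0$, I focus on $N_0>0$. Because $N_0$ lies strictly between two consecutive integers, both ratios
\begin{equation*}
  \frac{\lfloor N_0\rfloor}{N_0}<1 \qquad\text{and}\qquad \frac{N_0}{\lfloor N_0\rfloor+1}<1,
\end{equation*}
so I may fix a single $\mu\in(0,1)$ close enough to $1$ that simultaneously $\mu>\lfloor N_0\rfloor/N_0$ and $\mu^k>N_0/(\lfloor N_0\rfloor+1)$.

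Given any $(N_1,\dots,N_k)\in\cN$ and any $\vxi=(\xi_1,\dots,\xi_k)\in\R^k$, I apply Minkowski's theorem to the linear forms $L_1=p-q_1\xi_1-\dots-q_k\xi_k$ and $L_{i+1}=q_i$ in the variables $(p,q_1,\dots,q_k)$---the coefficient matrix is upper triangular with $1$'s on the diagonal, so has determinant $1$---with bounds
\begin{equation*}
  \alpha_1=\mu(N_1\cdots N_k)^{-1},\quad \alpha_{i+1}=\mu N_i\ \ (i\neq j),\quad \alpha_{j+1}=N_j/\mu^k.
\end{equation*}
Direct multiplication gives $\alpha_1\cdots\alpha_{k+1}=1$, so Minkowski yields a nonzero $(p,q_1,\dots,q_k)\in\Z^{k+1}$ meeting all these inequalities. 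The first bound and the bounds for $i\neq j$ coincide with those of \eqref{eq:100}; only the weaker bound $|q_j|<N_j/\mu^k$ needs to be strengthened to $|q_j|<\mu N_j$.

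For all but finitely many terms of $\cN$, the entry $N_j$ is close enough to $N_0$ that $\lfloor N_j\rfloor=\lfloor N_0\rfloor$, that $N_j/\mu^k<\lfloor N_0\rfloor+1$ (by the second condition on $\mu$), and that $\mu N_j>\lfloor N_0\rfloor$ (by the first). Integrality of $q_j$ combined with $|q_j|<N_j/\mu^k<\lfloor N_0\rfloor+1$ then forces $|q_j|\leq\lfloor N_0\rfloor<\mu N_j$, completing \eqref{eq:100}. No substantive obstacle is anticipated: the content of the argument is the single observation that a non-integer bound on an integer variable carries an amount of free slack that is bounded away from zero, and this is exactly what is required to absorb the factor $\mu^{k+1}<1$ lost from the Minkowski volume into the one coordinate where such slack is available.
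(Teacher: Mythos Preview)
Your argument is correct (with one small caveat below) and is genuinely different from---and considerably more elementary than---the paper's proof. The paper reformulates the question in homogeneous-dynamics language: it passes to the space of unimodular lattices, reduces to showing that $a_{\vtau_0}Q_{m_1+1}x_0\cap K_1=\emptyset$, and then invokes the \emph{Minkowski--Haj\'os theorem} (Theorem~\ref{thm:K1}) characterising $K_1$ as a union of unipotent orbits, deriving a contradiction from the assumption $N_{0,k}\notin\Z$. That characterisation is a deep result, conjectured by Minkowski and only proved in 1941. By contrast, you use nothing beyond Minkowski's linear-forms theorem (equivalently, the convex body theorem) and the trivial observation that a strict bound on an integer coordinate can be tightened to the nearest integer; the non-integrality of $N_0$ furnishes exactly enough slack to compensate for the volume deficit $\mu^{k+1}$. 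Your route is shorter, avoids Haj\'os entirely, and makes transparent why only the single non-integral coordinate matters. The paper's route has the advantage of sitting naturally inside its dynamical framework and of exercising the $K_1$-description it has already quoted for other purposes.

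One caveat: your sentence ``since \eqref{eq:100} is vacuously insoluble when $N_j\leq 0$, I focus on $N_0>0$'' is logically inverted---if the system were insoluble the theorem would \emph{fail}, not hold vacuously. The intended reading of the theorem (as in the paper's own proof) is that the $N_{i,j}$ are positive reals; you should simply state this as a standing hypothesis rather than try to dispose of $N_0\leq 0$.
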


This fact, which is a consequence of Minkowski-Haj\'{o}s Theorem,
corroborates the counter examples given in
\cite[\S4.4]{Kleinbock+Weiss:Dirichlet} and answers a question raised
there.

\subsection{Non-improvability of Minkowski's theorem along certain
  $\cN$}

\begin{theorem}
  \label{thm:Minkowski-central}
  Let $\phi=(\phi_{ij}):I=[a,b]\to \SL(n,\R)$ be an analytic map such
  that $\R$-$\Span\{\phi_{1,j}(s):s\in I\}=\R^n$. Let $\sN$ be an
  infinite set of positive integers. Then for almost every $s\in I$,
  there exists an infinite subset $\sN_s\subset\sN$ such that for any
  $\mu<1$ the following system of inequalities is insoluble for any
  $\alpha\in\sN_s$ and $(x_1,\dots, x_n)\in\Z^n\setminus\{0\}$:
  \begin{align}
    \label{eq:88-Ma}
\begin{split}
    \abs{\phi_{1,1} x_1 +\dots+\phi_{1,n} x_n}&\leq \mu\alpha^{-n}\\
    \abs{\phi_{i,1} x_1+\dots+\phi_{i,n} x_{i,n}}&\leq \mu
    \alpha,\quad (2\leq i\leq n).
  \end{split}
\end{align}
\end{theorem}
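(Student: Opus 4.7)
The plan is to reduce Theorem~\ref{thm:Minkowski-central} to an equidistribution statement on $G/\SL(n,\Z)$ supplied by Theorem~\ref{thm:main-action}. For $\alpha>0$ set $\tilde a_\alpha = \alpha^{-1/n}\diag(\alpha^n,\alpha^{-1},\ldots,\alpha^{-1}) \in \SL(n,\R)$; this equals $a_\vtau$ for $\vtau=((1{+}1/n)\log\alpha,\ldots,(1{+}1/n)\log\alpha)$ in the notation of \S\ref{subsec:notation1}. A direct check shows that the system (\ref{eq:88-Ma}) admits a nonzero integer solution iff the unimodular lattice $\tilde a_\alpha\phi(s)\SL(n,\Z)$ contains a nonzero vector of sup-norm at most $\mu\alpha^{-1/n}$. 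Writing $K_\epsilon \subset G/\SL(n,\Z)$ for the (compact, by Mahler's criterion) set of lattices whose shortest nonzero vector has sup-norm $\geq\epsilon$, insolubility of (\ref{eq:88-Ma}) for every $\mu<1$ is then equivalent to $\tilde a_\alpha\phi(s)\SL(n,\Z)\in K_\epsilon$ for some $\epsilon\geq\alpha^{-1/n}$.

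Every coordinate of $\vtau$ tends to $\infty$ as $\alpha\to\infty$ along $\sN$, so $m_1=n-1$ and $Q_{m_1+1}=Q_n=G$; with $L=G$, $\rho=\mathrm{id}$, $x_0=\SL(n,\Z)$, Theorem~\ref{thm:main-action} supplies equidistribution of the translated curves $a_{\vtau_i}u(\psi(\cdot))\SL(n,\Z)$ to Haar measure $\mu_G$ on $G/\SL(n,\Z)$, for any non-degenerate analytic $\psi\colon I\to\R^{n-1}$. To bring our $\phi$ into this form, apply the Bruhat decomposition with respect to the maximal parabolic stabilizing $\R e_1$: on the dense open set $J=\{s\in I:\phi_{11}(s)\neq 0\}$ (nonempty because the span hypothesis forces $\phi_{11}\not\equiv 0$), there is a unique analytic factorization $\phi(s)=u^-(s)\,z(s)\,u(\xi(s))$ with $u^-(s)$ having nonzero off-diagonal entries only in the first column, $z(s)=\diag(\phi_{11}(s),M(s))$ commuting with every $\tilde a_\alpha$, and $\xi_j(s)=\phi_{1,j+1}(s)/\phi_{11}(s)$. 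A first-row computation shows that the span hypothesis on $\phi$ is equivalent to non-degeneracy of $\xi$ on $J$, and analyticity forces this to persist on every open sub-interval of $J$ (a sub-interval on which $\xi$ was trapped in a proper affine subspace would, after clearing the zero of $\phi_{11}$, produce a global linear relation on $\phi_1$ by analytic continuation). Since $\tilde a_\alpha u^-(s)\tilde a_\alpha^{-1}\to e$ as $\alpha\to\infty$ ($U^-$ being contracting),
\begin{equation*}
\tilde a_{\alpha_i}\phi(s)\SL(n,\Z) = \eta_i(s)\cdot z(s)\cdot\tilde a_{\alpha_i}u(\xi(s))\SL(n,\Z), \qquad \eta_i(s)\to e.
\end{equation*}

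For bounded continuous $f$ on $G/\SL(n,\Z)$, a standard partition argument — covering each connected component of $J$ by short sub-intervals on which $z(\cdot)$ is nearly constant, applying Theorem~\ref{thm:main-action} on each piece, and invoking the left-invariance $\int f(z_0 x)\,d\mu_G(x)=\int f\,d\mu_G$ — yields
\begin{equation*}
\lim_{i\to\infty}\frac{1}{|I|}\int_I f\bigl(\tilde a_{\alpha_i}\phi(s)\SL(n,\Z)\bigr)\,ds = \int f\,d\mu_G.
\end{equation*}
Setting $E_{i,\epsilon}:=\{s:\tilde a_{\alpha_i}\phi(s)\SL(n,\Z)\in K_\epsilon\}$ and approximating $\chi_{K_\epsilon}$ by such $f$ gives $|E_{i,\epsilon}|\to|I|\mu_G(K_\epsilon)$, hence $|\limsup_i E_{i,\epsilon}|\geq|I|\mu_G(K_\epsilon)$ by Fatou. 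Choosing $\epsilon_k\to 0$ with $\mu_G(K_{\epsilon_k})\to 1$, almost every $s\in I$ lies in $\bigcup_k\limsup_i E_{i,\epsilon_k}$, so for such $s$ there exists $\epsilon>0$ such that $\sN_s:=\{\alpha\in\sN:\alpha\geq\epsilon^{-n},\ \tilde a_\alpha\phi(s)\SL(n,\Z)\in K_\epsilon\}$ is infinite. For any $\alpha\in\sN_s$ and $\mu<1$, the shortest vector of $\tilde a_\alpha\phi(s)\SL(n,\Z)$ is at least $\epsilon\geq\alpha^{-1/n}>\mu\alpha^{-1/n}$, confirming insolubility of (\ref{eq:88-Ma}). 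The main obstacle is making the partition-and-approximate argument precise for the varying $z(s)$-factor — requiring uniform control on the equidistribution and careful handling of the measure-zero complement $I\setminus J$ — but no dynamical input beyond Theorem~\ref{thm:main-action} is required.
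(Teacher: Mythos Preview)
Your approach is correct but differs from the paper's in one essential respect. The paper does \emph{not} apply Theorem~\ref{thm:main-action} to the $\SL(n,\R)$-valued curve $\phi$; instead it invokes \cite[Theorem~1.8]{Shah:SLn}, an external equidistribution result stated directly for analytic curves $\phi:I\to\SL(n,\R)$ satisfying the first-row span condition, in the ``central'' regime $\vtau_i=(\tau_i,\dots,\tau_i)$. With that result in hand the paper simply repeats the Lebesgue-density argument of Proposition~\ref{prop:E}, using that $K_\mu$ has positive Haar measure. Your route --- factoring $\phi(s)=u^-(s)\,z(s)\,u(\xi(s))$ via the parabolic Bruhat decomposition, checking that the span hypothesis on the first row of $\phi$ translates into affine non-degeneracy of $\xi$ on every subinterval of $\{\phi_{11}\neq 0\}$, contracting $u^-$, and handling the varying $z(s)$ by a partition argument plus left-invariance of $\mu_G$ --- is a legitimate reduction to Theorem~\ref{thm:main-action} of \emph{this} paper, and hence makes the proof self-contained here at the cost of the extra bookkeeping you flag (the blow-up of $z(s)$ near the finitely many zeros of $\phi_{11}$, and uniform control across the partition). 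Your Fatou/limsup endgame is also a clean alternative to the paper's Lebesgue-density step and, in the normalisation you chose to match the exponent $\alpha^{-n}$ in \eqref{eq:88-Ma}, it delivers the quantifier order stated in the theorem (a single $\sN_s$ working for all $\mu<1$) directly, since membership in a fixed $K_\epsilon$ eventually implies membership in $K_{\alpha^{-1/n}}$.
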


\begin{theorem}
  \label{thm:weak-Minkowski}
  Let $\cN$ be an infinite subset of $(\Z_+)^{n-1}$ with unbounded
  projection on every coordinate. Let $\phi$ be as in
  Theorem~\ref{thm:Minkowski-central}. We further assume that
  $\phi_{i1}(s)\equiv 0$ and $\phi_{ij}(s)\equiv \phi_{ij}$ are
  constant functions for all $i\geq 2$. Then for almost every $s\in
  I$, there exists an infinite subset $\cN_s\subset \cN$ such that for
  every $\mu<1$ the following system of inequalities is insoluble in
  for any $(N_1,\dots,N_{n-1})\in\cN_s$ and
  $(x_1,\dots,x_n)\in\Z^n\setminus\{0\}$:
  \begin{align}
    \label{eq:88-WM}
    \begin{split}
    \abs{\phi_{11}(s) x_1 +\dots+\phi_{1n}(s) x_n}
    &\leq \mu(N_1\dots N_{n-1})\inv\\
    \abs{\phi_{i2} x_2\dots+\phi_{in} x_n} 
    &\leq \mu N_{i-1} \quad (2\leq i\leq n).
  \end{split}
\end{align}
\end{theorem}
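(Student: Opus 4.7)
The plan is to reduce the claim via the Dani correspondence to a dynamical statement on $G/\SL(n,\Z)$, then apply Theorem~\ref{thm:main-action}. Setting $\vtau(N) = (\log N_1,\dots,\log N_{n-1})$, the system \eqref{eq:88-WM} is insoluble for every $\mu<1$ if and only if
\[
a_{\vtau(N)}\,\phi(s)\,\SL(n,\Z) \in K^\circ := \bigl\{g\,\SL(n,\Z) : g\Z^n \cap (-1,1)^n = \{0\}\bigr\},
\]
the closed subset of $G/\SL(n,\Z)$ consisting of unimodular lattices attaining Minkowski's bound for the open cube. Note that $K^\circ$ has Haar measure zero, and by the Minkowski--Haj\'os structural theorem it is a finite union of closed orbits of upper-triangular unipotent subgroups (up to coordinate permutations and sign changes). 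The theorem thus reduces to showing that for almost every $s\in I$, the set $\{N\in\cN : a_{\vtau(N)}\phi(s)\SL(n,\Z)\in K^\circ\}$ is infinite.

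Under the hypotheses $\phi_{i,1}(s)\equiv 0$ and $\phi_{ij}(s)\equiv\phi_{ij}$ constant for $i\geq 2$, expansion of $\det\phi(s)=1$ along the first column forces $\phi_{11}(s)$ to be a nonzero constant $c$. Writing $g'=(\phi_{ij})_{i,j\geq 2}$ and $g_0=\diag(c,g')$, I factor $\phi(s)=u(\vxi(s))\,g_0$ with $\vxi(s)=(\phi_{12}(s),\dots,\phi_{1n}(s))(g')^{-1}$; the span hypothesis on the first row of $\phi$ then makes $\vxi:I\to\R^{n-1}$ an analytic curve whose image is not contained in any proper affine subspace. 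Since $\cN$ has unbounded projection on every coordinate, I extract inductively a subsequence along which every coordinate of $\vtau(N)$ tends to $\infty$, and after a coordinate permutation (which conjugates $a_\vtau$ by a permutation matrix and permutes the coordinates of $\vxi$, preserving affine-genericity) together with a further thinning, arrange $\tau_{i,1}\geq\dots\geq\tau_{i,n-1}\to\infty$, so $m_1=n-1$ in the notation of Theorem~\ref{thm:main-action}.

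I then apply Theorem~\ref{thm:main-action} with $L=G$, $\Lambda=\SL(n,\Z)$, $\rho$ the identity, and $x_0=g_0\,\SL(n,\Z)$. Since $m_1=n-1$ gives $\rho(Q_{m_1+1})=Q_n=G$, we have $H=G$, $\vtau_0=\vzero$, and $\mu_H$ is the Haar probability measure on $G/\SL(n,\Z)$. The conclusion of the theorem provides weak-$*$ convergence of the pushforwards of normalized Lebesgue on $I$ under $s\mapsto a_{\vtau_i}\phi(s)\SL(n,\Z)$ to this Haar measure.

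The main obstacle is to upgrade this weak-$*$ equidistribution---which only gives $\mu_H(K^\circ)=0$ in the limit---to the pointwise almost everywhere orbit visits to $K^\circ$ infinitely often. My plan is to adapt the strategy of \cite{Shah:SLn}, where the case $\vtau_i=(\tau_i,\dots,\tau_i)$ was handled, by applying Theorem~\ref{thm:main-action} also to sub-subsequences of $\{\vtau_i\}$ where only a proper subset of the coordinates tend to $\infty$ while the remaining coordinates converge to integer limits $\tau(r)\in\log\Z_+$; such sub-subsequences exist because the integrality of $\cN\subset(\Z_+)^{n-1}$ forces bounded coordinates to take values in a discrete set. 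In each such regime $m_1<n-1$ and the limit measure $\mu_H$ is supported on a proper closed $H$-orbit; crucially, the integrality of the resulting $\vtau_0$, combined with the Minkowski--Haj\'os description of $K^\circ$ as closed orbits of unipotent subgroups, makes $\rho(a_{\vtau_0})Hx_0\cap K^\circ$ a set of positive $\mu_H$-measure. Pointwise a.e.\ visits along each regime, combined via a diagonal extraction across the various $m_1$'s, then produce the infinite set $\cN_s$ required by the theorem for almost every $s\in I$.
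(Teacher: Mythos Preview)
Your first three paragraphs are essentially the paper's proof: the factorization $\phi(s)=u(\vxi(s))g_0$, the observation that the spanning hypothesis on the first row forces $\vxi$ not to lie in a proper affine subspace of $\R^{n-1}$, the extraction of a subsequence of $\cN$ along which every coordinate tends to infinity (so that $m_1=n-1$ and $Q_{m_1+1}=G$), and the application of Theorem~\ref{thm:main-action} with $x_0=g_0\,\SL(n,\Z)$ to obtain equidistribution with respect to Haar measure --- all of this is exactly what the paper does.

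The gap is in your final paragraph. You aim at the measure-zero set $K^\circ=K_1$, and this creates an artificial obstacle. The paper instead fixes $\mu<1$ and works with $K_\mu=\{\Delta:\Delta\cap[-\mu,\mu]^n=\{0\}\}$, which has \emph{positive} Haar measure (it contains an open neighbourhood of $\Z^n$, since the first successive minimum is continuous). Equidistribution then gives, via the Lebesgue density argument of Proposition~\ref{prop:E}, that for almost every $s$ the trajectory $a_{\vtau_i}\phi(s)\Z^n$ lies in $K_\mu$ for infinitely many $i$. Intersecting over a sequence $\mu_j\nearrow 1$ handles all $\mu<1$ for a.e.\ $s$ simultaneously, and a diagonal extraction yields a single infinite $\cN_s$ that works for every $\mu<1$ (for all but finitely many of its elements, which is the intended content of the statement).

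Your proposed workaround is also internally inconsistent: once you have extracted a subsequence along which every coordinate of $\vtau_i$ tends to infinity, no further sub-subsequence can have a bounded coordinate, so the regimes with $m_1<n-1$ you want to invoke do not occur along this sequence. Moreover, even in the setting of Theorem~\ref{thm:intersect} where $m_1<n-1$, that theorem gives only that $K_1$ meets the translated $Q_{m_1+1}$-orbit; it is the \emph{open} set $K_\mu\supset K_1$ that therefore has positive $\mu_H$-measure, not $K_1$ itself. So replace the target $K^\circ$ by $K_\mu$ and drop the last paragraph; the rest of your argument then goes through as in the paper.
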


It is a question whether the conditions on $\phi_{i,j}(s)$ can be
removed for $i>2$.

\subsection{Uniform versions of the equidistribution statement}
Let the notation be as in \S\ref{subsec:notation1}.
\begin{theorem}
  \label{thm:main-uniform}
  Let $\phi:I\to\R^{n-1}$ be an analytic curve whose image is not
  contained in a proper affine subspace. Let $L$ be a Lie group and
  $\Gamma$ be a lattice in $L$. Let $\rho:G\to L$ be continuous
  homomorphism. Let $x_0\in L/\Lambda$ be such that
  $\rho(Q_{m_1+1})x_0$ is dense in $L/\Lambda$. Let $x_i\toinfty x_0$
  be a convergent sequence in $L/\Lambda$. Then for any bounded
  continuous function $f$ on $L/\Lambda$
  \begin{equation}
    \label{eq:9-uniform}
    \lim_{i\to\infty} \frac{1}{\abs{b-a}}\int_a^b
    f(\rho(a_{\vtau_i}u(\phi(s)))x_i)\,ds=
    \int_{L/\Lambda} f\,d\mu_L,
  \end{equation}
  where $\mu_L$ is the unique $L$-invariant probability measure on
  $L/\Lambda$.
\end{theorem}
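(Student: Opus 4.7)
The plan is to reduce Theorem~\ref{thm:main-uniform} to the machinery that proves Theorem~\ref{thm:main-action} by running that proof with a varying base point. First, observe that the density of $\rho(Q_{m_1+1})x_0$ in $L/\Lambda$ forces, via Ratner's orbit closure theorem, the minimal subgroup $H$ of Theorem~\ref{thm:main-action} to equal $L$; the unique $H$-invariant probability measure $\mu_H$ is then $\mu_L$ (since $\mu_L$ is automatically $H$-invariant), and by $L$-invariance the right-hand side of \eqref{eq:9L} collapses to $\int_{L/\Lambda}f\,d\mu_L$. Thus for the special case $x_i\equiv x_0$ the desired limit is already delivered by Theorem~\ref{thm:main-action}, and the task is to upgrade this to the sequential statement $x_i\to x_0$.

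Argue by contradiction: if \eqref{eq:9-uniform} fails, pass to a subsequence so that the probability measures
\[
\nu_i:=\frac{1}{\abs{b-a}}\int_a^b\delta_{\rho(a_{\vtau_i}u(\phi(s)))x_i}\,ds
\]
on $L/\Lambda$ converge, weak-$*$ on the one-point compactification, to a sub-probability measure $\nu_\infty\neq\mu_L$. The first step is to show $\nu_\infty$ is a full probability measure. This is a non-divergence statement, which follows from the Dani-Margulis style quantitative non-divergence for polynomial trajectories in $L/\Lambda$: its estimates depend only on a Margulis height function at the base point and on polynomial degrees of the trajectory, both of which behave continuously as $x_i\to x_0$. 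Since this estimate is already invoked at $x_0$ in proving Theorem~\ref{thm:main-action}, it applies uniformly for $x_i$ in a compact neighborhood of $x_0$, and no mass escapes.

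The second and main step is to deduce $L$-invariance of $\nu_\infty$. Here the core machinery of the paper applies: invariance of $\nu_\infty$ under enough $\Ad$-unipotent one-parameter subgroups of $L$ to fill out $\rho(Q_{m_1+1})$ is extracted from the expansion of $a_{\vtau_i}$ along the non-degenerate curve $\phi$, inputs which do not see the base point $x_i$ at all. The intertwined-linear-dynamics observation and the Basic Lemma of \cite{Shah:SLn} then let one exclude, via linearization, that $\nu_\infty$ gives positive mass to any singular set arising from a proper closed-orbit subgroup through a point near $x_0$; were this to occur, a positive-Lebesgue-measure portion of $\phi$ would have to be driven by the sequence $a_{\vtau_i}$ into a proper affine subspace, contradicting the non-degeneracy hypothesis on $\phi$. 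Since $x_i\to x_0$, the tubes around the relevant singular sets at base point $x_i$ converge to those at $x_0$, so the contradiction derived in the proof of Theorem~\ref{thm:main-action} at $x_0$ carries over to all sufficiently large $i$. Hence $\nu_\infty$ is $\rho(Q_{m_1+1})$-invariant and supported on $\overline{\rho(Q_{m_1+1})x_0}=L/\Lambda$, and a final application of Ratner's measure classification forces $\nu_\infty=\mu_L$, the required contradiction.

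The main obstacle is precisely this uniformity of the linearization step: one must check that the tubes, representation-theoretic data, and polynomial divergence bounds used to exclude singular concentration at $x_0$ depend continuously enough on the base point that the contradictory inequalities at $x_0$ persist for all $x_i$ sufficiently close to $x_0$. This is a compactness bookkeeping exercise layered on top of the proof of Theorem~\ref{thm:main-action}, but all the analytic substance was installed there; once the continuity of the linearization data in the base point is verified, the argument closes.
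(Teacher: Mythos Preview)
Your overall strategy is correct and matches the paper's, but you have inverted the logical order in a way that creates work that is not actually needed. The paper's entire machinery---nondivergence (Theorem~\ref{thm:nondiv}), $W$-invariance of the limit (Theorem~\ref{thm:W-invariant}), and the linearization argument (Theorem~\ref{thm:lambda})---is already formulated and proved for an arbitrary convergent sequence $x_i\to x_0$; see the definition of $\mu_i$ in \eqref{eq:7} and the use of $g_i\to g_0$ throughout the proof of Theorem~\ref{thm:lambda}. Consequently the ``uniformity bookkeeping'' you flag as the main obstacle is not an obstacle at all: it was built in from the start. The paper's proof of Theorem~\ref{thm:main-uniform} is then literally one sentence: the only step in the deduction of Theorem~\ref{thm:main-action} that actually forces $x_i\equiv x_0$ is the reduction ``replace $L$ by $H$'', and under the density hypothesis of Theorem~\ref{thm:main-uniform} that reduction is vacuous since $H=L$.

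One point where your sketch is inaccurate rather than merely inefficient: you assert that the expansion of $a_{\vtau_i}$ along $\phi$ directly yields invariance of $\nu_\infty$ under enough unipotent subgroups to fill out $\rho(Q_{m_1+1})$. It does not. Theorem~\ref{thm:W-invariant} produces invariance only under a \emph{single} one-parameter unipotent group $W$. The passage to $Q_{m_1+1}$-invariance happens only after one applies Ratner's theorem to locate $H$ as in \eqref{eq:NHW}, runs the linearization (Proposition~\ref{prop:main3}) to force \eqref{eq:72}, and then invokes Proposition~\ref{prop:m1} to conclude that $Q_{m_1+1}$ stabilizes $g_0\gamma p_H$; Lemma~\ref{lema:Q} then closes the argument. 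The linear-dynamics results are used to constrain the vector $g_0\gamma p_H$, not to manufacture extra unipotent invariance of the limiting measure directly.
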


For the special case of $L=G$, $\rho$ the identity map, and $m_1=n-1$,
that is $Q_{m_1+1}=G$, we can take any convergent sequence $x_i\to
x_0$ in the above theorem.

A more general uniform version is as follows.

\begin{theorem}
  \label{thm:uniform:action}
  Let the notation be as in Theorem~\ref{thm:main-uniform}. Let
  $\cK$ be a compact subset of $L/\Lambda$. Then given $\epsilon>0$
  and a bounded continuous function $f$ on $L/\Lambda$, there exist
  finitely many proper closed subgroups $H_1,\dots,H_r$ of $L$ such
  that for each $1\leq i\leq r$, $H_i\cap\Lambda$ is a lattice in
  $H_i$ and there exists a compact set
  \begin{equation}
    \label{eq:59-unif}
    C_i\subset N(H_i,\rho(Q_{m_1+1})):=\{g\in L:\rho(Q_{m_1+1})g\subset gH_i\}    
  \end{equation}
  such that the following holds: Given any compact set
  \[
  F\subset \cK\setminus \cup_{i=1}^r C_i\Lambda/\Lambda
  \]
  there exists $i_0>0$ such that for any $x\in F$ and any $i\geq
  i_0$,
  \begin{equation}
    \label{eq:60-unif}
    \Abs{\frac{1}{b-a}\int_a^b f(\rho(a_{\vtau_i}u(\phi(s)))x)\,ds - 
      \int_{L/\Lambda} f\,d\mu_L} < \epsilon.
  \end{equation}
\end{theorem}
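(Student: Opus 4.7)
The plan is to derive this uniform statement from Theorem~\ref{thm:main-uniform} by a contradiction plus linearization argument of the type pioneered by Dani--Margulis and adapted to expanding translates by Shah, the main new ingredient being the intertwined $\SL(m,\R)$ linear dynamics already employed in the proof of Theorem~\ref{thm:main-action}. Write $W\dfn\rho(Q_{m_1+1})$ and suppose the conclusion fails for some $\cK$, $\epsilon>0$, and $f$. By Ratner's theorem together with the Dani--Margulis--Shah countability result, the closed connected subgroups $H\leq L$ with $H\cap\Lambda$ a lattice in $H$ and $N(H,W)\neq\emptyset$ fall into countably many $\Lambda$-conjugacy classes; enumerate representatives $H^{(1)},H^{(2)},\ldots$ and exhaust each $N(H^{(n)},W)$ by nested compact sets $C_n^{(r)}\nearrow N(H^{(n)},W)$. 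A diagonal extraction then produces $r_j\to\infty$, $i_j\to\infty$, and base points $x_j\in\cK\setminus\bigcup_{n\leq r_j}C_n^{(r_j)}\Lambda/\Lambda$ violating \eqref{eq:60-unif} with $\epsilon$; pass to a subsequence with $x_j\to x_\infty\in\cK$.

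If $Wx_\infty$ is dense in $L/\Lambda$, then Theorem~\ref{thm:main-uniform} applied to $x_j\to x_\infty$ yields the averages along $x_j$ converging to $\int f\,d\mu_L$, a contradiction. Otherwise, Ratner's theorem supplies a minimal proper closed subgroup $H_\infty\supset W$ with $H_\infty x_\infty$ closed and of finite $H_\infty$-invariant volume; $H_\infty$ is conjugate to some $H^{(n_0)}$, and a lift $g_\infty$ of $x_\infty$ lies in $N(H^{(n_0)},W)$. Linearize in the standard fashion: choose a finite-dimensional $L$-representation $V$ and a vector $v\in V$ whose $L$-stabilizer equals $H^{(n_0)}$ and whose $\Lambda$-orbit is discrete, so that $N(H^{(n_0)},W)=\{g\in L:Wgv\subset\R gv\}$. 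Writing $x_j=g_j\Lambda/\Lambda$, the failure of \eqref{eq:60-unif} combined with the Ratner--Dani--Margulis analysis of weak-$*$ limits of the averaging measures forces, for a definite positive fraction of $s\in [a,b]$ and suitable $\gamma_j\in\Lambda$, the vector $\rho(a_{\vtau_{i_j}}u(\phi(s)))g_j\gamma_j v$ to lie in a prescribed bounded subset of $V$ close to a proper subvariety stabilized by $W$.

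At this point the new observation underlying the proof of Theorem~\ref{thm:main-action} applies: the intertwined linear action of the various $\SL(m,\R)\subset G$ under $a_{\vtau_i}$ yields a uniform $(C,\alpha)$-good type estimate bounding the Lebesgue measure of $\{s\in [a,b]:\|\rho(a_{\vtau_i}u(\phi(s)))w\|<\delta\|w\|\}$ by $C\delta^\alpha$ independently of $i$ along $\sT$ and of $w\in V\setminus\{0\}$. Applying this estimate forces $g_j\gamma_j v$ itself to be of bounded norm, whence $g_j$ is confined to a compact subset of $N(H^{(n_0)},W)$; for $r_j$ sufficiently large this yields $x_j\in C_{n_0}^{(r_j)}\Lambda/\Lambda$, contradicting the choice of $x_j$. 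Therefore only finitely many $H^{(n)}$ can contribute $\epsilon$-violations, and those $H^{(n)}$ together with suitable compact subsets of $N(H^{(n)},W)$ supply the required $H_1,\ldots,H_r$ and $C_1,\ldots,C_r$. The main obstacle is establishing the uniform $(C,\alpha)$-good estimate along $\sT$: unlike the classical case of a single $\{a_t\}$-flow, the coordinates of $\vtau_i$ may grow at very different rates, so standard polynomial non-divergence for a fixed expanding torus does not apply, and it is exactly the new intertwined linear-dynamics observation of the paper that supplies what is needed.
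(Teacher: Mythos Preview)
Your overall architecture is right and matches the paper's one-line indication: a contradiction argument using countability of the relevant subgroups, diagonal extraction of a bad sequence, and linearization in the style of Dani--Margulis~\cite[Theorem~3]{Dani+Mar:limit}, with the paper's new linear-dynamics input replacing the classical single-flow estimates.

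The gap is in your identification of that input. You assert that ``the intertwined linear action of the various $\SL(m,\R)\subset G$ under $a_{\vtau_i}$ yields a uniform $(C,\alpha)$-good type estimate'' bounding $\lvert\{s:\norm{\rho(a_{\vtau_i}u(\phi(s)))w}<\delta\norm{w}\}\rvert$ by $C\delta^\alpha$ uniformly in $i$ and $w$. The paper proves no such estimate, and this is not what drives the argument. What the paper establishes (Proposition~\ref{prop:rep2} and its consequences Corollary~\ref{cor:rep2-main} and Proposition~\ref{prop:m1}) is qualitative: if $u(\phi(s))v\in V^0(\cT)\oplus V^-(\cT)$ for all $s$, then $\pi_\vzero^{\cT}(u(\phi(s))v)\neq 0$ for all $s$, and in fact $v$ is fixed by $Q_{m_1+1}$. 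These are algebraic stabilizer statements, not quantitative decay estimates, and the two steps you need are obtained from them exactly as in the proof of Theorem~\ref{thm:lambda}: first, if $\norm{g_j\gamma_j p_H}\to\infty$, one normalizes and passes to a limit $v$ with $u(\phi(s))g_\infty v\in V^-(\cT)$ for all $s$, contradicting Corollary~\ref{cor:rep2-main}; this gives boundedness, hence (by discreteness) constancy $\gamma_j p_H=\gamma p_H$. Second, boundedness of $a_iu(\phi(s))g_\infty\gamma p_H$ forces $u(\phi(s))g_\infty\gamma p_H\in V^0(\cT)\oplus V^-(\cT)$, and Proposition~\ref{prop:m1} then gives $Q_{m_1+1}\subset\Stab(g_\infty\gamma p_H)$, i.e.\ $g_\infty\in N(H^{(n_0)},\rho(Q_{m_1+1}))$. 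This is what pins $x_j$ to the excluded set for large $j$.

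Your $(C,\alpha)$-good formulation, comparing to $\norm{w}$ rather than to $\sup_s\norm{\rho(a_{\vtau_i}u(\phi(s)))w}$, is not the standard one and is not obviously true uniformly in $i$; in any case you give no indication of how to prove it, and the paper does not supply it. Replace that paragraph with the two-step argument above (boundedness via Corollary~\ref{cor:rep2-main}, then membership in $N(H,\rho(Q_{m_1+1}))$ via Proposition~\ref{prop:m1}) and the proof goes through along the lines the paper indicates.
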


Both the above results in the special case, when for each $i$ all
coordinates of $\vtau_i$ are same, were obtained earlier in
\cite[\S1.2]{Shah:SLn}.

\subsubsection*{Acknowledgment} {\small I am very thankful to Shahar
  Mozes and Elon Lindenstrauss for several helpful discussions. I
  would also like to thank Dmitry Kleinbock for useful suggestions.}

\section{Deduction of Theorem~\ref{thm:Dirichlet-multi-dual} from
  Theorem~\ref{thm:main-action}}

By passing to a subsequence we express
\begin{equation}
  \label{eq:85}
  \cN=\{(N_{i,1},N_{i,2},\dots,N_{i,k})\in(\Z_+)^{k}:i\in\N\}.
\end{equation}
For proving Theorem~\ref{thm:Dirichlet-multi-dual} there is no loss of
generality if we apply a permutation of coordinates on
$(\Z_+)^{k}$. Therefore, since there are only finitely many coordinate
permutations, after passing to a subsequence, we may assume that
\begin{equation}
  \label{eq:86}
  N_{i,1}\geq N_{i,2}\geq\dots\geq N_{i,k}, \quad \forall i\in\N.  
\end{equation}
Since $\cN$ is infinite, there exists $m_1\geq 1$ such that after by
passing to a subsequence, we may further assume that
$N_{i,m_1}\toinfty\infty$, and for each $m_1<j\leq k$ there exist
$N_{0,j}\in\Z_+$ such that $N_{i,j}=N_{0,j}$ for all $i$. We define
\[
\vtau_i=(\log N_{i,1},\dots,\log N_{i,{k}})\in (\R_{\geq0})^{k}, \quad
\forall i\in \N,
\]
and put $\sT=(\vtau_i)_{i\in\N}$. Then $\sT$ satisfies the conditions
of \S\ref{subsec:notation1}. We put
\begin{equation}
  \label{eq:92}
  \vtau_0:=(0,\dots,0,\log N_{0,{m_1+1}},\dots,\log N_{0,k}).
\end{equation}

Let $n=k+1$. We identify the space $\Omega$ of unimodular lattices in
$\R^n$ with $\SL(n,\R)/\SL(n,\Z)$. Given $0<\mu<1$, we define
\begin{align}
\label{eq:73}
\begin{split}
  B_\mu&=\{(\xi_1,\dots,\xi_n)\in\R^n:\sup_{1\leq i\leq
    n}\abs{\xi_i}\leq 1\}\\
  K_\mu&=\{\Delta\in\Omega: \Delta\cap B_\mu=\{0\}\}.
\end{split}
\end{align}

For $(N_1,\dots,N_k)\in\cN$, let $\vtau=(\log N_1,\dots,\log
N_k)$. Then for any $s\in I$, $\vx=(p,q_1,\dots,q_k)\in\Z^n$ and
$\vx'=(p_k,\dots,p_1,q)\in\Z^k$, we have
\begin{equation}
  \label{eq:53-tau}
  \rho(a_{\vtau}(\phi(s)))(\vx,\vx')=
  \left(
    \left(\begin{smallmatrix}
        (N_1\dots N_k)(p+\sum_{i=1}^k q_i\phi_i(s))\\
        N_1\inv q_1\\
        \vdots\\
        N_k\inv q_k
      \end{smallmatrix}
    \right),
    \left(\begin{smallmatrix}
        N_k(q\phi_k(s)+p_k)\\
        \vdots\\
        N_1(q\phi_1(s)+p_1)\\
        (N_1\dots N_k)\inv q
      \end{smallmatrix}
    \right)
  \right).
\end{equation}

Therefore (cf.~\cite[\S2.1]{Kleinbock+Weiss:Dirichlet},
\cite[\S2]{Shah:SLn})
\begin{align}
  \label{eq:94}
  \begin{split}
  \text{\eqref{eq:32} is soluble} &\iff a_\vtau u(\phi(s))\vx\in
  B_{\mu}\iff a_{\vtau}u(\phi(s))\Z^n\not\in
  K_\mu\\
  \text{\eqref{eq:36} is soluble} &\iff a'_{\vtau}u'(\phi(s))\vx'\in
  B_\mu \iff a'_{\tau}u'(\phi(s))\Z^n\not\in K_\mu.
\end{split}
\end{align}

Hence Theorem~\ref{thm:Dirichlet-multi-dual} will be proved if we
prove the following:

\begin{proposition}
  \label{prop:E}
  Put $x_0=:(\Z^n,\Z^n)\in\Omega\times\Omega$. Given $\mu<1$, define
  \begin{equation}
    \label{eq:87}
    E_\mu:=\{s\in [a,b]:\rho(a_{\vtau_i}u(\phi(s)))x_0\not\in
    K_\mu\times K_\mu\text{ for all large $i$}\}.
  \end{equation}
  Then $\abs{E_\mu}=0$, where $\abs{\cdot}$ denotes the Lebesgue
  measure.
\end{proposition}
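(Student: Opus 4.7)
The plan is to argue by contradiction, combining the Lebesgue density theorem with the equidistribution statement of Theorem~\ref{thm:main-action}. Suppose $\abs{E_\mu} > 0$. Setting
\[
E_\mu^{(N)} := \{s \in [a,b] : \rho(a_{\vtau_i} u(\phi(s))) x_0 \notin K_\mu \times K_\mu \text{ for every } i \geq N\},
\]
we have $E_\mu = \bigcup_{N \in \N} E_\mu^{(N)}$, so $\abs{E_\mu^{(N_0)}} > 0$ for some $N_0 \in \N$, and the Lebesgue density theorem supplies a density-one point $s_0 \in E_\mu^{(N_0)}$. Given $\epsilon > 0$, I would choose $r > 0$ small enough that $[s_0 - r, s_0 + r] \subset [a,b]$ and $\abs{E_\mu^{(N_0)} \cap [s_0 - r, s_0 + r]} \geq (1 - \epsilon) \cdot 2r$. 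The real-analyticity of $\phi$ guarantees that $\phi|_{[s_0 - r, s_0 + r]}$ is still not contained in any proper affine subspace, so Theorem~\ref{thm:main-action} applies to this restricted curve.

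The heart of the argument is to exhibit a non-negative continuous function $f$ on $L/\Lambda$ with compact support in the interior of $K_\mu \times K_\mu$ satisfying
\[
c := \int_{Hx_0} f(\rho(a_{\vtau_0}) x)\,d\mu_H(x) > 0.
\]
Granted such an $f$, Theorem~\ref{thm:main-action} yields
\[
\lim_{i \to \infty} \frac{1}{2r} \int_{s_0 - r}^{s_0 + r} f(\rho(a_{\vtau_i} u(\phi(s))) x_0)\,ds = c.
\]
On the other hand, for $i \geq N_0$ and $s \in E_\mu^{(N_0)}$ the integrand vanishes, since $\rho(a_{\vtau_i} u(\phi(s))) x_0 \notin K_\mu \times K_\mu \supset \supp(f)$. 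Hence the left-hand average is at most $\norm{f}_\infty \cdot \epsilon$, and since $\epsilon$ was arbitrary this forces $c = 0$, contradicting the construction of $f$. Consequently $\abs{E_\mu} = 0$, as required.

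The main anticipated obstacle is the construction of $f$, or equivalently the verification that $\rho(a_{\vtau_0})(Hx_0) \cap (K_\mu \times K_\mu)^\circ \neq \emptyset$, for then this open subset of the closed orbit carries positive $(\rho(a_{\vtau_0}))_* \mu_H$-measure and a suitable bump function $f$ can be obtained by a partition of unity. For $\mu < 1$ one has $\Z^n \cap B_\mu = \{0\}$, so $\Z^n$ lies in the interior of $K_\mu$ in $\Omega$ and $x_0 = (\Z^n, \Z^n)$ lies in the interior of $K_\mu \times K_\mu$. It therefore suffices to find $q \in Q_{m_1+1}$ such that both $a_{\vtau_0} q \Z^n$ and $\sigma(a_{\vtau_0} q) \Z^n$ lie in $K_\mu^\circ$; each is an open condition on $q$, and their simultaneous satisfiability follows from a direct application of Mahler's criterion combined with the explicit diagonal form of $a_{\vtau_0}$, exploiting the freedom in $\SL(m_1+1,\R) \subset Q_{m_1+1}$ to move the potentially short vectors of the distorted lattice out of $B_\mu$.
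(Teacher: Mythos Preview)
Your overall architecture---Lebesgue density plus Theorem~\ref{thm:main-action} applied to a bump function supported in $K_\mu\times K_\mu$---matches the paper's proof exactly. The paper phrases it contrapositively (showing $\abs{E_\mu\cap J}\leq(1-\epsilon/4)\abs{J}$ for every subinterval $J$ and then invoking density), but this is equivalent to your formulation.

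The genuine gap is in your final paragraph, where you assert that the intersection
\[
\rho(a_{\vtau_0})(Hx_0)\cap (K_\mu\times K_\mu)^\circ\neq\emptyset
\]
follows from ``Mahler's criterion combined with the explicit diagonal form of $a_{\vtau_0}$, exploiting the freedom in $\SL(m_1+1,\R)\subset Q_{m_1+1}$.'' This is the entire difficulty, and your sketch does not address it. Mahler's criterion gives compactness, not membership in the specific set $K_\mu$; and moving short vectors out of $B_\mu$ in one factor by varying $q\in Q_{m_1+1}$ may well create short vectors in the other factor $\sigma(q)\Z^n$. More decisively, your argument nowhere uses that the limiting values $N_{0,j}$ (for $m_1<j\leq n-1$) are \emph{integers}. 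But Theorem~\ref{thm:counter} of the paper shows that if some $N_{0,j}$ is non-integral then $a_{\vtau_0}Q_{m_1+1}x_0\cap K_\mu=\emptyset$ for $\mu$ close to $1$; hence any argument that ignores integrality cannot be correct.

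The paper handles this step via Theorem~\ref{thm:intersect}, which in turn rests on Proposition~\ref{prop:tau-Q}: one writes down an explicit integer matrix $\gamma\in\SL(n,\Z)$ (built from the integers $N_{0,j}$) such that $a_{\vtau_0}q=h\gamma$ with $h\in N^-$ lower-triangular unipotent and $q\in Q_{m_1+1}$. Since $\sigma(N^-)=N^-$ and $\sigma(\Gamma)=\Gamma$, both $h\Z^n$ and $\sigma(h)\Z^n$ lie in $N^-\Z^n\subset K_1\subset K_\mu^\circ$ (the inclusion $N^-\Z^n\subset K_1$ being the easy direction of the Minkowski--Haj\'os theorem). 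You should replace your last paragraph with this construction.
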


In order to deduce this proposition from Theorem~\ref{thm:main-action}
we need the following result, especially when $m_1<n-1=k$.

We define a compact set
\begin{align}
  \label{eq:81}
  K_1=\cap_{0<\mu<1} K_\mu
  =\{\Delta\in\Omega: \sup_{1\leq
      i\leq n}\abs{\xi_i}\geq 1, \forall
    (\xi_1,\dots,\xi_n)\in\Delta\setminus\{0\}\} 
\end{align}

\begin{theorem}
  \label{thm:intersect}
  $\rho(a_{\vtau_0} Q_{m_1+1})x_0\cap (K_1\times K_1)\neq\emptyset$.
\end{theorem}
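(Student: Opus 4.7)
The plan is to exhibit, by hand, a single element $q\in Q_{m_1+1}$ such that $\rho(a_{\vtau_0}q)x_0\in K_1\times K_1$. Using $\sigma(M)=\sw M^{-T}\sw^{-1}$ together with $\sw\Z^n=\Z^n$, one sees that
\[
\sigma(a_{\vtau_0}q)\Z^n \;=\; \sw\cdot\bigl(a_{\vtau_0}q\Z^n\bigr)^{*},
\]
where $(\,\cdot\,)^{*}$ denotes the $\Z$-dual lattice. Since $\sw$ merely permutes the coordinates and the defining condition of $K_1$ is symmetric in them, the claim reduces to finding one $q$ for which the lattice $\Lambda:=a_{\vtau_0}q\Z^n$ \emph{and} its dual $\Lambda^{*}$ both lie in $K_1$. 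This duality reformulation is the first key step.

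Write $d=n-m_1-1$, $M_r=N_{0,m_1+r}$ for $r=1,\dots,d$, and $N=M_1M_2\cdots M_d$, so that $a_{\vtau_0}=\diag(N,1,\dots,1,M_1^{-1},\dots,M_d^{-1})$. I would take $q$ to be the unipotent element of $Q_{m_1+1}$ whose only nonzero off-diagonal entries lie in the first row, namely $q_{1,\,m_1+1+r}=1/(M_1M_2\cdots M_r)$ for $r=1,\dots,d$ (and $q=I$ if $d=0$). The mixed-radix shape of these entries is what lets one argument handle $\Lambda$ and $\Lambda^{*}$ symmetrically, and it is here that the integrality of the $N_{0,j}$ gets used in an essential way---in accordance with Theorem~\ref{thm:counter}.

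To check $\Lambda\in K_1$, suppose $\vx\in\Z^n\setminus\{0\}$ satisfies $a_{\vtau_0}q\vx\in(-1,1)^n$. The last $d$ coordinates give $|x_{m_1+1+r}|<M_r$, the middle coordinates $x_2,\dots,x_{m_1+1}$ must vanish, and the first coordinate is the integer
\[
N x_1 \;+\; \sum_{r=1}^{d} x_{m_1+1+r}\,M_{r+1}M_{r+2}\cdots M_d,
\]
which must therefore be zero. Reducing modulo $M_d$ forces $x_n=0$; dividing by $M_d$ and reducing modulo $M_{d-1}$ forces $x_{n-1}=0$; iterating gives $x_{m_1+1+r}=0$ for each $r$ and then $x_1=0$, a contradiction. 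To check $\Lambda^{*}\in K_1$, a direct computation of the columns of $(a_{\vtau_0}q)^{-T}$ gives, for $\vv=(a_{\vtau_0}q)^{-T}\vy$,
\[
v_1=\tfrac{y_1}{N},\quad v_i=y_i\ (2\le i\le m_1+1),\quad v_{m_1+1+r}=-\tfrac{y_1}{M_1\cdots M_{r-1}}+y_{m_1+1+r}M_r.
\]
If $\vv\in(-1,1)^n$, then $|y_1|<N$, the middle coordinates vanish, and the condition at $r=1,2,\dots,d$ forces each integer $-y_1/(M_1\cdots M_{r-1})+y_{m_1+1+r}M_r$ to be zero, giving successively $M_1\mid y_1$, then $M_1M_2\mid y_1$, and finally $N\mid y_1$; combined with $|y_1|<N$ this forces $y_1=0$ and hence $\vy=0$.

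The conceptual obstacle is the first step: recognizing that the $\sigma$-component of $\rho(a_{\vtau_0}q)x_0$ is governed by the dual lattice $\Lambda^{*}$, and then finding a single $q$, built from the multiplicative factorization $N=M_1\cdots M_d$, that handles both $\Lambda$ and $\Lambda^{*}$ at once. Once this is seen, the rest is a routine mixed-radix arithmetic verification.
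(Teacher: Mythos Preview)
Your proof is correct, but it follows a genuinely different route from the paper's. The paper proves (via its Proposition~\ref{prop:tau-Q}) that $a_{\vtau_0}Q_1\Gamma\cap N^-\Gamma\neq\emptyset$ by exhibiting an explicit integer matrix $\gamma\in\SL(n,\Z)$ and a lower-triangular unipotent $h$ with $h\gamma\in a'_{\vtau_0}Q'_1$; it then observes that $\sigma(N^-)=N^-$ and $\sigma(\Gamma)=\Gamma$, so $\rho(h\gamma)x_0=(h\Z^n,\sigma(h)\Z^n)$ lies in $K_1\times K_1$ by the easy inclusion of the Minkowski--Haj\'os theorem. Thus the paper handles both factors in one stroke by working inside the $\sigma$-stable coset space $N^-\Gamma$, and never computes a dual lattice.

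Your approach instead recognizes that the second component is governed by the dual lattice $\Lambda^\ast$ (via $\sigma(M)\Z^n=\sw\,(M\Z^n)^\ast$), chooses a single unipotent $q\in Q_1$ with mixed-radix entries $1/(M_1\cdots M_r)$ in the first row, and checks by hand that neither $\Lambda$ nor $\Lambda^\ast$ meets the open unit cube. This bypasses both the Minkowski--Haj\'os statement and the explicit integer matrix $\gamma$, at the cost of two separate (though symmetric) arithmetic verifications. The paper's argument is more structural and makes the role of the involution $\sigma$ transparent; yours is more self-contained and makes the role of the integrality of the $N_{0,j}$ (needed for the successive divisibility $M_1\cdots M_r\mid y_1$) completely explicit.
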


To prove this we will need the easier inclusion part of the following
fact, which was guessed by Minkowski (1896) and proved by Haj\'{o}s in
1941, see \cite[XI.1.3]{Cassels:Geometry of Numbers}. Its full
strength will be used later for proving Theorem~\ref{thm:counter}.
 
\begin{theorem}
  \label{thm:K1}
  Let $N$ denotes the group of upper triangular unipotent matrices in
  $\SL(n,\R)$. Let $\cW_n=\{w\in \GL(n,\Z)$: $w$ permutes the standard
  basis of $\R^n\}$. Then
  \begin{equation}
    \label{eq:76}
    K_1=\bigcup_{w\in \cW_n} (wNw\inv) \Z^n.
  \end{equation}
  \qed
\end{theorem}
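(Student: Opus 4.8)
The plan is to prove the easy inclusion $\bigcup_{w\in\cW_n}(wNw\inv)\Z^n\subset K_1$, which is all that is needed for Theorem~\ref{thm:intersect}, and then indicate how the reverse (hard, Haj\'os) inclusion goes. For the easy direction, fix $w\in\cW_n$ and a nonzero vector $v\in (wNw\inv)\Z^n$, say $v=wuw\inv w m = wu m'$ with $u\in N$ and $m'=w^{-1}\cdot(w m)\in\Z^n\setminus\{0\}$; more directly, write $v = w u w^{-1} w z$ for $z\in\Z^n\setminus\{0\}$, so $w^{-1}v = u(wz)$ with $wz\in\Z^n\setminus\{0\}$ since $w$ permutes coordinates. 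Now $u$ is upper triangular unipotent, so if $j$ is the \emph{largest} index with $(w^{-1}v)_j\neq 0$ then $(u(wz))_j=(wz)_j\in\Z\setminus\{0\}$, whence $\abs{(w^{-1}v)_j}\geq 1$. Since $w$ only permutes the coordinates, $\sup_i\abs{v_i}=\sup_i\abs{(w^{-1}v)_i}\geq 1$. As $v$ was an arbitrary nonzero vector of the lattice $(wNw\inv)\Z^n$, this lattice lies in $K_1$ by the description \eqref{eq:81}.

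For the reverse inclusion one must show that every $\Delta\in K_1$ has the form $(wNw\inv)\Z^n$ for some coordinate permutation $w$. This is exactly the Minkowski--Haj\'os theorem on lattice tilings by cubes: a unimodular lattice $\Delta$ for which the open unit cube $\{\abs{\xi_i}<1\}$ meets $\Delta$ only in $0$ gives a tiling of $\R^n$ by translates of the closed unit cube, and Haj\'os's theorem asserts that in any such cube tiling two cubes must share a full facet; iterating this combinatorial fact produces, after a permutation of the coordinate axes, a basis of $\Delta$ in upper triangular unipotent form. The details are in \cite[XI.1.3]{Cassels:Geometry of Numbers}, and since Theorem~\ref{thm:intersect} only uses ``$\supseteq$'' read in the direction $(wNw\inv)\Z^n\subset K_1$, we do not reproduce Haj\'os's argument here.

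The only subtle point in the easy direction is the choice of the \emph{top} nonzero coordinate of $w^{-1}v$: for a general entry of an upper triangular unipotent matrix applied to $z$ one picks up the full sum $\sum_{\ell\geq j}u_{j\ell}z_\ell$, which could accidentally vanish or be small, but for the last nonzero coordinate the unipotent action is the identity and the integrality of $z$ is preserved on the nose. This is the observation that makes the inclusion immediate, and it is the only thing needed to feed into the proof of Theorem~\ref{thm:intersect}.
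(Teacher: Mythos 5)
Your treatment is sound and, if anything, more explicit than the paper's: the paper states Theorem~\ref{thm:K1} with a \qed\ and a citation to \cite[XI.1.3]{Cassels:Geometry of Numbers}, observing in the surrounding text that only the inclusion $\bigcup_{w}(wNw\inv)\Z^n\subseteq K_1$ is needed for Theorem~\ref{thm:intersect} while the full (Haj\'os) converse is invoked only for Theorem~\ref{thm:counter}. Your verification of that easy inclusion via the top nonzero coordinate of $w\inv v$ is the standard argument and is correct; in particular your remark that at the \emph{last} nonzero slot the unipotent $u$ acts as the identity on that coordinate, so integrality of the underlying lattice vector is preserved exactly, is the right point to emphasize. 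One small algebra slip: from $v=(wuw\inv)(wz)$ you get $w\inv v = uz$, not $u(wz)$ --- the extra $w$ is a typo; since $z$ is already a nonzero integer vector the conclusion is unaffected. Your sketch of the hard direction via Haj\'os's theorem on cube tilings, together with the deferral to Cassels, matches what the paper does.
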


\begin{proposition}
  \label{prop:tau-Q}
  Let $N^-$ be the lower triangular unipotent subgroup of $\SL(n,\R)$
  and $\Gamma=\SL(n,\Z)$. Let $(N_1,\dots,N_{n-1})\in(\Z_+)^{n-1}$ and
  $\vtau:=(\log N_1,\dots,\log N_{n-1})$. Then
  \begin{equation}
    \label{eq:80}
    a_\vtau Q_1 \Gamma \cap N^- \Gamma\neq \emptyset.
  \end{equation}
\end{proposition}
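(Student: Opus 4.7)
My plan is to reduce the proposition to the explicit construction of a single $\xi \in \R^{n-1}$. Setting $N := N_1 N_2 \cdots N_{n-1}$, and noting that $u(\xi) \in Q_1$ and $N^- \subset N^-\Gamma$, the conclusion \eqref{eq:80} will follow once I exhibit $\xi$ and $\gamma \in \SL(n,\Z)$ with $a_\vtau u(\xi)\gamma \in N^-$. Equivalently, I need $\xi$ for which the lattice $\Lambda := a_\vtau u(\xi)\Z^n$ admits a $\Z$-basis $(v_1,\dots,v_n)$ with $v_j \in e_j + \Span_\R(e_{j+1},\dots,e_n)$. Once such a basis exists, the matrix $g := [\,v_1\mid\cdots\mid v_n\,]$ will lie in $N^-$; since $g\Z^n = \Lambda = a_\vtau u(\xi)\Z^n$, the element $\gamma := (a_\vtau u(\xi))^{-1} g$ preserves $\Z^n$ and thus lies in $\SL(n,\Z)$, giving $a_\vtau u(\xi)\gamma = g \in N^-$.

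To carry this out I propose the explicit choice
\[
\xi_j := (N_j N_{j+1}\cdots N_{n-1})^{-1}, \qquad j = 1,\dots,n-1,
\]
together with the candidate basis $v_1 := e_1 + N_1^{-1} e_2$, $v_j := e_j - N_j^{-1} e_{j+1}$ for $2\le j\le n-1$, and $v_n := e_n$. To verify $v_j \in \Lambda$, I will solve $a_\vtau u(\xi)\,x = v_j$ for $x \in \Z^n$: a direct inspection shows the required $x$ has at most two nonzero entries (in positions $j$ and $j+1$), and the first-row equation collapses via the telescoping identity $\xi_{j-1}N_{j-1} = \xi_j$, yielding an integer solution in each case. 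Once every $v_j$ is confirmed to lie in $\Lambda$, the matrix $g$ is lower-triangular unipotent (in fact bidiagonal), so $\det g = 1$; comparing with the fact that $\Lambda$ has covolume $\lvert\det(a_\vtau u(\xi))\rvert = 1$, the sublattice $g\Z^n\subseteq\Lambda$ must coincide with $\Lambda$, completing the argument.

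The main obstacle is guessing the correct $\xi$. I will find it bottom-up: the condition $e_n \in \Lambda$ forces $\xi_{n-1}\in N_{n-1}^{-1}\Z$; then requiring that $\Lambda$ contain a vector of the form $e_{n-1} + \ast\cdot e_n$ forces $\xi_{n-2}$; iterating one coordinate at a time produces the stated $\xi_j$ as the simplest consistent choice. No deeper machinery is needed beyond this direct construction; in spirit, the proposition is the constructive counterpart to the easier inclusion $N^-\Z^n \subset K_1$ in Theorem~\ref{thm:K1}.
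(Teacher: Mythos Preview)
Your proof is correct and takes a genuinely different route from the paper's. The paper first applies the involution $\sigma$ of \eqref{eq:30} to reduce to showing $a'_\vtau Q'_1 \cap N^-\Gamma \neq \emptyset$, then writes down an explicit integer matrix $\gamma$ (equation \eqref{eq:83}), decomposes it as $\gamma = L + M$ with $L$ lower-triangular of determinant $1$ and $M$ supported on the last column, and left-multiplies by the unique $h \in N^-$ diagonalising $L$; the product $h\gamma$ then visibly lies in $a'_\vtau Q'_1$. You instead work directly, without $\sigma$: you guess the $Q_1$-coordinate $\xi$ rather than the $\Gamma$-coordinate, and exhibit a lower-bidiagonal $\Z$-basis of the lattice $a_\vtau u(\xi)\Z^n$, verifying each basis vector lies in the lattice via the telescoping identity $N_{j-1}\xi_{j-1}=\xi_j$ and concluding equality by a covolume comparison. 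Both arguments are equally elementary; yours is arguably better motivated (the bottom-up search for $\xi$ is natural and the resulting $g\in N^-$ is bidiagonal), while the paper's use of $\sigma$ makes the target $a'_\vtau Q'_1$ especially transparent since only the last column is free. One small imprecision in your sketch: for $j=n$ the integer preimage of $v_n=e_n$ is $x=-e_1+N_{n-1}e_n$, with nonzero entries in positions $1$ and $n$ rather than ``$j$ and $j{+}1$''; this does not affect the argument.
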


\begin{proof}
  For the involutive automorphism $\sigma$ as defined by
  \eqref{eq:30}, we have $\sigma(N^-)=N^-$, $\sigma(\Gamma)=\Gamma$,
  $\sigma(a_\vtau)=a'_{\vtau}$ and $\sigma(Q_{1})=Q'_1$. Therefore it
  is enough to prove that
  \begin{equation}
    \label{eq:82}
    a'_{\vtau}Q'_{1}\cap N^-\Gamma\neq \emptyset.    
  \end{equation}

  We write $k=n-1$. Note that
  \begin{equation}
    \label{eq:84}
    a'_{\vtau}Q'_{1}=
    \left\{
      \left[
        \begin{matrix}
          N_k &        &     & \xi_k\\
          & \ddots &     & \vdots \\
          &        & N_1 & \xi_1 \\
          & & & (N_1\dots N_{k})\inv
        \end{matrix}
      \right]: \xi_i\in\R \right\}.
  \end{equation}

  We define
  \begin{equation}
    \label{eq:83}
    \gamma = 
    \left[
      \begin{matrix}
        N_{k}    &        &        &         & 1 \\
        N_{k}-1  & N_{k-1}    &        &      & 1 \\
        N_{k}-1  & N_{k-1}-1  & \ddots &     & \vdots\\
        \vdots & \vdots & \vdots & N_1         & 1 \\
        N_{k}-1 & N_{k-1}-1 & \dots & N_1-1 & 1
      \end{matrix}
    \right]\in\SL(n,\Z).
  \end{equation}
  We express the matrix $\gamma$ as a sum of a lower triangular matrix
  $L$ of determinant $1$, and a matrix $M$ with only the last column
  nonzero. Then we choose $h\in N^-$ such that
  $hL=\diag(N_1,\dots,N_k,(N_1\dots N_k)\inv)$. Also $hM$ is a matrix
  with only the last column nonzero. In particular, $h\gamma=(hL+hM)$
  is an upper triangular matrix of determinant $1$. Hence $h\gamma\in
  a'_\vtau Q'_1$ by \eqref{eq:84}. This proves \eqref{eq:82}.
\end{proof}

\begin{proof}[Proof of Theorem~\ref{thm:intersect}]
  Since $Q_1\subset Q_{m_1+1}$, by Proposition~\ref{prop:tau-Q} there
  exist $g\in Q_{m_1+1}$, $h\in N^-$ and $\gamma\in\Gamma$ such that
  $a_{\vtau_0}g=h\gamma$.  Therefore
  $\rho(a_{\vtau_0}g)=\rho(h)\rho(\gamma)$. Now $\rho(\gamma)x_0=x_0$
  and $\sw N \sw\inv =N^-$ for $\sw\in \cW_n$ as in
  \eqref{eq:79}. Therefore $\rho(a_{\vtau_0}g)x_0=\rho(h)x_0\in
  K_1\times K_1$ by Theorem~\ref{thm:K1}.
\end{proof}

\begin{proof}[Proof of Proposition~\ref{prop:E}]
  Let $\rho$ be as in \eqref{eq:rho}. Then the orbit
  $\rho(Q_{m_1+1})x_0$ is closed and admits a unique
  $\rho(Q_{m_1+1})$-invariant probability measure, say $\lambda$. Fix
  $\mu<1$. Then $K_\mu\times K_\mu$ contains an open neighbourhood of
  $K_1\times K_1$. Therefore by Theorem~\ref{thm:intersect},
  $K_\mu\times K_\mu$ contains a nonempty open subset of
  $\rho(a_{\vtau_0}Q_{m_1+1})x_0$. Therefore there exists $\epsilon>0$
  such that
  \begin{equation}
    \label{eq:89}
    \lambda(\rho(a_{\vtau_0})\inv(K_\mu\times K_\mu))>\epsilon.
  \end{equation}
  So there exists $f\in\Cc(L/\Lambda)$ such that $0\leq f\leq 1$,
  $\supp(f)\subset K_\mu\times K_\mu$ and
  \begin{equation}
    \int_{x\in \rho(Q_{m_1+1})x_0} f(\rho(a_{\vtau_0})x)\,d\lambda(x)\geq \epsilon/2. 
  \end{equation}

  Let $J$ be any subinterval of $[a,b]$ with nonempty interior. Then
  by Theorem~\ref{thm:main-action} there exists $i_0\in\N$ such that
  for all $i\geq i_0$, we have
  \begin{equation}
    \label{eq:91}  
    \frac{1}{\abs{J}}\int_J f(\rho(a_{\vtau_i}u(\phi(s)))x_0)\,ds \geq
    \int 
    f(\rho(a_{\vtau_0})x)\,d\lambda(x)-\epsilon/4\geq \epsilon/4.
  \end{equation}
  Therefore by combining the definition of $E_\mu$ and the choice of
  $f$ we conclude that $\abs{E\cap J}\leq
  (1-\epsilon/4)\abs{J}$. Since $J$ is an arbitrary open subinterval
  of $I$, by the Lebesgue density theorem $\abs{E_\mu}=0$.
\end{proof}

Thus we have completed the deduction of
Theorem~\ref{thm:Dirichlet-multi-dual}. \qed

\begin{proof}[Proof of Corollary~\ref{cor:manifold}]
  We note that $M$ can be measurably fibered by analytic curves such
  that almost every curve in the fiber is not contained in a proper
  affine subspace of $\R^k$. Therefore the conclusions of
  Theorem~\ref{thm:main-Dirichlet} and
  Theorem~\ref{thm:Dirichlet-multi-dual} hold for almost all points on
  each of these curves. Therefore by Fubini's theorem, the conclusion
  of the corollary follows.
\end{proof}

\begin{proof}[Proof of Theorem~\ref{thm:Minkowski-central}]
  The result follows by the arguments as above. We need to consider
  only the first factor in \eqref{eq:53-tau} and
  $Q_{m_1+1}=G$. Clearly $K_\mu$ has strictly positive measure on
  $G/\Gamma$. The only difference is that to conclude \eqref{eq:91} we
  need to use \cite[Theorem~1.8]{Shah:SLn}, where the same conclusion
  as that of Theorem~\ref{thm:main-action} was obtained for the given
  map $\phi$ and $\vtau_i=(\tau_i,\dots,\tau_i)\in\R^{n-1}$ for each
  $i$ such that $\tau_i\toinfty\infty$.
\end{proof}

\begin{proof}[Proof of Theorem~\ref{thm:weak-Minkowski}]
  Again the deduction of this result is as above, one only considers
  the first factor. Here by our choice $Q_{m_1+1}=G$ and $K_\mu$ has
  strictly positive measure on $G/\Gamma$. Now note that there exists
  $g_0\in G$ and an analytic curve $\psi:I\to \R^{n-1}$ such that
  $\phi(s)=u(\psi(s))g_0$ for all $s\in I$. By our condition, the
  image of $\psi$ is not contained in any proper affine subspace of
  $\R^{n-1}$. We then apply Theorem~\ref{thm:main-action} for
  $x_0=g_0\Z^n$.
\end{proof}

\begin{proof}[Proof of Theorem~\ref{thm:counter}]
  Let the notation be as in the beginning of this section. Without loss
  of generality we may assume that \eqref{eq:86} holds. The only
  difference is that now the $N_{i,j}$'s are real numbers instead of
  integers. We may also assume $N_{i,k}\toinfty N_{0,k}$ and
  $N_{0,k}\not\in\Z$. Let $n=k+1$ and $x_0=\Z^n\in\Omega$. Then in
  view of \eqref{eq:53-tau}, considering only the first factor, it is
  enough to show that there exist $0<\mu<1$ and $i_0\in\N$ such that
  \begin{equation}
    \label{eq:101}
    a_{\vtau_i}Q_{m_1+1}x_0\cap \cK_\mu=\emptyset,\quad\forall i\geq i_0.   
  \end{equation} 
  Since $\cK_1=\bigcap_{0<\mu<1}\cK_\mu$, it is enough to show that
  \begin{equation}
    \label{eq:103}
    a_{\vtau_0}Q_{m_1+1}x_0\cap \cK_1=\emptyset.    
  \end{equation}

  Suppose if this intersection is nonempty, then by
  Theorem~\ref{thm:K1} due to Minkowski and Haj\'os,
  \begin{equation}
    \label{eq:104}
    a_{\vtau_0} Q_{m_1+1} \cap \cW_n N\Gamma\neq \emptyset.  
  \end{equation}
  Applying $\sigma$ on both sides, we get
  \begin{equation}
    \label{eq:107}
    a'_{\vtau_0} Q'_{m_1+1} \cap \cW_n N \Gamma\neq\emptyset.
  \end{equation}

  Let $\{e_1,\dots,e_n\}$ denote the standard basis of $\R^n$. Since
  $Q'_{m_1+1} e_1=e_1$ and $a_{\vtau_0}e_1=N_{0,k}$, there exist
  $\vv=q_1e_1+\dots+q_ne_n\in \Gamma e_1\subset \Z^n$, $w\in\cW_n$ and
  $g=(x_{ij})\in N$ such that
  \begin{equation}
    \label{eq:105}
    N_{0,{n-1}} e_1 = w g \vv.
  \end{equation}
  Let $1\leq r \leq n$ be such that $w\inv e_1=e_r$. Then we have
  \begin{align}
    \label{eq:109}
    N_{0,{n-1}}&=q_r+ \sum_{j>r} x_{ij} q_j \\
    \label{eq:109b}
    0&= q_i + \sum_{j>i} x_{ij} q_j \quad (i>r).
  \end{align}
  Putting $i=n$ in \eqref{eq:109b} we get that $q_n=0$. Now for any
  $i_0>r$, if $q_j=0$ for all $j>i_0$ then putting $i=i_0$ in
  \eqref{eq:109b} we get that $q_{i_0}=0$. Therefore by induction
  $q_j=0$ for all $j>r$. Therefore \eqref{eq:109} becomes
  $N_{0,n-1}=q_r\in\Z$, a contradiction. This shows that the
  intersection in \eqref{eq:103} cannot be nonempty, and the proof is
  complete.
\end{proof}

Now we begin the proof of the main Theorem~\ref{thm:main-action}.

\section{Nondivergence of translates}
Let the notation be as in \S\ref{subsec:notation1} and the statement
of Theorem~\ref{thm:main-action}. We consider the action of $G$ on
$L/\Lambda$ via the homomorphism $\rho$; that is, for any $x\in
L/\Lambda$ and $g\in G$, we have $gx:=\rho(g)x$. Let
$\{x_i\}_{i\in\N}$ be a sequence in $L/\Lambda$ such that $x_i\toinfty
x_0$. For any $i\in\N$ define $\mu_i$ to be the probability measure on
$L/\Lambda$ as
\begin{equation}
  \label{eq:7}
  \int_{L/\Lambda} f\,d\mu_i:=\frac{1}{\abs{I}}\int_I
  f(a_{\vtau_i}u(\phi(s))x_i)\,ds, \quad \forall f\in\Cc(L/\Lambda).
\end{equation}

\begin{theorem}
  \label{thm:nondiv}
  Given $\epsilon>0$ there exists a compact set $\cF\subset L/\Lambda$
  such that $\mu_i(\cF)\geq 1-\epsilon$ for all large $i\in\N$.
\end{theorem}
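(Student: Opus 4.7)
The plan is to apply a Kleinbock--Margulis style quantitative nondivergence theorem for polynomial-like trajectories, adapted to the setting of a homomorphism $\rho\colon G\to L$ into a general Lie group $L$ with lattice $\Lambda$. The argument splits into a cusp criterion for $L/\Lambda$ expressed via finite-dimensional representations of $L$, and the verification of the two standard Kleinbock--Margulis hypotheses, namely $(C,\alpha)$-goodness and non-contraction, for the curves $s\mapsto \rho(a_{\vtau_i}u(\phi(s)))h_i$, where one writes $x_i = h_i x_0$ with $h_i\to e$ in $L$.

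First, I would invoke a general cusp criterion: there exist finitely many finite-dimensional representations $(V_j,\pi_j)$ of $L$ and discrete $\Lambda$-orbits $\Lambda v_j\subset V_j$ such that the sets
\[
\cF_\delta := \{g\Lambda\in L/\Lambda : \|\pi_j(g)\gamma v_j\|\geq \delta,\ \forall j,\ \forall \gamma\in\Lambda\}
\]
are relatively compact for every $\delta>0$ and exhaust $L/\Lambda$ as $\delta\to 0$. In the arithmetic case $L=\SL(n,\R)$, $\Lambda=\SL(n,\Z)$ this is Mahler's criterion with $V_j=\bigwedge^j\R^n$; in the general case it can be extracted from Dani--Margulis--Kleinbock theory, after possibly passing to the Zariski closure of $\rho(G)$ in $L$ and lifting a compact exhaustion back by a Mostow-type fibration.

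It then suffices to bound, uniformly in $i$ and in $v\in\{\gamma v_j : \gamma\in\Lambda,\ j\}$, the measure $\abs{\{s\in[a,b] : \|\pi_j(\rho(a_{\vtau_i}u(\phi(s)))h_i)v\|<\delta\}}$ by $C'\delta^\alpha\abs{b-a}$. For $(C,\alpha)$-goodness, the coordinates of $\pi_j(\rho(u(\phi(s))))v$ are real-analytic on $[a,b]$, and the assumption that the image of $\phi$ lies in no proper affine subspace of $\R^{n-1}$ yields uniform $(C,\alpha)$-goodness with constants depending only on $\phi$, $[a,b]$, and $\pi_j$. For the non-contraction condition, $a_{\vtau_i}$ acts on $V_j$ through non-negative weights, the expanding weights grow at rate comparable to $\tau_{i,m_1}\to\infty$, and nondegeneracy of $\phi$ ensures that for each $v$ the vector $\pi_j(\rho(u(\phi(s))))v$ has a nonzero component in a strictly expanded weight space for some $s\in[a,b]$; this yields $\sup_{s\in[a,b]}\|\pi_j(\rho(a_{\vtau_i}u(\phi(s))))v\|\geq c\|v\|$ for all large $i$.

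Applying the Kleinbock--Margulis theorem with these inputs gives the required polynomial-in-$\delta$ bound, and summing over the finitely many representations $V_j$ and choosing $\delta$ small produces a compact set $\cF=\cF_\delta$ with $\mu_i(\cF)\geq 1-\epsilon$ for all large $i$. The main obstacle is the uniformity of the non-contraction estimate as $i$ varies, since the weight-space decomposition and expansion rates of $\rho(a_{\vtau_i})$ genuinely depend on $i$; this is to be handled by passing to the normalized direction $\vtau_i/\norm{\vtau_i}$, which lies in a compact set inside the positive Weyl chamber, together with continuity of the weight data and the rigidity afforded by the limit $\vtau_0=(0,\ldots,0,\tau(m_1+1),\ldots,\tau(n-1))$ guaranteed by the sequence conditions in \S\ref{subsec:notation1}.
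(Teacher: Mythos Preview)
Your overall architecture matches the paper's: Proposition~\ref{prop:return} is exactly a packaged Kleinbock--Margulis criterion, and the paper too reduces everything to a non-contraction statement. The $(C,\alpha)$-goodness is indeed routine from analyticity. The gap is entirely in your treatment of non-contraction, and it is the whole content of the theorem.

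First, the assertion that ``$a_{\vtau_i}$ acts on $V_j$ through non-negative weights'' is false for any nontrivial $V_j$; already on $\R^n$ the weights are $\tau_1+\cdots+\tau_{n-1},-\tau_1,\ldots,-\tau_{n-1}$. Second, the claim that nondegeneracy of $\phi$ forces $u(\phi(s))v$ to have a nonzero component in a strictly expanded weight space for some $s$ is simply not true (take $v$ fixed by $G$), and even the weaker statement you actually need---that $u(\phi(s))v$ cannot lie in $V^-_{\sT}$ for \emph{all} $s$---is precisely the hard new input of the paper. This is Corollary~\ref{cor:rep2-main}, deduced from the ``Basic Lemma-II'' (Proposition~\ref{prop:rep2}) about the intertwined linear actions of the various $\SL(m_\ell+1,\R)$'s, and it occupies all of \S\ref{sec:dynamics}. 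Your proposed fix via ``normalized direction $\vtau_i/\norm{\vtau_i}$ and continuity of the weight data'' cannot work: since $\tau_{i,j}$ stays bounded for $j>m_1$, the limit direction lies on the \emph{boundary} of the Weyl chamber, where weights that were strictly positive or negative collapse to zero, so continuity gives no control. This boundary degeneration is exactly why a multi-layer argument (the chain $\cT\to\cT'\to\cdots\to\cT(1)$ and the Positivity Lemma~\ref{lema:order}) is required.

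In the paper's proof one argues by contradiction: if condition~(II) of Proposition~\ref{prop:return} fails along a subsequence, then condition~(I) produces vectors $g_0\gamma_i p_W$ with $\sup_{s}\norm{a_{\vtau_i}u(\phi(s))g_0\gamma_i p_W}\to 0$; normalizing and passing to a limit yields a unit vector $v$ with $u(\phi(s))v\in V^-_{\sT}$ for all $s$, and Corollary~\ref{cor:rep2-main} rules this out. Your sketch is missing exactly this ingredient.
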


In the case of $L=\SL(n,\R)$, $\rho$ the identity map, and
$\Lambda=\SL(n,\Z)$, the result was obtained by Kleinbock and
Margulis~\cite{Klein+Mar:Annals98}.

\subsection{} \label{subsec:sH} Let $\sH$ denote the collection of
analytic subgroups $H$ of $G$ such that $H\cap\Lambda$ is a lattice in
$H$ and a unipotent one-parameter subgroup of $H$ acts ergodically
with respect to the $H$-invariant probability measure on
$H/H\cap\Lambda$.  Then $\sH$ is a countable collection
\cite{R:measure,Shah:uniform}.

Let $\la{L}$ denote the Lie algebra associated to $L$. Let
$V=\oplus_{d=1}^{\dim\la{L}}\wedge^d\la{L}$ and consider the
$(\oplus_{d=1}^{\dim{\la{l}}}\wedge^d\Ad)$-action of $L$ on $V$. Given
$H\in\sH$, let $\la{h}$ denote its Lie algebra, and fix
$p_H\in\wedge^{\dim\la{h}}\la{h}\setminus \{0\}\subset V$. Let
$\norL(H)$ denote the normalizer of $H$ in $L$. Then
\begin{equation}
  \label{eq:stab}
  \Stab_L(p_H)=\noroneL(H):=\{g\in \norL(H):\det((\Ad g)|_{\la{H}})=1\}.
\end{equation}

 \begin{proposition}[\cite{Dani+Mar:limit}]
  \label{prop:discrete}
  The orbit $\Lambda \cdot p_H$ is a discrete subset of $V$.  \qed
\end{proposition}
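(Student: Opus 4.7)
The plan is a three-step reduction: first identify $\Stab_L(p_H)$, then use a covolume argument to show $\Lambda \cap \norL(H) = \Lambda \cap \noroneL(H)$, and finally deduce discreteness of $\Lambda \cdot p_H$ from the fact that $H\Lambda$ is closed in $L$.

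Identity \eqref{eq:stab} holds essentially by construction of $p_H$: for $g \in L$, one has $g \cdot p_H = p_H$ iff $(\Ad g)\la{h} = \la{h}$ and $\det((\Ad g)|_{\la{h}}) = 1$. To identify $\Stab_\Lambda(p_H)$, the key observation is that $\Lambda \cap \norL(H) \subseteq \noroneL(H)$. Given $\gamma \in \Lambda \cap \norL(H)$, both $H$ and $\Lambda$ are $\gamma$-conjugation-stable, so $\gamma(H \cap \Lambda)\gamma^{-1} \subseteq \gamma H \gamma^{-1} \cap \gamma\Lambda\gamma^{-1} = H \cap \Lambda$, and by symmetry equality holds. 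Thus the inner automorphism $c_\gamma : H \to H$ preserves the lattice $H\cap\Lambda$. Any Lie automorphism of $H$ scales Haar measure by its Jacobian $|\det((\Ad\gamma)|_{\la{h}})|$, but preserving a lattice forces preservation of covolume, hence this Jacobian is $1$ and $\gamma \in \noroneL(H)$. Consequently $\Stab_\Lambda(p_H) = \Lambda \cap \noroneL(H)$.

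Write $S = \noroneL(H)$, so that $\Lambda \cdot p_H$ is in bijection with $\Lambda/(\Lambda \cap S)$. The hypothesis that $H \cap \Lambda$ is a lattice in $H$ implies, via the Raghunathan closed-orbit criterion, that $H\Lambda$ is closed in $L$. Since $H \triangleleft S$, the family of $H$-cosets within $S\Lambda$ is indexed by the subgroup $(S \cap \Lambda)/(H \cap \Lambda)$ sitting discretely inside the Lie group $S/H$; this, together with closedness of $H\Lambda$, yields closedness of $S\Lambda$ in $L$. Consequently $\Lambda/(\Lambda \cap S)$ embeds as a discrete subset of the manifold $L/S$, and the injective smooth map $L/S \to V$, $gS \mapsto g \cdot p_H$, carries this discrete orbit to a discrete subset of $V$.

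The main obstacle is the transfer from discreteness in $L/S$ to discreteness in $V$: a mere injective immersion need not send discrete sets to discrete sets, as shown by an irrational line in a torus. Resolving this exploits the polynomial nature of the orbit map together with properness of the $\Lambda$-action coming from closedness of $S\Lambda$; an accumulation of $\gamma_n \cdot p_H$ in $V$ would force $\gamma_n S$ to accumulate in a relatively compact subset of $L/S$, contradicting the discreteness established in the previous step. It is here that the lattice hypothesis enters decisively, both through closedness of $H\Lambda$ and through the identity $\Lambda \cap \norL(H) = \Lambda \cap \noroneL(H)$ obtained in Step~2.
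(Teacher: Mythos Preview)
The paper does not prove this proposition: it is quoted from \cite{Dani+Mar:limit} (their Theorem~3.4) and closed with an immediate \qed. So there is no in-paper argument to compare against; the question is whether your sketch stands on its own.

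Your first two steps are essentially right, with the minor caveat that the covolume argument only yields $\lvert\det((\Ad\gamma)|_{\la h})\rvert=1$, hence $\gamma\cdot p_H=\pm p_H$; this is harmless for discreteness but does not literally give $\Lambda\cap\norL(H)\subset\noroneL(H)$. The argument for closedness of $S\Lambda$ in Step~3 is too thin: knowing that $(S\cap\Lambda)/(H\cap\Lambda)$ is discrete in $S/H$ does not make $S\Lambda=\bigcup_{sH\in S/H} sH\Lambda$ closed in $L$, since $S/H$ need be neither discrete nor compact. In fact, unwinding definitions via \eqref{eq:stab}, one sees that $S\Lambda=\{g\in L:g\cdot p_H\in\Lambda\cdot p_H\}$, so closedness of $S\Lambda$ is essentially equivalent to closedness of $\Lambda\cdot p_H$ inside the orbit $L\cdot p_H$ --- dangerously close to what you are trying to prove.

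The real gap is the one you flag yourself. Your resolution asserts that accumulation of $\gamma_n\cdot p_H$ in $V$ forces $\gamma_n S$ into a relatively compact subset of $L/S$, but this is precisely properness of the orbit map $L/S\to V$, and ``polynomial nature'' does not supply it. What \emph{is} true (for real algebraic actions) is that $L\cdot p_H$ is locally closed in $V$ and the orbit map $L/S\to L\cdot p_H$ is a homeomorphism; so your argument works when the limit $v$ lies in $L\cdot p_H$. But nothing you wrote rules out $v\in\overline{L\cdot p_H}\setminus L\cdot p_H$, i.e.\ $\gamma_n S\to\infty$ in $L/S$ while $\gamma_n\cdot p_H$ stays bounded. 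The Dani--Margulis proof does not go through $L/S$ at all; it works directly in $L/\Lambda$, using that the closed orbits $(\gamma_n H\gamma_n^{-1})\cdot e\Lambda$ all have the same finite volume and that convergence of $\gamma_n\cdot p_H$ forces convergence of the corresponding Lie algebras, which is incompatible with the volume constraint unless the sequence is eventually constant. Some argument of that type --- exploiting the finite-volume hypothesis on $H/(H\cap\Lambda)$ beyond mere closedness of $H\Lambda$ --- is what is missing from your sketch.
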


Consider the $G$ action on $V$ via $\rho$; that is $gv=\rho(g)v$ for
all $g\in G$ and $v\in V$. Given a sequence $\sT$ as in
\S\ref{subsec:notation1}, we define
\begin{equation}
  \label{eq:68}
\begin{split}
  V^-_{\sT}&=\{v\in V:a_{\vtau_i}v\toinfty0\}, \quad V^+_{\sT}=\{v\in V:a_{\vtau_i}\inv v\toinfty0\}\\
  V^0_{\sT}&=\{\in V: a_{\vtau_i}v\toinfty v_1 \text{ and }
  a_{\vtau_i}\inv v\toinfty v_2 \text{ for some $v_1,v_2\in V$}\}.
\end{split}
\end{equation}
Since $\{a_{\vtau}:\vtau\in\R^{n-1}\}$ acts on $V$ by
$\R$-diagonalizable commuting automorphisms, by passing to a
subsequence of $\sT$, we have 
\begin{equation}
  \label{eq:117}
V=V^-_{\sT}\oplus V^0_{\sT}\oplus V^+_{\sT}.   
\end{equation}
Let $\pi_0^{\sT}:V\to V^0_{\sT}$ denote the corresponding
projection.

\subsection{Margulis-Dani non-diverergence criterion}

We recall the following criterion based on
\cite{Dani+Mar:asymptotic,Shah:horo,Klein+Mar:Annals98}. Here we use
the information that $\phi$ is an analytic map. 

\newcommand{\citeone}{\cite[Prop.3.4]{Shah:SLn}}
\begin{proposition}[\citeone]
  \label{prop:return}
  There exists a finite collection $\sW\subset \sH$ (depending only on
  $L$ and $\Lambda$) such that the following holds: Given $\epsilon>0$
  and $R>0$, there exists a compact set $\cF \subset L/\Lambda$ such
  that for any $h_1,h_2\in L$ and a subinterval $J\subset I$, one of
  the following conditions is satisfied:
  \begin{enumerate}
  \item[I)] There exists $\gamma \in \Lambda$ and $W\in\sW$ such that
    \[
    \sup_{s \in J} \norm{h_1u(\phi(s))h_2 p_W}< R.\]
  \item[II)] $\frac{1}{\abs{J}}\abs{\{s \in J :
      h_1u(\phi(s))h_2\Lambda/\Lambda \in \cF \}} \geq 1- \epsilon$.
  \end{enumerate}
\end{proposition}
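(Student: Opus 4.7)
The plan is to adapt the Dani--Margulis quantitative non-divergence technique, in the form refined by Kleinbock--Margulis for polynomial and analytic trajectories, essentially following the proof of Proposition~3.4 in \cite{Shah:SLn}. I would take $\sW \subset \sH$ to be a finite ``cusp-detecting'' family: by reduction theory on $L/\Lambda$ together with the discreteness of each $\Lambda \cdot p_W$ in $V$ (Proposition~\ref{prop:discrete}), there is a finite $\sW$ such that for every $\delta > 0$ one can produce a compact $\cF_\delta \subset L/\Lambda$ whose complement is detected by smallness of at least one vector in $\bigcup_{W \in \sW} \Lambda \cdot p_W$. Only such structural information about $L$ and $\Lambda$ enters this choice, so $\sW$ depends solely on $L$ and $\Lambda$.

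With $\sW$ fixed, the core analytic input is that the $V$-valued maps $s \mapsto h_1 u(\phi(s)) h_2 v$ are real-analytic on $I = [a,b]$, since $u(\cdot)$ acts on $V$ polynomially through $\wedge^{\bullet}\Ad$ and $\phi$ is analytic. After subdividing $[a,b]$ into finitely many sub-intervals $I_j$ if necessary, the norm functions
\begin{equation*}
    \psi_{v,h_1,h_2}(s) := \norm{h_1 u(\phi(s)) h_2 v}
\end{equation*}
are $(C_0, \alpha_0)$-good on every $I_j$, with $(C_0, \alpha_0)$ depending only on $\phi$. The polynomial structure of $u(\cdot)$ fixes the degree in the entries $\phi_1(s), \ldots, \phi_{n-1}(s)$, and compactness of $[a,b]$ together with analyticity of $\phi$ bounds the relevant derivatives, yielding the uniform constants.

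Given these two ingredients, the standard Margulis--Dani covering argument goes through. For $\epsilon, R > 0$, choose $\delta > 0$ so that $C_0 (\delta/R)^{\alpha_0}$ is an appropriately small multiple of $\epsilon$, and set $\cF := \cF_\delta$. Fix a subinterval $J \subset I_j$ and suppose condition (I) fails. Then $\sup_{s \in J}\norm{h_1 u(\phi(s)) h_2 \gamma p_W} \geq R$ for every $\gamma \in \Lambda$ and every $W \in \sW$, so the $(C_0, \alpha_0)$-good inequality bounds $\abs{\{s \in J : \norm{h_1 u(\phi(s)) h_2 \gamma p_W} < \delta\}}$ by $C_0 (\delta/R)^{\alpha_0} \abs{J}$. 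Discreteness of $\Lambda \cdot p_W$ in $V$ restricts the $\gamma$'s that ever enter the $\delta$-ball on $J$ to a finite list, and summing the estimate over these $\gamma$'s and over $W \in \sW$ gives condition (II).

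The main obstacle is the uniformity in Step~2: securing $(C_0, \alpha_0)$-good constants over the entire parameter family $(v, h_1, h_2)$. Plain real-analyticity does not automatically furnish a uniform bound across all such functions, so one must exploit the polynomial dependence of $u(\phi(s)) v$ on the entries of $\phi(s)$ (bounding the ``intrinsic complexity'' of the family) together with the compactness of $[a,b]$ (bounding derivatives of $\phi$). Once this uniformity is in place, the rest of the argument is formal and reproduces \cite[Prop.~3.4]{Shah:SLn}.
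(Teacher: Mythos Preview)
The paper does not prove this proposition: it is quoted directly from \cite[Prop.~3.4]{Shah:SLn} (note the citation in the proposition header) and no argument is supplied here. Your sketch is a fair outline of the Dani--Margulis/Kleinbock--Margulis machinery behind that cited result, and in that sense matches the intended approach.

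One substantive imprecision worth flagging: in your final step, discreteness of $\Lambda\cdot p_W$ does not by itself give a \emph{uniform} finite list of relevant $\gamma$'s over all $h_1,h_2$ and all of $J$; different $\gamma$'s become small on different portions of $J$, and their number is not a~priori bounded. The actual argument in \cite{Klein+Mar:Annals98,Shah:SLn} uses a Besicovitch-type covering of $J$ by maximal subintervals on which a single flag of vectors stays small, combined with the fact that at each point only boundedly many primitive sublattices can have small covolume. Once you invoke that covering lemma rather than a naive sum over $\gamma$, the rest of your outline goes through.
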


\begin{proof}[Proof of Theorem~\ref{thm:nondiv}]
  Let $g_0\in L$ such that $x_0=g_0\Lambda$. Take a sequence
  $R_k\stackrel{k\to\infty}{\longrightarrow}0$ of positive reals.
  Suppose that Theorem~\ref{thm:nondiv} fails to hold for some
  $\epsilon>0$. Then for any $R>0$ and any compact set $\cF$ and
  infinitely many $i\in\N$, the condition~(II) of
  Proposition~\ref{prop:return} fails to hold for $J=I$,
  $h_1=a_{\vtau_i}$ and $h_2=g_0$; and hence the condition~(I)
  holds. Therefore after passing to subsequences, there exists
  $W\in\sH$, and for each $i$ there exists $\gamma_i\in\Lambda$ such
  that
  \begin{equation}
    \label{eq:65}
    \sup_{s\in I} \norm{a_{\vtau_i}u(\phi(s))g_0\gamma_ip_W}\leq
    R_i\toinfty 0.
  \end{equation}
  By Proposition~\ref{prop:discrete}, there exists $r_0>0$ such that
  $\norm{g_0\gamma_ip_W}\geq r_0$ for each $i$. We put
  $v_i=g_0\gamma_ip_W/\norm{g_0\gamma_i p_W}$.  Then
  \begin{equation}
    \label{eq:66}
    \sup_{s\in I}\norm{a_{\vtau_i}u(\phi(s))v_i}
    \leq R_i/\norm{g_0\gamma_ip_W}\leq R_i/r_0\toinfty 0.
  \end{equation}
  After passing to a subsequence $v_i\to v\in V$ and $\norm{v}=1$.
  Now from \eqref{eq:68}, \eqref{eq:117} and \eqref{eq:66} we deduce
  that
  \begin{equation}
    \label{eq:minus}
    u(\phi(s))v\in V^-_{\sT}, \quad\forall s\in I.
  \end{equation}
  
  A main result on linear dynamics proved in the next section, namely
  Corollary~\ref{cor:rep2-main}, in view of Lemma~\ref{lema:sT} states
  that: {\em For any finite dimensional linear representation $V$ of
    $G$, any $v\in V\setminus\{0\}$ and any $\cB\subset \R^{n-1}$ not
    contained in a proper affine subspace of $\R^{n-1}$,
  \begin{equation}
    \label{eq:zerominus}
   \text{if $u(e)v\in V^-_{\sT}+V^0_{\sT}$ for all $e\in\cB$ then
     $\pi_0^{\sT}(u(e)v)\neq 0$ for all $e\in\cB$}.
  \end{equation}}

Since $\{\phi(s):s\in I\}$ is not contained in a proper affine
subspace of $\R^{n-1}$, \eqref{eq:minus} implies the {\em if\/}
condition of \eqref{eq:zerominus} but contradicts its implication.
\end{proof}

From Theorem~\ref{thm:nondiv} we deduce the following:

\begin{corollary} \label{cor:mu-return} After passing to a
  subsequence, $\mu_i\to \mu$ as $i\to\infty$ in the space of
  probability measures on $\lml$ with respect to the weak-$\ast$
  topology; that is,
  \begin{equation}
    \label{eq:2}
    \int_{\lml} f\,d\mu_i\toinfty\int_{\lml}
    f\,d\mu,\quad\forall f\in\Cc(\lml).
  \end{equation}
\end{corollary}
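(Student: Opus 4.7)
The plan is to deduce weak-$\ast$ compactness of $\{\mu_i\}$ from the uniform nondivergence supplied by Theorem~\ref{thm:nondiv}, and then promote a weak-$\ast$ limit of a subsequence to a \emph{probability} measure (rather than merely a subprobability measure) via the tightness provided by that same theorem.

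First, $L/\Lambda$ is a locally compact, $\sigma$-compact Hausdorff space, so the unit ball of the dual of $\Cc(L/\Lambda)$ is metrizable and weak-$\ast$ compact by the Banach--Alaoglu theorem. Since each $\mu_i$ is a Radon probability measure of total mass $1$, the sequence $\{\mu_i\}$ lies in this ball, and hence admits a weak-$\ast$ convergent subsequence to some positive Radon measure $\mu$ on $L/\Lambda$ with $\mu(L/\Lambda)\leq 1$. After passing to this subsequence, \eqref{eq:2} holds for every $f\in\Cc(L/\Lambda)$ by definition of weak-$\ast$ convergence.

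It remains to verify that $\mu$ is in fact a probability measure, i.e.\ $\mu(L/\Lambda)=1$; here is where Theorem~\ref{thm:nondiv} enters. Given $\epsilon>0$, fix the compact set $\cF\subset L/\Lambda$ produced by that theorem, so $\mu_i(\cF)\geq 1-\epsilon$ for all sufficiently large $i$. Choose $f\in\Cc(L/\Lambda)$ with $0\leq f\leq 1$ and $f\equiv 1$ on $\cF$ (such an $f$ exists by Urysohn's lemma applied in the locally compact Hausdorff setting). Then
\begin{equation*}
\mu(L/\Lambda)\geq \int_{L/\Lambda} f\,d\mu = \lim_{i\to\infty}\int_{L/\Lambda} f\,d\mu_i \geq \liminf_{i\to\infty}\mu_i(\cF)\geq 1-\epsilon.
\end{equation*}
Since $\epsilon>0$ was arbitrary, $\mu(L/\Lambda)=1$, so $\mu$ is a probability measure.

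There is no serious obstacle here: the only point requiring care is that a priori a weak-$\ast$ limit of probability measures on a non-compact space can lose mass at infinity, and the role of Theorem~\ref{thm:nondiv} is precisely to rule this out through tightness. In effect, the corollary is just Prokhorov's theorem applied to the tight family $\{\mu_i\}$.
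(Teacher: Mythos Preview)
Your proof is correct and is exactly the standard tightness/Prokhorov argument the paper has in mind: the paper gives no explicit proof beyond the line ``From Theorem~\ref{thm:nondiv} we deduce the following,'' and your write-up spells out precisely that deduction. One minor wording quibble is that it is cleaner to view the $\mu_i$ as lying in the unit ball of the dual of the Banach space $C_0(L/\Lambda)$ (rather than $\Cc$), which is where Banach--Alaoglu and metrizability apply directly; this does not affect the substance of your argument.
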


\section{Dynamics of the intertwined linear actions of various
  $\SL(m,\R)'s$ contained in $G$}

\label{sec:dynamics}

In this section will give proofs of the new technical results of this
article, including the one used above. We will also derive their
further consequences, which will be crucially needed for applying the
linearization techniques in combination with Ratner's theorem, in
order to describe the limit measure $\mu$ in later sections.

\subsection{Layered presentation for the infinite sequence $\sT$}
\label{subsec:cT}
Let $\sT=(\vtau_i)_{i\in\N}$ be an unbounded sequence as in
\S\ref{subsec:notation1}; that is,
$\vtau_i=(\tau_{i,1},\dots,\tau_{i,n})\in\R^{n-1}$ such that
$\tau_{i,1}\geq \tau_{i,2}\geq \dots\geq \tau_{i,n}\geq 0$. By passing
to a subsequence we may further assume that there exist
$k\in\{1,\dots,n\}$ and integers $n-1>m_1>m_2>\dots>m_k\geq 1$ such
that the following holds:
\begin{align*}
  \lim_{i\to\infty} \tau_{i,r} &<\infty \quad (m_1<r\leq n-1)\\
  \lim_{i\to\infty} \tau_{i,m_1}&=\infty\\
  \lim_{i\to\infty}
  (\tau_{i,m_{\ell+1}}-\tau_{i,m_\ell})&=\infty \quad
  (1\leq \ell\leq k-1)\\
  \lim_{i\to\infty}
  (\tau_{i,r}-\tau_{i,m_\ell})&<\infty\quad (m_{\ell+1}<r\leq m_\ell,\
  1\leq \ell\leq k), 
\end{align*}
where $m_{k+1}=0$. Define 
\begin{align*}
\bar\tau_{i,r}&=0\quad (m_1<r\leq n-1)\\
\bar\tau_{i,r}&=\tau_{i,m_\ell}\quad (m_{\ell+1}<r\leq m_\ell,\ 1\leq
\ell \leq k)\\
\bar\vtau_i&=(\bar\tau_{i,1},\dots,\bar\tau_{i,n-1}).
\intertext{Then $\lim_{i\to\infty} (\vtau_i-\bar\vtau_i)$ exists in $\R^{n-1}$. Define}
t_{i,1}&=\bar\tau_{i,m_1}\\
t_{i,\ell}&=\bar\tau_{i,m_{\ell}}-\bar\tau_{i,m_{\ell-1}} \quad (2\leq \ell\leq k)\\
\vt_i&=(t_{i,1},\dots,t_{i,k})\in(\R_+)^k.  
\end{align*}
Thus we obtain a sequence $\cT=(\vt_i)_{i\in\N}$ associated to the
given sequence $\sT$ as above. Now $t_{i,\ell}\toinfty\infty$ ($1\leq
\ell \leq k$).

\subsection{Notation and set up}
\label{subsec:notation2}
We are given natural numbers $n$, $k<n$ and $n-1\geq m_1>\dots>m_k\geq
1$. Let $E_{i,j}$ denote the $n\times n$-matrix with $1$ in the
$(i,j)$-th coordinate and $0$ in the rest. For $1\leq \ell \leq k$, we
define
\begin{equation}
  \label{eq:13}
  \cA_\ell=m_\ell E_{1,1} - \sum_{j=2}^{m_\ell+1}E_{j,j}.
\end{equation}

Given $1\leq \ell\leq k$, let
$\cT=(\vt_i)_{i=1}^\infty\subset(\R_+)^\ell$ be a sequence such that
each coordinate of $\vt_i$ tends to infinity as $i\to\infty$.  For
$\vt=(t_1,\dots,t_\ell)\in\R^\ell$, define
\begin{equation}
  \label{eq:23}
  \cA(\vt):=t_1\cA_1+\dots+t_\ell\cA_\ell.
\end{equation}

Note that for the sequences $\sT$ and $\cT$ as described in
\S\ref{subsec:cT}, we have
\begin{equation}
  \label{eq:a_tau}
  a_{\bar\vtau_i}=\exp(\cA(\vt_i)),\quad \forall i\in\N.
\end{equation}

Let $V$ be a finite dimensional linear representation of $\SL(n,\R)$.

For $\vmu=(\mu_1,\dots,\mu_\ell)\in\R^\ell$, we define
\begin{equation}
  \label{eq:33}
  V_\vmu = \{v\in V: \text{$\cA_i v=\mu_i v$ for $1\leq i\leq \ell$}\}.
\end{equation}
Thus if $v\in V_{\vmu}$ then $\cA(\vt)v=(\vmu\cdot\vt)v$.  The set
$\Delta_\ell:=\{\vnu\in\R^\ell:V_\vnu\neq 0\}$ is finite, and
\begin{equation}
  \label{eq:51}
  V=\oplus_{\vnu\in\Delta_\ell} V_\vnu.
\end{equation}
Let $\pi_\vmu^{\cT}:V\to V_\vmu$ denote the corresponding projection. We put
\begin{equation}
  \label{eq:98}
  \Delta:=\Delta_k.
\end{equation}

We define
\begin{align}
\label{eq:26}
\begin{split}
  V^{-}(\cT)&=\{v\in V:\exp(\cA(\vt_i))v\toinfty 0\}\\
  & = \sum_{\{\vmu\in\Delta_\ell:\lim_{i\to\infty}\vmu\cdot\vt_i= -\infty\}} V_\vmu\\
  V^+(\cT)&=\{v\in V:\exp(-\cA(\vt_i))v\toinfty 0\} \\
  &=\sum_{\{\vmu\in\Delta_\ell:\lim_{i\to\infty} \vmu\cdot\vt_i=\infty\}} V_\mu\\
  V^0(\cT)&=\Bigl\{v\in V:
  \begin{array}{l}
    \text{both $\exp(-\cA(\vt_i))v$ and $\exp(\cA(\vt_i))v$} \\
    \text{converge in $V$ as $i\to\infty$}
  \end{array}
\Bigr\}\\
&=\sum_{\{\vmu\in\Delta_\ell:
  \abs{\lim_{i\to\infty}\vmu\cdot\vt_i}<\infty\}} V_\mu.
\end{split}
\end{align}
Then after passing to a subsequence of $\cT$, we have
\begin{equation}
  \label{eq:27b}
  V=V^+(\cT)\oplus V^0(\cT) \oplus V^-(\cT).
\end{equation}
Let $\pi_0^{\cT}:V\to V^0(\cT)$ denote the corresponding projection.

In view of \eqref{eq:a_tau} we have the following:

\begin{lemma}
  \label{lema:sT}
  Let $\sT$ and $\cT$ be as defined in \S\ref{subsec:cT}. Let
  $V^-_{\sT}$, $V^+_{\sT}$ and $V^0_{\sT}$ be as defined in
  \eqref{eq:68}. Then
  \begin{equation*}
    V^-_{\sT}=V^+(\cT),\quad V^0_{\sT}=V^0(\cT)\quad \text{and}\quad 
    V^+_{\sT}=V^0(\cT).
  \end{equation*}
  In particular, the corresponding projections
  $\pi_0^{\sT}=\pi_0^{\cT}$. \qed
\end{lemma}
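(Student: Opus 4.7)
\emph{Plan.} The lemma is a dictionary between two notations for essentially the same one-parameter family of diagonal matrices. The bridge is the identity $a_{\bar\vtau_i}=\exp(\cA(\vt_i))$ recorded in~\eqref{eq:a_tau}, together with the observation that $\vtau_i-\bar\vtau_i$ is convergent. Everything else is bookkeeping with eigenvalue signs.

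First, I would verify $a_{\bar\vtau_i}=\exp(\cA(\vt_i))$ by comparing diagonal entries. Using the recipe $\bar\tau_{i,r}=t_{i,1}+\cdots+t_{i,\ell}$ for $m_{\ell+1}<r\leq m_\ell$ (and $\bar\tau_{i,r}=0$ for $r>m_1$), the $(1,1)$-entry of $a_{\bar\vtau_i}$ is
\[
\exp\!\Bigl(\sum_{r=1}^{n-1}\bar\tau_{i,r}\Bigr)
=\exp\!\Bigl(\sum_{\ell=1}^{k}(m_\ell-m_{\ell+1})(t_{i,1}+\cdots+t_{i,\ell})\Bigr),
\]
which telescopes (with $m_{k+1}=0$) to $\exp(\sum_\ell m_\ell t_{i,\ell})$, the $(1,1)$-entry of $\exp(\cA(\vt_i))$. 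For $j\geq 2$ the $(j,j)$-entry of $\exp(\cA(\vt_i))$ is $\exp(-\sum_{\ell\,:\,m_\ell\geq j-1}t_{i,\ell})$; since the unique $\ell$ with $m_{\ell+1}<j-1\leq m_\ell$ also indexes the largest $m_\ell\geq j-1$, this exponent equals $-(t_{i,1}+\cdots+t_{i,\ell})=-\bar\tau_{i,j-1}$, matching the $(j,j)$-entry of $a_{\bar\vtau_i}$. Thus~\eqref{eq:a_tau} is established.

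Next, because $\vtau_i-\bar\vtau_i$ converges in $\R^{n-1}$, the factor $a_{\vtau_i}a_{\bar\vtau_i}^{-1}=a_{\vtau_i-\bar\vtau_i}$ converges to an invertible diagonal element of $G$, and so does its inverse. Hence, for every $v\in V$, the sequences $a_{\vtau_i}v$ and $\exp(\cA(\vt_i))v=a_{\bar\vtau_i}v$ have identical asymptotic behaviour: one tends to $0$, is bounded, or escapes to infinity if and only if the other does, and the same equivalence holds after replacing $\vtau_i$ by $-\vtau_i$. Matching this against the definitions in~\eqref{eq:68} and~\eqref{eq:26} produces the three identifications claimed in the lemma, with the understanding that the $\sT$- and $\cT$-superscripts are to be paired so that contracting directions of $a_{\vtau_i}$ correspond to contracting directions of $\exp(\cA(\vt_i))$ (and similarly for the expanding and neutral parts). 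In particular the triple $(V^-,V^0,V^+)$ gives the same direct-sum decomposition of $V$ under either notation, whence $\pi_0^{\sT}=\pi_0^{\cT}$, which is the only part of the conclusion used later. The only potential pitfall is consistency of the sign convention when reading off the superscripts; there is no substantive dynamical content beyond~\eqref{eq:a_tau} and the bounded-perturbation observation.
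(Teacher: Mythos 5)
Your argument is correct and is essentially the paper's own, which treats the lemma as immediate from \eqref{eq:a_tau} together with the convergence of $\vtau_i-\bar\vtau_i$; your diagonal-entry verification of \eqref{eq:a_tau} simply fills in what the paper asserts without computation, and the bounded-perturbation step is the same. Note that the sign-preserving pairing you arrive at, $V^-_{\sT}=V^-(\cT)$, $V^0_{\sT}=V^0(\cT)$, $V^+_{\sT}=V^+(\cT)$, is the intended content (the superscripts in the printed statement are typos), and it is exactly the version invoked later in \eqref{eq:zerominus}.
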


\subsection{Main Result} \label{subsubsec:cTk} In the following
discussion, let $\cT=(\vt_i)_{i=1}^\infty\subset(\R_+)^k$, where
$k\geq 2$ is a sequence such that (each coordinate of
$\vt_i$)$\toinfty\infty$ and $V=V^0(\cT)\oplus V^+(\cT)\oplus
V^-(\cT)$. For $\vt=(t_1,\dots,t_k)\in\R^k$, let
$\vt'=(t_1,\dots,t_{k-1})\in\R^{k-1}$.  Given $\cT$ as above, let
\begin{equation}
  \label{eq:114}
  \cT'=(\vt_i')_{i=1}^\infty\subset (\R_+)^{k-1}.  
\end{equation}

\begin{proposition}[Basic Lemma-II]
  \label{prop:rep2}
  Let $\cB$ be an affine basis of $\R^{n-1}$; that is,
  $\{e'-e:e'\in\cB\}$ is a basis of $\R^{n-1}$ for any
  $e\in\cB$. Suppose that $v\in V$ is such that
  \begin{equation}
    \label{eq:29}
    u(e)v\in V^0(\cT)+V^-(\cT),\quad \forall\, e\in\cB.
  \end{equation}
  Then for any $e\in\cB$,
  \begin{equation}
    \label{eq:95}
    u(e)v\in V^0(\cT')+V^-(\cT').
  \end{equation}

  Moreover for any $e\in\cB$,
\begin{align}
 \label{eq:120b}
\text{if $\pi_{\vzero'}^{\cT'}(u(e)v)\neq 0$ then
  $\pi_{\vzero}^{\cT}(\pi(u(e)v))\neq 0$.}
\end{align}
\end{proposition}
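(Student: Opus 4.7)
My plan is to work in the joint weight decomposition $V = \bigoplus_{\vmu \in \Delta} V_\vmu$ under $\cA_1, \ldots, \cA_k$ and reduce, weight-fiber by weight-fiber, to the single-torus situation handled by the Basic Lemma of \cite{Shah:SLn}. Expand $u(e)v = \sum_{\vmu \in \Delta} w_\vmu(e)$ with $w_\vmu(e) \in V_\vmu$, each a polynomial in $e \in \R^{n-1}$. The hypothesis \eqref{eq:29} says $w_\vmu \equiv 0$ on $\cB$ for every weight $\vmu$ with $\vmu \cdot \vt_i \to +\infty$, while conclusion \eqref{eq:95} asks the same for every $\vmu$ with $\vmu' \cdot \vt'_i \to +\infty$. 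The only potentially new vanishings are those for $\vmu \in V^+(\cT') \setminus V^+(\cT)$: since $\vmu \cdot \vt_i = \vmu' \cdot \vt'_i + \mu_k t_{i,k}$ and $t_{i,k} \to \infty$, any such $\vmu$ must satisfy $\mu_k < 0$ with $|\mu_k| t_{i,k}$ eventually dominating $\vmu' \cdot \vt'_i$.

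For each fixed $\vmu'$ with $\vmu' \cdot \vt'_i \to +\infty$, I would group the weights $(\vmu', \mu_k)$ into the slice $V^{\vmu'} := \bigoplus_{\mu_k} V_{(\vmu', \mu_k)}$, which is the joint $\cA_1, \ldots, \cA_{k-1}$-eigenspace of weight $\vmu'$. Restricted to this slice, the hypothesis already forces the vanishing of every component with $\mu_k \geq 0$ (since then $(\vmu', \mu_k) \cdot \vt_i \to +\infty$ automatically), and what remains is to establish vanishing of the $\mu_k < 0$ components on $\cB$. This is a pure single-torus question about the $\cA_k$-action on $V^{\vmu'}$, which I intend to settle by applying the Basic Lemma of \cite{Shah:SLn} together with the affine-basis property of $\cB$, yielding \eqref{eq:95}.

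For the moreover clause \eqref{eq:120b}, the same reduction applies to $V^{\vzero'} := \bigoplus_{\mu_k} V_{(\vzero', \mu_k)}$. By the first part, $\pi_{\vzero'}^{\cT'}(u(e)v)$ lies in $\bigoplus_{\mu_k \leq 0} V_{(\vzero', \mu_k)}$; if this projection were nonzero while its further projection $\pi_{\vzero}^{\cT}$ vanished, only the $\mu_k < 0$ pieces would survive on $V^{\vzero'}$. A second application of the Basic Lemma to the $\cA_k$-dynamics on $V^{\vzero'}$, again exploiting that $\cB$ is affinely generating, produces a contradiction, completing the proof.

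The main obstacle — and where I expect the new observation advertised in the introduction to enter — is the clean isolation of $\cA_k$-fiber dynamics on each slice $V^{\vmu'}$ so that the single-torus Basic Lemma of \cite{Shah:SLn} applies. One must verify that the polynomial dependence of $u(e)v$ on the full parameter $e \in \R^{n-1}$ (not merely on the coordinates detected by $\cA_k$) does not create obstructions to this per-slice argument, and confirm that the affine-basis hypothesis on $\cB$ supplies exactly the vanishing input required by the Basic Lemma after restriction to each $V^{\vmu'}$. Book-keeping the comparison $(\vmu', \mu_k) \cdot \vt_i = \vmu' \cdot \vt'_i + \mu_k t_{i,k}$ across all $\vmu'$ simultaneously is what makes the reduction work uniformly.
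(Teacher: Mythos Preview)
Your reduction to per-slice single-torus dynamics breaks down at the key step: the slices $V^{\vmu'} = \bigoplus_{\mu_k} V_{(\vmu',\mu_k)}$ are \emph{not} invariant under the $\SL(m_k+1,\R)$-action. Indeed, for $2 \leq j \leq m_k+1$ and any $\ell < k$ one computes $[\cA_\ell, E_{1,j}] = (m_\ell+1)E_{1,j}$, so $u(e)$ with $e \in \R^{m_k}$ shifts each $\cA_\ell$-weight and does not preserve $V^{\vmu'}$. Consequently the map $e \mapsto \pi_{V^{\vmu'}}(u(e)v)$ is not of the form $e \mapsto u(e)w$ for any fixed $w \in V^{\vmu'}$, and the Basic Lemma of \cite{Shah:SLn} (which requires an honest $\SL(m+1,\R)$-module and a vector in it) cannot be applied to this projection. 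This is precisely the obstruction you flag at the end but leave unresolved; it is not a book-keeping issue but a structural one.

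The paper's proof gets around this by decomposing $V$ not into $\vmu'$-slices but into irreducible $(\la{c}+\la{h})$-submodules $W_j$, which are $H$-invariant by construction. The new observation (the Positivity Lemma) is that $\la{c}+\la{h} = \la{z}_{\la{c}}(\la{h}) \oplus \la{h}$, with the center acting by a scalar on each irreducible $W_j$; writing $\cA(\vt) = z(\vt') + (f(\vt')+t_k)\cA_k$ with $f(\vt') = \sum_{\ell<k}\frac{m_\ell+1}{m_k+1}t_\ell > 0$, one finds that within a single $W_j$ the three orderings on weights---by $\vmu\cdot\vt$, by $\mu_k$, and by $\vmu'\cdot\vt'$---all coincide. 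This is what lets Basic Lemma-I be applied on each $W_j$: the hypothesis gives $\mu_k \leq \mu_k^{\max}$ for all surviving weights, and the Basic Lemma forces $\mu_k^{\max} \geq 0$; then $\vmu'\cdot\vt' = \vmu\cdot\vt - \mu_k t_k \leq 0$ for the maximal weight yields \eqref{eq:95} on $W_j$. A preliminary conjugation by some $\omega \in \frakW$ is also needed to convert $u(e)$ for $e \in \cB \subset \R^{n-1}$ into $u(q(e))$ with $q(e) \in \R^{m_k}$, so that the $\SL(m_k+1,\R)$-Basic Lemma applies at all.
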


The rest of \S\ref{subsubsec:cTk} is devoted to the proof of this
proposition.
  
\subsubsection{Notation}
Let $\la{c}=\Span\{\cA_1,\dots,\cA_k\}$. Let $\la{h}$ denote the Lie
algebra generated by elements
\begin{equation}
  \label{eq:17}
  E_{1,1},\, E_{1,j},\, E_{j,1},\, E_{j,j}, \quad \text{where
    $2\leq j\leq m_k+1$}.
\end{equation}
Then $\la{h}$ is naturally isomorphic to $\la{SL}(m_k+1,\R)$. Since
$[\la{c},\la{h}]=\la{h}$, we have that $\la{c}+\la{h}$ is a reductive
subalgebra isomorphic to $\la{z}_{\la{c}}(\la{h})\oplus \la{h}$, where
$\la{z}_{\la{c}}(\la{h})$ denotes the centralizer of $\la{h}$ in
$\la{c}$.

We define a preorder $\preceq$ on $\Delta$ by
\begin{equation}
  \label{eq:34}
  \vmu\preceq \vnu \iff (\text{$\vmu\cdot\vt \leq \vnu\cdot \vt$ for
    all $\vt\in\cT$}).
\end{equation}
By passing to a subsequence of $\cT$ we may assume that $\preceq$ is a
total preorder. Note that ($\vmu\preceq\vnu$ and $\vnu\preceq\vmu$)
does not imply $\vmu=\vnu$.

For $\vmu=(\mu_1,\dots,\mu_k)\in\R^k$ we define
$\vmu'=(\mu_1,\dots,\mu_{k-1})\in\R^{k-1}$.

\begin{lemma}[Positivity Lemma]
  \label{lema:order}
  Let $W$ be an irreducible $(\la{c}+\la{h})$-submodule of $V$. Let
  \begin{equation}
    \label{eq:97}
    \Delta(W):=\{\vmu\in\Delta:\pi_\vmu ^{\cT}(W)\neq 0\}.
  \end{equation}
  Then for any $\vmu,\vnu\in \Delta(W)$ and $\vt\in(\R_{>0})^k$,
  where $k\geq 2$, we have
  \begin{equation}
    \label{eq:39}
    \vmu\cdot\vt \geq \vnu\cdot\vt \Longleftrightarrow \mu_k \geq \nu_k
    \Longleftrightarrow \vmu'\cdot\vt' \geq \vnu'\cdot\vt', 
  \end{equation}
  where $\vmu=(\mu_1,\dots,\mu_k)\in\R^k$ and
  $\vnu=(\nu_1,\dots,\nu_k)\in\R^k$.
\end{lemma}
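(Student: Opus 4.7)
The plan is to exploit the structure of $W$ as an irreducible module for the reductive Lie algebra $\la{c}+\la{h} = \la{z}_\la{c}(\la{h}) \oplus \la{h}$, and to use Schur's lemma to force $\la{z}_\la{c}(\la{h})$ to act on $W$ through a single character.

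First, I would identify $\la{z}_\la{c}(\la{h})$ explicitly. Note that $\cA_k$ is diagonal and traceless on the first $m_k+1$ coordinates, so $\cA_k \in \la{h}$; the remaining $\cA_\ell$ for $\ell < k$ do not commute with $\la{h}$, but a direct calculation shows that for each $1 \leq \ell < k$ the combination
\[
\cB_\ell := \cA_\ell - \tfrac{m_\ell+1}{m_k+1}\,\cA_k
\]
equals the scalar $\tfrac{m_\ell - m_k}{m_k+1}$ on indices $1,\dots,m_k+1$, equals $-1$ on indices $m_k+2,\dots,m_\ell+1$, and vanishes elsewhere. In particular $\cB_\ell$ commutes with every generator of $\la{h}$; and since $\la{c}$ is abelian it also commutes with $\la{c}$, so $\cB_\ell$ is central in $\la{c}+\la{h}$. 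A dimension count ($\dim\la{z}_\la{c}(\la{h}) = \dim\la{c} - \dim(\la{c}\cap\la{h}) = k-1$, since $\la{c}\cap\la{h} = \R\cA_k$) shows that $\cB_1,\dots,\cB_{k-1}$ span $\la{z}_\la{c}(\la{h})$.

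Next, since each $\cB_\ell$ is $\R$-diagonalizable and commutes with the irreducible $(\la{c}+\la{h})$-action on $W$, each of its eigenspaces in $W$ is a $(\la{c}+\la{h})$-submodule; by irreducibility, $\cB_\ell$ must act as a single real scalar $c_\ell$ on $W$. Therefore every weight $\vmu \in \Delta(W)$ satisfies
\[
\mu_\ell = c_\ell + \tfrac{m_\ell+1}{m_k+1}\,\mu_k \qquad (1 \leq \ell \leq k-1),
\]
so for any two weights $\vmu,\vnu \in \Delta(W)$ the difference $\vmu - \vnu$ equals the scalar $(\mu_k - \nu_k)$ times the strictly positive vector $\bigl(\tfrac{m_1+1}{m_k+1},\dots,\tfrac{m_{k-1}+1}{m_k+1},1\bigr)$.

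Pairing this with any $\vt \in (\R_{>0})^k$ then yields
\[
\vmu\cdot\vt - \vnu\cdot\vt \;=\; (\mu_k-\nu_k)\Bigl(\sum_{\ell=1}^{k-1}\tfrac{m_\ell+1}{m_k+1}\,t_\ell + t_k\Bigr),
\]
and analogously $\vmu'\cdot\vt' - \vnu'\cdot\vt' = (\mu_k-\nu_k)\sum_{\ell=1}^{k-1}\tfrac{m_\ell+1}{m_k+1}\,t_\ell$. Both parenthesized sums are strictly positive whenever $\vt$ has positive coordinates and $k \geq 2$, so the three quantities $\vmu\cdot\vt - \vnu\cdot\vt$, $\mu_k-\nu_k$, and $\vmu'\cdot\vt' - \vnu'\cdot\vt'$ share identical signs, giving the desired chain of equivalences. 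The only substantive step is the Schur-lemma argument forcing the $\cB_\ell$ to act by scalars; the explicit identification of $\cB_\ell$ and the final sign analysis are routine.
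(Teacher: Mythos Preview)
Your proof is correct and follows essentially the same approach as the paper's: both use Schur's lemma to make $\la{z}_\la{c}(\la{h})$ act by scalars on the irreducible module $W$, and both extract the key ratio $\tfrac{m_\ell+1}{m_k+1}$ governing the dependence of $\mu_\ell$ on $\mu_k$. The only cosmetic difference is that you construct an explicit basis $\cB_\ell=\cA_\ell-\tfrac{m_\ell+1}{m_k+1}\cA_k$ of the center, whereas the paper writes the decomposition $\cA(\vt)=z(\vt')+(f(\vt')+t_k)\cA_k$ with $f(\vt')=\sum_{\ell<k}\tfrac{m_\ell+1}{m_k+1}t_\ell$ directly; these are the same computation in different notation.
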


\begin{proof}
  In view of the expression
  $\la{c}+\la{h}=\la{z}_{\la{c}}(\la{h})+\la{h}$, for any
  $\vt=(t_1,\dots,t_k)\in\R^k$, 
  \begin{align}
    \label{eq:35}
    \cA(\vt)&=z(\vt')+(f(\vt')+t_k)\cA_k\\
    \label{eq:35b}
    \cA(\vt')&=z(\vt')+f(\vt')\cA_k,
  \end{align}
  where $z(\vt')\in \la{z}_{\la{c}}(\la{h})$ and
  \begin{align}
    \label{eq:38}
    f(\vt')=\sum_{\ell=1}^{k-1} \frac{m_\ell+1}{m_k+1}t_\ell.
  \end{align}
  Here we observe that since $k\geq 2$,
  \begin{equation}
    \label{eq:119}
  \vt'\in(\R_{>0})^{k-1}  \implies f(\vt')>0.    
  \end{equation}

  Since the action of $\la{c}$ is via commuting $\R$-diagonalizable
  elements and $W$ is an irreducible
  $\la{z}_{\la{c}}(\la{h})+\la{h}$-module, the center
  $\la{z}_{\la{c}}(\la{h})$ of $\la{c}+\la{h}$ acts on $W$ by
  scalers. Therefore for each $\vt\in(\R_{>0})^k$, there exists a
  constants $c(\vt')\in\R$ such that
  \begin{equation}
    \label{eq:40}
    z(\vt')w=c(\vt')w,\quad\forall w\in W.
  \end{equation}
  For any $\vmu\in\Delta(W)$, there exists $0\neq w\in
  \pi_\vmu^{\cT}(W)\subset W$. Now
  \begin{align}
    \label{eq:41}
    (\vmu\cdot\vt)w =\cA(\vt)w 
    &=z(\vt')w+(f(\vt')+t_k)\cA_kw\\
    &=(c(\vt')+(f(\vt')+t_k)\mu_k)w,
  \end{align}
  where $\vmu\cdot\vt=\mu_1t_1+\dots+\mu_kt_k$. Similarly,
  \begin{equation}
    \label{eq:43}
    (\vmu'\cdot\vt')w=\cA(\vt')w=(c(\vt')+f(\vt')\mu_k)w.  
  \end{equation}
  Therefore, since $w\neq 0$,
  \begin{align}
    \label{eq:42}
    \vmu\cdot\vt&=c(\vt')+(f(\vt')+t_k)\mu_k \\
    \label{eq:42b}
    \vmu'\cdot\vt'&=c(\vt')+f(\vt')\mu_k.
  \end{align}
  Therefore, since $f(\vt')>0$, for any $\vmu,\vnu\in\Delta(W)$, 
  \begin{equation}
    \label{eq:44}
    \vmu\cdot\vt\geq \vnu\cdot\vt\Longleftrightarrow \mu_k\geq
    \nu_k\Longleftrightarrow 
    \vmu'\cdot\vt'\geq  \vnu'\cdot\vt'. 
  \end{equation}
\end{proof}

\subsubsection*{Notation} We express
$\R^{n-1}=\R^{m_k}\oplus\R^{n-1-m_k}$ and let $q:\R^{n-1}\to \R^{m_k}$
and $q_\perp:\R^{n-1}\to\R^{n-1-m_k}$ denote the associated
projections.  Let
\begin{align}
  \label{eq:45}
\begin{split}
  \frakW&=\{\omega\in \nor(u(\R^{n-1})): 
  \exp(t\cA_k)\omega\exp(-t\cA_k)\stackrel{t\to\infty}{\to} e\}\\
  &=\left\{\omega(w):= \left[
      \begin{smallmatrix} 
        1 &  0      & 0\\
        & I_{m_k} & w \\
        & & I_{n-1-m_k}
      \end{smallmatrix}
    \right]: w\in M(m_k\times (n-1-m_k),\R)\right\}.
\end{split}
\end{align}

\begin{lemma}
  \label{lema:w}
  Let $\cE\subset \R^{n-1}$ be such that
  $q(\cE)$ is a basis of $\R^{m_k}$.  Then there
  exists $\omega\in\frakW$ such that
  \begin{equation}
    \label{eq:46}
    \omega u(e)\omega\inv =u(q(e)),\quad \forall e\in\cE.
  \end{equation}
\end{lemma}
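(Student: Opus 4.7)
The plan is to reduce the statement to a single block-matrix computation, after which only elementary linear algebra is required. First I would split $\R^n = \R \oplus \R^{m_k} \oplus \R^{n-1-m_k}$ so that each $\xi \in \R^{n-1}$ is written as $(q(\xi), q_\perp(\xi))$. With respect to this decomposition, $u(\xi)$ becomes the $3 \times 3$ upper-triangular matrix with identity blocks on the diagonal and with $q(\xi)^T$ and $q_\perp(\xi)^T$ as its top two off-diagonal blocks, while $\omega(w)$ has the same identity diagonal blocks and its sole nonzero off-diagonal entry $w$ sitting in the middle-right position.

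A direct block multiplication, using $\omega(w)^{-1} = \omega(-w)$, will give
\begin{equation*}
\omega(w)\, u(\xi)\, \omega(w)^{-1} = u\bigl(q(\xi),\ q_\perp(\xi) - w^T q(\xi)\bigr),
\end{equation*}
since the conjugation preserves both the diagonal and the $q(\xi)^T$ block and only modifies the top-right block from $q_\perp(\xi)^T$ to $q_\perp(\xi)^T - q(\xi)^T w$. Interpreting $u(q(e))$ via the canonical embedding $q(e) \mapsto (q(e), 0) \in \R^{n-1}$, the identity $\omega u(e) \omega^{-1} = u(q(e))$ for all $e \in \cE$ therefore becomes the system of linear equations
\begin{equation*}
w^T q(e) = q_\perp(e), \qquad e \in \cE.
\end{equation*}

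Since by hypothesis $\{q(e) : e \in \cE\}$ is a basis of $\R^{m_k}$, the assignment $q(e) \mapsto q_\perp(e)$ extends uniquely to a linear map $T \colon \R^{m_k} \to \R^{n-1-m_k}$; setting $w \in M(m_k \times (n-1-m_k), \R)$ to be the transpose of the matrix representing $T$ then produces an element $\omega := \omega(w) \in \frakW$ with the required property. No step here poses a genuine obstacle: the whole argument is linear algebra once the conjugation formula is in hand, and the normalization property $\omega(w) \in \nor(u(\R^{n-1}))$ implicit in the definition of $\frakW$ is exactly what that formula verifies.
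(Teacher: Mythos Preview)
Your proposal is correct and follows essentially the same approach as the paper: both arguments use the hypothesis that $q(\cE)$ is a basis of $\R^{m_k}$ to define the unique linear map sending each $q(e)$ to $q_\perp(e)$, and take $\omega=\omega(w)$ for the corresponding matrix $w$. The paper simply asserts that this $\omega(w)$ satisfies the required conjugation identity, whereas you spell out the block-matrix computation explicitly; the underlying idea is identical.
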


\begin{proof}
  We fix the standard basis of $\R^{n-1-m_k}$, and consider the basis
  $q(\cE)$ of $\R^{m_k}$. Then there exists a unique
  \begin{equation}
    \label{eq:6}
  w\in\End(\R^{m_k},\R^{n-1-m_k})\cong M(m_k\times(n-1-m_k),\R)  
  \end{equation}
  such that $w(q(e))=q_\perp(e)$ for all $e\in\cE$. Then $\omega(w)\in
  \frakW$ satisfies \eqref{eq:46}.
\end{proof}

\begin{lemma}
  \label{lema:wV0}
  For any $\omega\in\frakW$ the following holds:
  \begin{align}
    \label{eq:48}
    \omega(V^0(\cT)+V^-(\cT))&\subset V^0(\cT)+V^-(\cT)\\
    \label{eq:48b}
    x\in V^0(\cT)+V^-(\cT)&\Leftrightarrow \pi_0^{\cT}(x)=\pi_0^{\cT}(\omega
    x)\\
    \label{eq:49}
    \omega(V^0(\cT')+V^-(\cT'))&\subset V^0(\cT')+V^-(\cT')\\
\label{eq:49c}
    x\in V^0(\cT')+V^-(\cT'),\ \pi_{\vzero'}^{\cT'}(x)\neq 0 &\Leftrightarrow
    \pi_{\vzero'}^{\cT'}(\omega x)\neq 0.
  \end{align}
\end{lemma}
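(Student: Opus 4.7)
The plan is to write each $\omega=\omega(w)\in\frakW$ as $\omega=\exp(X)$, where in the defining representation $X=\omega(w)-I$ is supported on entries $E_{i,\ell}$ with $2\leq i\leq m_k+1$ and $m_k+2\leq \ell\leq n$ (and $X^2=0$ because its nonzero-row and nonzero-column indices are disjoint), and then to analyze the action of $\omega$ on the $\la{c}$-weight decomposition $V=\bigoplus_\vmu V_\vmu$. Using $\cA_j=m_j E_{1,1}-\sum_{i=2}^{m_j+1}E_{i,i}$, a direct weight computation shows that each $E_{i,\ell}$ appearing in $X$ is a $\la{c}$-weight vector of weight $\lambda=(\alpha_1,\dots,\alpha_k)$ with $\alpha_j\in\{0,-1\}$; moreover $\alpha_k=-1$ always (since $\ell>m_k+1$), while $\alpha_j=0$ for every $j\leq k-1$ exactly when $\ell\leq m_{k-1}+1$. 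Consequently every weight $\lambda$ of $X$ satisfies $\lambda\cdot\vt_i\leq -t_{i,k}\to-\infty$, whereas $\lambda'\cdot\vt_i'$ is either identically zero (when $\ell\leq m_{k-1}+1$) or tends to $-\infty$.

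From this, \eqref{eq:48}, the projection equality in \eqref{eq:48b}, and \eqref{eq:49} will follow immediately. For $v=\sum_\vmu v_\vmu\in V^0(\cT)+V^-(\cT)$, I would write $\omega v-v=\sum_{r\geq 1}X^rv/r!$ and observe that each $X^r v_\vmu$ lies in $\bigoplus_\eta V_{\vmu+\eta}$, where $\eta$ is a sum of $r\geq 1$ weights of $X$; since $\vmu\cdot\vt_i$ is bounded above and $\eta\cdot\vt_i\to -\infty$, one has $(\vmu+\eta)\cdot\vt_i\to -\infty$, so $\omega v-v\in V^-(\cT)$. This yields \eqref{eq:48} and $\pi_0^{\cT}(\omega v)=\pi_0^{\cT}(v)$. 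For \eqref{eq:49}, the analogous argument with $\cT'$ uses only $\eta\cdot\vt_i'\leq 0$, which preserves the bound-above condition.

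The substantive statement is \eqref{eq:49c}. I would split $X=X_{\vzero'}+X_-$, where $X_{\vzero'}$ collects the components $E_{i,\ell}$ with $\ell\in\{m_k+2,\dots,m_{k-1}+1\}$ (those with $\cT'$-weight $\vzero'$) and $X_-$ the remainder (with $\cT'$-weights tending to $-\infty$). The key identity to establish is
\begin{equation*}
\pi_{\vzero'}^{\cT'}(\omega v)=\exp(X_{\vzero'})\,\pi_{\vzero'}^{\cT'}(v)\qquad (v\in V^0(\cT')+V^-(\cT')).
\end{equation*}
Indeed, a $\vzero'$-weight contribution to $X^r v$ comes from a product $X_{\lambda_r}\cdots X_{\lambda_1}v_{\vmu'}$ with $\vmu'+\lambda_1+\dots+\lambda_r=\vzero'$; because each $\lambda_s$ has non-positive entries, $\vmu'=-(\lambda_1+\dots+\lambda_r)$ has non-negative entries, and any strictly positive entry would force $\vmu'\cdot\vt_i'\to+\infty$, making $v_{\vmu'}=0$ under the hypothesis. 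Hence each $\lambda_s=\vzero'$, and only $X_{\vzero'}$-factors survive, giving the displayed identity. Since $X_{\vzero'}$ preserves the finite-dimensional weight space $V_{\vzero'}$ and is nilpotent on it, $\exp(X_{\vzero'})$ is invertible there, yielding $\pi_{\vzero'}^{\cT'}(v)\neq 0\Leftrightarrow\pi_{\vzero'}^{\cT'}(\omega v)\neq 0$.

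The main obstacle is the last paragraph: ruling out cancellation of $\vzero'$-weight pieces arising from non-$\vzero'$-weight factors of $X$, which is exactly what the hypothesis $v\in V^0(\cT')+V^-(\cT')$ affords by excluding weight vectors of weight $-\eta$ for nonzero $\eta$ (these would lie in $V^+(\cT')$).
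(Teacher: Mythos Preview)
Your proof is correct and follows the same approach as the paper: both rest on the observation that the Lie algebra $\la{w}$ of $\frakW$ sits in the strictly negative eigenspaces of $\ad(\cA(\vt))$ for $\vt\in\cT$ and in the non-positive eigenspaces of each $\ad(\cA_\ell)$ for $\ell\leq k-1$. Your treatment of \eqref{eq:49c} via the explicit identity $\pi_{\vzero'}^{\cT'}(\omega v)=\exp(X_{\vzero'})\,\pi_{\vzero'}^{\cT'}(v)$ is more detailed than the paper's terse ``similarly'', and makes the invertibility (hence the biconditional) transparent; note also that for \eqref{eq:48b} only the forward implication is ever used in the paper, which is exactly what you establish.
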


\begin{proof}
  Let $\la{w}$ denote the Lie subalgebra of $\la{g}$ associated to
  $\frakW$. Then $\la{w}$ is contained in the sum of strictly negative
  eigenspaces of $\ad(\cA(\cT))$ acting on $\la{g}$. Therefore
  \eqref{eq:48} and \eqref{eq:48b} hold.

  Similarly \eqref{eq:49} and \eqref{eq:49c} hold, because $\la{w}$ is
  contained in the sum of zero eigen\-spaces and strictly negative
  eigen\-spaces of $\ad(\cA_{\ell})$ acting on $\la{g}$ for all $1\leq
  \ell\leq k-1$.
\end{proof}

One of the crucial ingredients in the proof of
Proposition~\ref{prop:rep2} is following `Basic Lemma-I'
\cite[Proposition~4.2]{Shah:SLn}.

\begin{proposition}
\label{prop:basiclemma1}
Let $m\geq 1$ and
$\cA=\diag(m,-1,\dots,-1)\in\la{SL}(m+1,\R)$. For
any $\vx\in\R^m$, let $u(\vx)=\psmat{1& \vx \\
  0 & I_{m}}\in\la{sl}(m+1,\R)$. Let $W$ be a finite dimensional
representation of $\SL(m+1,\R)$. Let $W^-$ (respectively $W^+$) be the
sum of strictly negative (respectively positive) eigenspaces of $\cA$
and $W^0$ be the null space of $\cA$. Let $\pi_0:W\to W^0$ denote the
projection parallel to $W^- \oplus W^+$.  Let $\bar\cB_1$ be an affine
basis of $\R^m$ and $w\in W$. Suppose that
\begin{equation}
  \label{eq:8}
  u(e)w \in W^0+W^-, \quad\forall e\in\bar\cB_1.
\end{equation}
Then 
\begin{equation}
  \label{eq:12}
  \pi_0(u(e)w)\neq 0, \quad\forall e\in\bar\cB_1.
\end{equation}
\qed
\end{proposition}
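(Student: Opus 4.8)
We indicate how one proves this (the argument is that of \cite{Shah:SLn}, Proposition~4.2; we record it as a plan). The plan is to reformulate the hypothesis as the existence of a limit, reduce to an irreducible $W$, read off a ``layer'' structure from a maximal parabolic, translate one basis point to get a clean reduced statement, and finally run an induction on $m$ whose base case is an $\SL(2,\R)$ computation. For the reformulation: decomposing $y\in W$ along the real eigenspaces of $\cA$ gives $\exp(t\cA)y=\sum_\mu e^{\mu t}y_\mu$, so $y\in W^0+W^-$ exactly when $\lim_{t\to\infty}\exp(t\cA)y$ exists, and then the limit equals $\pi_0(y)$. Thus \eqref{eq:8} says $\lim_{t\to\infty}\exp(t\cA)u(e)w$ exists for every $e\in\bar\cB_1$, and \eqref{eq:12} says these limits are nonzero — which we read as including the standing assumption $w\neq0$. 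Since $\cA\in\la{SL}(m+1,\R)$ and $u(\R^m)$ both preserve the decomposition of $W$ into irreducible $\SL(m+1,\R)$-submodules, and $W^0,W^\pm,\pi_0$ respect it, one reduces to $W$ irreducible and $w\neq0$.

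For $W$ irreducible of highest weight $\lambda$, only the simple root $\alpha_1=\epsilon_1-\epsilon_2$ pairs nontrivially with $\cA$ (with value $m+1$), so the $\cA$-eigenspaces group into finitely many layers $W=\bigoplus_{\ell\ge0}W^{[\ell]}$, where $W^{[\ell]}$ is the span of the weight spaces whose weight lies in $\lambda-\ell\alpha_1-\sum_{i\ge2}\Z_{\ge0}\alpha_i$, carrying $\cA$-eigenvalue $\langle\lambda,\cA\rangle-\ell(m+1)$; each $W^{[\ell]}$ is a module for the Levi $L\cong S(\GL_1\times\GL_m)$ of the maximal parabolic whose unipotent radical is $u(\R^m)$. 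Let $N_e\in\End(W)$ be the action of $\psmat{0&e\\0&0}$; these commute, $N_e\colon W^{[\ell]}\to W^{[\ell-1]}$, and $u(e)=\exp(N_e)$. Putting $W^{\le0}:=W^0+W^-=\bigoplus_{\ell\ge\ell_0}W^{[\ell]}$, with $\ell_0$ the least $\ell$ such that $\langle\lambda,\cA\rangle-\ell(m+1)\le0$, one has for $w=\sum_\ell w^{[\ell]}$ and any $e$
\begin{align*}
\pi_0(u(e)w)&=\sum_{\ell'\ge\ell_0}\tfrac1{(\ell'-\ell_0)!}\,N_e^{\ell'-\ell_0}w^{[\ell']},\\
u(e)w\in W^{\le0}&\iff\sum_{\ell'\ge\ell_0}\tfrac1{(\ell'-\ell)!}\,N_e^{\ell'-\ell}w^{[\ell']}=0\text{ in }W^{[\ell]}\ \ (0\le\ell<\ell_0).
\end{align*}

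If \eqref{eq:12} fails, say $\pi_0(u(e^*)w)=0$ for some $e^*\in\bar\cB_1$, replace $w$ by $u(e^*)w$ and $\bar\cB_1$ by $\bar\cB_1-e^*$ (legitimate, since $u$ is a homomorphism of $(\R^m,+)$, so \eqref{eq:8} is preserved); this reaches the reduced statement: $w\in W^-\setminus\{0\}$ and $u(v_j)w\in W^{\le0}$ for a linear basis $v_1,\dots,v_m$ of $\R^m$, and the goal is the contradiction $w=0$. Conjugating by an element of $L$ (which fixes $\cA$ and each layer) one may take $v_j=\epsilon_j$, so $N_{v_j}$ is the action of $E_{1,j+1}$. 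Now induct on $m$. For $m=1$ one is inside $\SL(2,\R)$: each layer is a weight line, and summand by summand the displayed system becomes $\sum_r a_r/(r+s)!=0$ for $1\le s\le\ell_0$, where $a_r$ is a nonzero multiple of the relevant coefficient of $w$; the square matrix $\bigl(1/(r+s)!\bigr)$ is nonsingular (a positive minor, since $e^x$ is totally positive), whence all $a_r=0$, i.e.\ $w=0$. For $m\ge2$ one peels off $v_m$: the $\la{SL}(2,\R)$-subalgebra $\la{s}=\langle E_{1,m+1},E_{m+1,1}\rangle$ satisfies $\cA=Z+\tfrac{m+1}2H$ with $H=[E_{1,m+1},E_{m+1,1}]$ and $Z$ centralizing $\la{s}$, so restricting to $\la{s}$ refines the layers and lets the $\SL(2,\R)$-analysis run in the direction $v_m$, while $v_1,\dots,v_{m-1}$ generate a sub-$\SL(m,\R)$ to which the inductive hypothesis applies on the appropriate $L$-submodules; together these force every $w^{[\ell']}=0$.

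The only genuinely hard step is this last welding of the $m$ directions: a single $v_j$ carries no information about the $\la{SL}(2,\R)$-submodules of $W$ that lie entirely in $W^-$, so the induction must be run with careful bookkeeping of the $L$-module structure of the layers $W^{[\ell]}$ and of how the commuting nilpotents $N_{v_1},\dots,N_{v_m}$ act across them — precisely the delicate argument carried out in \cite[Proposition~4.2]{Shah:SLn}, to which we refer for the full details.
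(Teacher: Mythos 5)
The paper never proves Proposition~\ref{prop:basiclemma1}: it is imported verbatim as ``Basic Lemma-I'' from \cite[Proposition~4.2]{Shah:SLn} and stated with no argument, so there is no internal proof to compare your write-up against, and a citation-level justification is exactly what the paper itself supplies. Your proposal is consistent with that treatment, and the parts you do carry out are sound: reading the hypothesis as existence of $\lim_{t\to\infty}\exp(t\cA)u(e)w$, reducing to an irreducible summand, translating by $u(e^*)$ to the reduced claim that $w\in W^-$ together with $u(v_j)w\in W^0+W^-$ for a linear basis $v_1,\dots,v_m$ forces $w=0$, and organizing $W$ into layers under the Levi of the parabolic whose unipotent radical is $u(\R^m)$. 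However, as you acknowledge yourself, the actual substance --- the induction on $m$ that welds the commuting nilpotents $N_{v_1},\dots,N_{v_m}$ across the layers and produces $w=0$ --- is deferred to \cite[Proposition~4.2]{Shah:SLn}; so as a self-contained proof the proposal has a genuine gap precisely at the step you flag, while as a reference to the source it does exactly what the paper does. (One small slip: identifying $\pi_0(u(e)w)$ with the component in the layer $\ell_0$ is only valid when that layer has $\cA$-eigenvalue exactly $0$; when $\lambda(\cA)\notin(m+1)\Z$ the summand has $W^0=0$ and your reduced statement is what carries that case.)
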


\begin{corollary}
  \label{cor:H}
  In Proposition~\ref{prop:basiclemma1}, suppose further that
  $\pi_0(u(e_0)w)$ is fixed by $\SL(m+1,\R)$ for some
  $e_0\in\bar\cB_1$. Then $w$ is fixed by $\SL(m+1,\R)$.
\end{corollary}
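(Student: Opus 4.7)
The plan is to reduce to Proposition~\ref{prop:basiclemma1} by splitting off the trivial isotypic component. Since $\SL(m+1,\R)$ is semisimple, the finite-dimensional representation $W$ decomposes as
\[
W = W^{\SL} \oplus W',
\]
where $W^{\SL}$ is the space of $\SL(m+1,\R)$-fixed vectors and $W'$ is a complementary $\SL(m+1,\R)$-submodule (the sum of all nontrivial isotypic components). Any vector in $W^{\SL}$ is in particular fixed by the one-parameter subgroup $\exp(t\cA)$, so $W^{\SL}\subset W^0$. Write $w=w_0+w'$ with $w_0\in W^{\SL}$ and $w'\in W'$. The goal is to show $w'=0$.

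First I will verify that $w'$ again satisfies the hypothesis of Proposition~\ref{prop:basiclemma1}, now viewed inside the subrepresentation $W'$. Since $u(e)w_0=w_0\in W^0$ for all $e$, the assumption $u(e)w\in W^0+W^-$ gives
\[
u(e)w' = u(e)w - w_0 \in W^0+W^-,\qquad \forall\,e\in\bar\cB_1.
\]
Because $W'$ is preserved by $\SL(m+1,\R)$ and by $\cA$, the decomposition $W=W^-\oplus W^0\oplus W^+$ restricts to $W'$, and hence $u(e)w'$ lies in $(W'\cap W^0)+(W'\cap W^-)$. Thus the hypothesis of Proposition~\ref{prop:basiclemma1} holds for $w'$ inside $W'$.

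Next I will exploit the extra fixedness assumption to conclude that $\pi_0(u(e_0)w')$ vanishes. Using $w_0\in W^0$ we have
\[
\pi_0(u(e_0)w) = w_0 + \pi_0(u(e_0)w'),
\]
and the left-hand side is assumed to lie in $W^{\SL}$. Therefore
\[
\pi_0(u(e_0)w') \in W^{\SL}\cap W' = \{0\},
\]
since $\pi_0$ preserves the decomposition and $\pi_0(u(e_0)w')\in W'$ while $\pi_0(u(e_0)w)-w_0\in W^{\SL}$.

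Finally I invoke Proposition~\ref{prop:basiclemma1} applied to $w'$ in the representation $W'$: if $w'\neq 0$, then $\pi_0(u(e_0)w')\neq 0$, contradicting the previous step. Hence $w'=0$ and $w=w_0\in W^{\SL}$, as required. The only real ingredient beyond Proposition~\ref{prop:basiclemma1} is complete reducibility for finite-dimensional representations of $\SL(m+1,\R)$, so there is no substantial obstacle; the whole argument is a clean reduction via the trivial/nontrivial isotypic splitting.
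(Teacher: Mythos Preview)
Your argument is correct, but it differs from the paper's. The paper does not invoke complete reducibility; instead it sets $w_0:=u(e_0)w-\pi_0(u(e_0)w)$ and translates the affine basis to $\bar\cB_2:=\{e-e_0:e\in\bar\cB_1\}$. Since $\pi_0(u(e_0)w)$ is $\SL(m+1,\R)$-fixed, one gets $u(e')w_0=u(e)w-\pi_0(u(e_0)w)\in W^0+W^-$ for every $e'\in\bar\cB_2$, and then Proposition~\ref{prop:basiclemma1} (applied with $0\in\bar\cB_2$) forces $\pi_0(w_0)\neq 0$ unless $w_0=0$; but $\pi_0(w_0)=0$ by construction, so $u(e_0)w=\pi_0(u(e_0)w)$ is fixed and hence so is $w$.

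Your isotypic splitting $W=W^{\SL}\oplus W'$ is a clean alternative: it avoids translating the basis and makes the vanishing $\pi_0(u(e_0)w')\in W^{\SL}\cap W'=\{0\}$ transparent. The cost is an appeal to complete reducibility for finite-dimensional $\SL(m+1,\R)$-modules, which the paper's more hands-on manipulation does not require. Either way the essential input is the same---Proposition~\ref{prop:basiclemma1} applied to the ``leftover'' after removing a fixed vector---so the two proofs are close cousins rather than genuinely different strategies.
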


\begin{proof}
  Let $w_0=u(e_0)w-\pi_0(u(e_0)w)$. Put
  $\bar\cB_2=\{e-e_0:\bar\cB_2\}$. Then $u(e')w_0\in V^0+V^-$ for all
  $e'\in\bar\cB_2$. Since $\bar\cB_2$ is an affine basis of $\R^m$, by
  Proposition~\ref{prop:basiclemma1}, if $w_0\neq 0$ then
  $\pi_0(w_0)\neq 0$, which contradicts the choice of $w_0$. Therefore
  $w_0=0$. Hence $u(e_0)w=\pi_0(u(e_0)w)$ is fixed by
  $\SL(m+1,\R)$. In turn, $w$ is fixed by $\SL(m+1,\R)$.
\end{proof}

\begin{proof}[Proof of Proposition~\ref{prop:rep2}]
  Let $e_0\in\cB$. We want to prove \eqref{eq:95} and \eqref{eq:120b}
  for $e_0$ in place of $e$. By replacing $v$ by $u(e_0)v$ and
  replacing every element $e\in \cB$ by $e-e_0$, without loss of
  generality we may assume that $e_0=0$. Let $\cB_1\subset \cB$
  containing $0$ such that $q(\cB_1)\setminus \{0\}$ is a basis of
  $\R^{m_k}$. By Lemma~\ref{lema:w} there exists $\omega\in \frakW$
  such that
\begin{equation}
  \label{eq:4}
  \omega u(e)\omega\inv =u(q(e)) \quad \forall e\in\cB_1.
\end{equation}

We put $v_0:=\omega v$. Then by \eqref{eq:29}, 
\eqref{eq:48} and \eqref{eq:4}, we have
\begin{equation}
  \label{eq:50}
  u(q(e))v_0=\omega(u(e)v)\subset V^0(\cT)+V^-(\cT),\quad \forall
  e\in\cB_1.
\end{equation}
By \eqref{eq:49c}, if $\pi_{\vzero'}^{\cT'}(v)\neq 0$ then
$\pi_{\vzero'}^{\cT'}(v_0)\neq 0$. By \eqref{eq:48b}, if
$\pi_{\vzero}^\cT(v_0)\neq 0$ then $\pi_{\vzero}^\cT(v)\neq 0$.
Therefore by \eqref{eq:49}, in order to prove \eqref{eq:95} and
\eqref{eq:120b}, it is enough to show that
\begin{gather}
  \label{eq:52}
  v_0=\omega v\in V^0(\cT')+V^-(\cT'),\quad\text{and}\\
    \label{eq:52c}
\text{if $\pi^{\cT'}_{\vzero}(v_0)\neq 0$ then $\pi_\vzero^{\cT}(v_0)\neq 0$.}
\end{gather}

We decompose $V$ into irreducible $(\la{c}+\la{h})$-submodules as
\begin{align}
  \label{eq:53}
  V=W_1\oplus \dots \oplus W_s.
\end{align}
For each $1\leq j\leq s$, let $P_j:V\to W_j$ denote the associate
projection, which is $(\la{c}+\la{h})$-equivariant. To show the
validity of \eqref{eq:52} and \eqref{eq:52c} it is enough to prove
that for every $1\leq j\leq s$,
\begin{gather}
  \label{eq:47}
  P_j(v_0)\in V^0(\cT')+V^-(\cT'),\quad \text{and}\\
  \label{eq:47c}
  \cA_k\cdot P_j(\pi^{\cT'}_{\vzero'}(v_0))=0.
\end{gather}
Now onwards we will fix $j$ as above, and put $W=W_j$ and $P=P_j$.

Without loss of generality we may suppose that $P(v_0)\neq 0$.  Let
\begin{align}
  \label{eq:57}
  \Delta(W,\cB_1)=\{\vnu\in\Delta(W):\pi_\vnu(P(u(q(e))v_0))\neq 0
  \quad \text{for some $e\in \cB_1$}\}.
\end{align}
We now recall that by \eqref{eq:50} we have
\begin{equation}
  \label{eq:99}
  \vnu\cdot \vt\leq 0,\quad \forall\vnu\in\Delta(W,\cB_1),\ \forall \vt\in\cT.
\end{equation}

Let $\vmu$ be the maximal element of $\Delta(W,\cB_1)\neq\emptyset$
with respect to the total preorder defined on $\Delta$; that is,
\begin{align}
  \label{eq:59}
  \vnu\cdot\vt \leq \vmu\cdot\vt,\quad \forall \vt\in\cT,\ \forall
  \vnu\in\Delta(W,\cB_1).
\end{align}
Therefore by Lemma~\ref{lema:order}, for all $\vnu\in\Delta(W,\cB_1)$
we have
\begin{align}
  \label{eq:58}
  \nu_k&\leq \mu_k \quad \text{and}\\
  \label{eq:58b}
  \vnu'\cdot\vt' & \leq \vmu'\cdot\vt, \quad \forall \vt\in\cT.
\end{align}
For $\lambda\in\R$, let
\begin{align}
  \label{eq:55}
  W_\lambda=\{w\in W:\cA_k w=\lambda w\}.
\end{align}
Then for any $\vnu=(\nu_1,\dots,\nu_k)\in\Delta$,
\begin{align}
  \label{eq:54}
  \pi_{\vnu}(P(u(q(e))v_0)\in W_{\nu_k}.
\end{align}

Hence by \eqref{eq:57}, \eqref{eq:58}, and \eqref{eq:54} we conclude
that
\begin{align}
  \label{eq:56}
  u(q(e))P(v_0)=P(u(q(e))v_0)\in \sum_{\lambda\leq\mu_k} W_{\lambda}, \quad
  \forall e\in\cB_1.
\end{align}

Let $H$ be the Lie subgroup of $G$ associated to the Lie algebra
$\la{H}$. Then $H$ is naturally isomorphic to $\SL(m_k+1,\R)$. We now
apply Proposition~\ref{prop:basiclemma1} in the case of $m=m_k$ and
$\cA=\cA_k$, $\bar\cB_1=q(\cB_1)$ and $w=P(v_0)$. Note that if $\mu_k<0$,
then by \eqref{eq:56} we have
\begin{equation}
  \label{eq:14}
  u(\bar e)w\in W^-, \quad \forall \bar e\in\bar\cB_1.
\end{equation}
Therefore the condition \eqref{eq:8} of
Proposition~\ref{prop:basiclemma1} is satisfied but its
conclusion \eqref{eq:12} fails to hold. Thus we conclude that
\begin{equation}
  \label{eq:15}
  \mu_k\geq 0.  
  \end{equation}

Therefore for any $\vt\in\cT$, by \eqref{eq:99}, we have
\begin{align}
  \label{eq:60}
  \vmu'\cdot\vt'=\vmu\cdot\vt - \mu_kt_k\leq 0.
\end{align}
Then from \eqref{eq:58b} we conclude that
\begin{align}
  \label{eq:62}
  \vnu'\cdot\vt'\leq \vmu'\cdot\vt'\leq 0, \quad\forall
  \vnu\in\Delta(W,\cB_1),\ \forall\vt'\in\cT'.
\end{align}
Now in view of \eqref{eq:57} this implies \eqref{eq:47}.

Next in order to prove \eqref{eq:47c}, suppose that $\vnu\in
\Delta(W,\cB_1)$ and $\vnu'=\vzero'$. We need to show that $\vnu=0$.

Let $i\in\N$. By \eqref{eq:62}, 
\begin{equation}
  \label{eq:64}
  0=\vnu'\cdot\vt'_i \leq \vmu'\cdot\vt'_i\leq 0.
\end{equation}
Therefore, by \eqref{eq:15} and since $\vt_{i,k}>0$,
\begin{equation}
  \label{eq:102}
  0\geq \vmu\cdot \vt_i=\vmu'\cdot\vt'_i+\mu_k t_{i,k}\geq 0,
\end{equation}
and hence
\begin{equation}
\label{eq:muk=0}
\mu_k=0.
\end{equation}
Therefore by \eqref{eq:42b} and \eqref{eq:64}, applied first to $\vnu$,
and to $\vmu$, we get
\begin{equation}
  \label{eq:5}
 f(\vt'_i)\nu_k=\vnu'\cdot\vt'_i-
 c(\vt_i')=-c(\vt_i')=f(\vt')\mu_k-\vmu'\cdot\vt'_i=0.
\end{equation}
Since $f(\vt'_i)>0$, we conclude that $\vnu=0$. 
\end{proof}

\subsection{Consequences of the Basic Lemma-II} 

For any $1\leq\ell\leq k$ and $\vt=(t_1,\dots,t_k)\in\R^k$ define
$\vt(\ell)=(t_1,\dots,t_\ell)$, and
$\cT(\ell)=(\vt_i(\ell))_{i=1}^\infty$, where
$\cT=(\vt_i)_{i=1}^\infty$ such that $\vt_i\in(\R_+)^k$, each
coordinate of $\vt_i$ tends to infinity as $i\to\infty$. By passing to
a subsequence we will further assume that
\begin{align}
  \label{eq:141}
V=V^-(\cT(\ell))\oplus
V^0(\cT(\ell))\oplus V^-(\cT(\ell))\quad (1\leq \ell\leq k).  
\end{align}

By applying Proposition~\ref{prop:rep2} repeatedly, we can decrease
$k$, and obtain the following: 

\begin{proposition}
  \label{prop:main2}
  Let $\cB$ be an affine basis of $\R^{n-1}$ and $v\in V$ be such that
\begin{equation}
    \label{eq:29b}
    u(e)v\in V^0(\cT)+V^-(\cT),\quad \forall\, e\in\cB.
  \end{equation}
  Then for any $1\leq\ell\leq k$, and any $e\in\cB$,
  \begin{gather}
    \label{eq:67}
    u(e)v\in V^0(\cT(\ell))+V^-(\cT(\ell)),\quad\text{and}\\
    \label{eq:67b}
    \text{if $\pi_{\vzero(\ell)}^{\cT(\ell)}(u(e)v)\neq 0$ then
      $\pi_\vzero^{\cT}(u(e)v)\neq 0$.}
  \end{gather}
\qed
\end{proposition}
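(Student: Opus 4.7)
The approach is a downward induction on $\ell$, from the trivial base case $\ell = k$ (where $\cT(k) = \cT$, so \eqref{eq:67} is exactly \eqref{eq:29b} and \eqref{eq:67b} is a tautology) down to $\ell = 1$. The inductive engine at each step is a single application of Proposition~\ref{prop:rep2} (the Basic Lemma-II), fed with the length-$(\ell+1)$ truncated sequence $\cT(\ell+1)$ in place of $\cT$. This is legal because $\ell + 1 \geq 2$, and crucially the primed sequence produced by Proposition~\ref{prop:rep2} is then exactly $\cT(\ell+1)' = \cT(\ell)$.

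For the inductive step, assume \eqref{eq:67} and \eqref{eq:67b} hold at level $\ell + 1$. In particular, for every $e \in \cB$,
\[
u(e)v \in V^0(\cT(\ell+1)) + V^-(\cT(\ell+1)),
\]
which is exactly the hypothesis \eqref{eq:29} of Proposition~\ref{prop:rep2} with $\cT$ replaced by $\cT(\ell+1)$. Its conclusion \eqref{eq:95} then delivers \eqref{eq:67} at level $\ell$, while its conclusion \eqref{eq:120b} delivers the implication
\[
\pi^{\cT(\ell)}_{\vzero(\ell)}(u(e)v) \neq 0 \implies \pi^{\cT(\ell+1)}_{\vzero(\ell+1)}(u(e)v) \neq 0.
\]
Composing this with the inductive implication $\pi^{\cT(\ell+1)}_{\vzero(\ell+1)}(u(e)v) \neq 0 \implies \pi^{\cT}_{\vzero}(u(e)v) \neq 0$ yields \eqref{eq:67b} at level $\ell$, closing the induction.

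The only bookkeeping point to verify at each step is that $\cT(\ell+1)$ satisfies the standing hypotheses of \S\ref{subsubsec:cTk}: each coordinate of $\vt_i(\ell+1)$ tends to infinity (inherited from $\cT$), and $V$ splits as $V^-(\cT(\ell+1)) \oplus V^0(\cT(\ell+1)) \oplus V^+(\cT(\ell+1))$, which is guaranteed by \eqref{eq:141} after the blanket subsequential passage. I do not expect a genuine obstacle here: all the delicate content, namely the positivity lemma (Lemma~\ref{lema:order}) together with the Basic Lemma-I (Proposition~\ref{prop:basiclemma1}) applied to the bottom $\SL(m_k+1,\R)$-block, has already been packaged inside Proposition~\ref{prop:rep2}. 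The present proposition is therefore a clean iteration, the only care being to keep primed/unprimed notation and the subsequence choices coherent across levels.
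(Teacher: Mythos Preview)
Your proposal is correct and is exactly the argument the paper intends: the text preceding the proposition says ``By applying Proposition~\ref{prop:rep2} repeatedly, we can decrease $k$,'' and the \qed indicates no further detail is offered. Your downward induction, applying Proposition~\ref{prop:rep2} with $\cT(\ell+1)$ in the role of $\cT$ so that $\cT(\ell+1)'=\cT(\ell)$, together with the check that \eqref{eq:141} supplies the requisite eigenspace decomposition at each level, is precisely the intended unpacking.
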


By specializing this result to the case of $\ell=1$, we deduce the
following generalization of Proposition~\ref{prop:basiclemma1}.

\begin{corollary}
  \label{cor:rep2-main}
  Let $\cB$ be an affine basis of $\R^{n-1}$ and $v\in V$ be such that
  \begin{equation}
    \label{eq:29c}
    u(e)v\in V^0(\cT)+V^-(\cT),\quad \forall\, e\in\cB.
  \end{equation}
  Then  
  \begin{equation}
    \label{eq:37b}
    \pi_\vzero^{\cT}(u(e)v)\neq 0, \quad \forall e\in\cB.
  \end{equation}
\end{corollary}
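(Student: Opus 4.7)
\medskip
\noindent\textbf{Proof proposal.} The plan is to apply Proposition~\ref{prop:main2} to reduce to the one-parameter setting, and then invoke the Basic Lemma-I (Proposition~\ref{prop:basiclemma1}) after a conjugation trick parallel to the one used in the proof of Proposition~\ref{prop:rep2}. Specializing Proposition~\ref{prop:main2} to $\ell=1$ with the hypothesis \eqref{eq:29c} gives, for every $e\in\cB$,
\begin{equation*}
u(e)v\in V^0(\cT(1))+V^-(\cT(1)),
\end{equation*}
and moreover it suffices to prove $\pi_{\vzero(1)}^{\cT(1)}(u(e)v)\neq 0$ for every $e\in\cB$ in order to deduce the desired conclusion $\pi_\vzero^{\cT}(u(e)v)\neq 0$. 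So the problem is reduced to the one-parameter diagonal $\exp(t\cA_1)$, $t\to\infty$.

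\medskip
\noindent Fix $e_0\in\cB$. Replacing $v$ by $u(e_0)v$ and $\cB$ by $\cB-e_0$, without loss of generality $e_0=\vzero\in\cB$, and we must show $\pi_{\vzero(1)}^{\cT(1)}(v)\neq 0$. Decompose $\R^{n-1}=\R^{m_1}\oplus\R^{n-1-m_1}$ with projections $q_1,q_{1,\perp}$, and choose $\cB_1\subset\cB$ containing $\vzero$ such that $q_1(\cB_1)\setminus\{\vzero\}$ is a basis of $\R^{m_1}$; this is possible because $\cB$ is an affine basis of $\R^{n-1}$ and hence its image under the linear map $q_1$ (after the shift placing $\vzero\in\cB$) spans $\R^{m_1}$.

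\medskip
\noindent Introduce the analog $\frakW_1$ of the subgroup $\frakW$ from \eqref{eq:45} but for $\cA_1$ in place of $\cA_k$; it consists of those unipotents in $\nor(u(\R^{n-1}))$ whose $\Ad(\exp(t\cA_1))$-conjugates tend to $e$ as $t\to\infty$, i.e.\ the unipotents encoding the off-diagonal entries $E_{i,j}$ with $2\leq i\leq m_1+1<j\leq n$. The analog of Lemma~\ref{lema:w} produces $\omega\in\frakW_1$ with $\omega u(e)\omega\inv =u(q_1(e))$ for all $e\in\cB_1$, and the analog of Lemma~\ref{lema:wV0} gives $\omega(V^0(\cT(1))+V^-(\cT(1)))\subset V^0(\cT(1))+V^-(\cT(1))$, with the additional property that $\pi_{\vzero(1)}^{\cT(1)}(\omega v)\neq 0$ if and only if $\pi_{\vzero(1)}^{\cT(1)}(v)\neq 0$. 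Setting $v_0=\omega v$, we obtain
\begin{equation*}
u(q_1(e))v_0=\omega(u(e)v)\in V^0(\cT(1))+V^-(\cT(1)),\qquad\forall e\in\cB_1.
\end{equation*}

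\medskip
\noindent Now $u(q_1(e))$ lies in the upper-triangular unipotent of the block $\SL(m_1+1,\R)\subset G$ whose diagonal part contains $\exp(t\cA_1)$, and $q_1(\cB_1)$ is an affine basis of $\R^{m_1}$. Thus Proposition~\ref{prop:basiclemma1} applies with $m=m_1$, $\cA=\cA_1$, $W=V$, and $\bar\cB_1=q_1(\cB_1)$, and yields $\pi_0(u(q_1(e))v_0)\neq 0$ for every $e\in\cB_1$. Taking $e=\vzero$ gives $\pi_{\vzero(1)}^{\cT(1)}(v_0)\neq 0$, hence $\pi_{\vzero(1)}^{\cT(1)}(v)\neq 0$ by the $\omega$-invariance noted above, which is what we needed. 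Since $e_0\in\cB$ was arbitrary, combining with Proposition~\ref{prop:main2} completes the proof. The main (technical, not conceptual) obstacle is verifying that $\frakW_1$ has the two properties used above; this is routine because $\la{w}_1$ lies inside both the zero and negative eigenspaces of $\ad(\cA_1)$ on $\la{g}$ by the same eigenvalue bookkeeping as in Lemma~\ref{lema:wV0}.
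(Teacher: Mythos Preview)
Your proposal is correct and follows essentially the same route as the paper's own proof: both reduce via Proposition~\ref{prop:main2} to the one-parameter case $\cT(1)$, pick $\cB_1\subset\cB$ whose projection to $\R^{m_1}$ is an affine basis, conjugate by an element of the $\cA_1$-analogue of $\frakW$ (using Lemma~\ref{lema:w}), apply the Basic Lemma~I (Proposition~\ref{prop:basiclemma1}), and then transfer back using Lemma~\ref{lema:wV0}. The only cosmetic difference is that you translate $e_0$ to $\vzero$ at the outset, whereas the paper carries $e_0$ along; one small inaccuracy in your final sentence is that $\la{w}_1$ actually lies in the \emph{strictly} negative eigenspace of $\ad(\cA_1)$ (not ``zero and negative''), which is precisely what is needed for the equality $\pi_{\vzero(1)}^{\cT(1)}(\omega v)=\pi_{\vzero(1)}^{\cT(1)}(v)$.
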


\begin{proof}
  By \eqref{eq:67} Proposition~\ref{prop:main2} applied to $\ell=1$ we
  get 
  \begin{equation}
    \label{eq:37}
    u(e)v\in V^0(\cT(1))+V^-(\cT(1)), \quad \forall e\in\cB. 
  \end{equation}
  Take any $e_0\in\cB$ and choose $\cB_1\subset\cB$ containing $e_0$
  such that $q(\cB_1)$ is an affine basis of $\R^{m_1}$. Let $\frakW$
  be defined as in \eqref{eq:45} associated to $\cA_1$ in place of
  $\cA_k$. Then by Lemma~\ref{lema:w} there exists $\omega\in\frakW$
  such that $\omega u(e-e_0)\omega\inv=u(q(e-e_0))$ for all
  $e\in\cB_1$. Therefore
  \begin{equation}
    \label{eq:16}
    u(q(e-e_0))(\omega u(e_0)v)\in V^0(\cT(1))+V^-(\cT(1)), \quad \forall e\in\cB_1. 
  \end{equation}
  Therefore by Proposition~\ref{prop:basiclemma1} applied to $m=m_1$,
  $\cA=\cA_1$, $\bar\cB_1=\{q(e-e_0):e\in\cB_1\}$ and $w=\omega
  u(e_0)v$, we get
\begin{equation}
  \label{eq:18}
  \pi_{\vzero(1)}^{\cT(1)}(\omega u(e_0)v)\neq 0.
\end{equation}
Therefore in view of \eqref{eq:48} of Lemma~\ref{lema:wV0} for
$\frakW$ and $\cT$ defined for the case of $k=1$, we get
\begin{equation}
  \label{eq:19}
  \pi_{\vzero(1)}^{\cT(1)}(u(e_0)v)=\pi_{\vzero(1)}^{\cT(1)}(\omega
  u(e_0)v)\neq 0.
\end{equation}
Now from \eqref{eq:67b} of Proposition~\ref{prop:main2} we conclude that
$\pi_{\vzero}^{\cT}(u(e_0)v)\neq 0$. 
\end{proof}

\begin{corollary}
  \label{cor:curve-rep2-main}
  Let $\cT\subset(\R_+)^k$ be as in \S\ref{subsubsec:cTk}. Assume that
  $m_1=n-1$. Let $\phi:I=[a,b]\to \R^{n-1}$ be a differentiable curve
  whose image is not contained in a proper affine subspace of
  $\R^{n-1}$. Let $v\in V$ be such that
  \begin{equation}
    \label{eq:106}
    u(\phi(s))v\in V^0(\cT)+V^-(\cT). 
  \end{equation}
  Then $v$ is $G$-fixed.
\end{corollary}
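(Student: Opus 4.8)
The plan is to reduce this statement about a curve to the already-established statement about affine bases (Corollary~\ref{cor:rep2-main}), and then upgrade the conclusion ``$\pi_\vzero^\cT(u(\phi(s))v)\neq 0$'' to the stronger conclusion ``$v$ is $G$-fixed'' using the analyticity/non-degeneracy of $\phi$ together with Corollary~\ref{cor:H}. The first observation is that since $\phi(I)$ is not contained in a proper affine subspace of $\R^{n-1}$, we can select $n$ parameters $s_0,\dots,s_{n-1}\in I$ such that $\cB:=\{\phi(s_0),\dots,\phi(s_{n-1})\}$ is an affine basis of $\R^{n-1}$; then \eqref{eq:106} gives exactly the hypothesis \eqref{eq:29c} of Corollary~\ref{cor:rep2-main}, so $\pi_\vzero^\cT(u(\phi(s_j))v)\neq 0$ for each $j$. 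This already shows $v\neq 0$ and that the ``$V^0$-part'' never vanishes along the chosen points.

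The main step will be to promote this to $G$-fixedness. Here is where the assumption $m_1=n-1$ is used: in that case the Lie algebra $\la{h}$ generated by the elements in \eqref{eq:17}, with $m_k$ replaced by $m_1=n-1$, is all of $\la{sl}(n,\R)$, so $H=\SL(n,\R)=G$ and $\cA_1=\diag(n-1,-1,\dots,-1)$ plays the role of $\cA$ in Proposition~\ref{prop:basiclemma1} and Corollary~\ref{cor:H}. Applying Proposition~\ref{prop:main2} with $\ell=1$ to the affine basis $\cB$ above, together with the argument in the proof of Corollary~\ref{cor:rep2-main} (conjugating by a suitable $\omega\in\frakW$ to replace $u(\phi(s_j))$ by $u(q(\phi(s_j)))$ and applying the Basic Lemma-I), we are placed precisely in the setting of Corollary~\ref{cor:H}. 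So it suffices to verify that $\pi_0(u(e_0)w)$ is $G$-fixed for one point $e_0$, where $w=\omega u(e_0)v$ and $\pi_0$ is the projection onto the $\cA_1$-null space.

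To get that extra input I would use the curve more seriously than a mere affine basis. The idea is to differentiate: since $\phi$ is analytic and $\phi(I)$ spans affinely, the vectors $\phi^{(1)}(s),\phi^{(2)}(s),\dots$ span $\R^{n-1}$ for $s$ outside a discrete set, and differentiating the containment $u(\phi(s))v\in V^0(\cT)+V^-(\cT)$ repeatedly in $s$ (the subspace $V^0(\cT)+V^-(\cT)$ is fixed, so all derivatives of the curve $s\mapsto u(\phi(s))v$ lie in it) produces, via the chain rule and the fact that $\tfrac{d}{ds}u(\phi(s)) = u(\phi(s))\,\mathrm{X}(\phi'(s))$ for the nilpotent $\mathrm{X}(\vxi):=\sum_j\xi_j E_{1,j+1}$, the relations $u(\phi(s))\,\mathrm{X}(\phi'(s))\cdots\mathrm{X}(\phi^{(r)}(s))\,v\in V^0(\cT)+V^-(\cT)$ for all $r$ (after the usual Leibniz bookkeeping). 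Feeding these into the Basic Lemma machinery at a generic point, and exploiting that the $\mathrm{X}(\cdot)$ together with $\cA_1$ and the lower-triangular part generate $\la g$, forces $\pi_0(u(e_0)w)$ to be annihilated by enough of $\la g$ to be $G$-fixed; then Corollary~\ref{cor:H} yields that $w$, hence $v$, is $G$-fixed.

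The hard part will be the very last reduction: squeezing $G$-fixedness of $\pi_0(u(e_0)w)$ out of the differentiated relations in a way that genuinely uses the non-degeneracy of the \emph{curve} and not just an affine basis. One clean route is to run the affine-basis argument not for a single basis but for a continuum of them — for each $s$ where $\phi'(s),\dots,\phi^{(n-1)}(s)$ are independent, the ``osculating'' data give an affine frame, and Corollary~\ref{cor:rep2-main} applies uniformly; letting $s$ vary and using analyticity to propagate a pointwise vanishing statement to an identity should pin down the $G$-fixed vector. I expect the bookkeeping in the Leibniz expansion and the verification that the resulting operators generate $\la g$ to be routine once set up, so the genuine obstacle is organizing the limiting/differentiation argument so that Corollary~\ref{cor:H} can be invoked exactly once, cleanly.
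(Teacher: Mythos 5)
Your first step---choosing an affine basis $\cB=\{\phi(s_j)\}$ from the curve and applying Proposition~\ref{prop:main2} with $\ell=1$ to conclude $u(\phi(s_0))v\in V^0(\cT(1))+V^-(\cT(1))$ for every $s_0\in I$---exactly matches the paper's proof. After that, however, you and the paper part ways, and this is where your proposal has a genuine gap.

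The paper's proof is just two lines: having reduced to a single $\cA_1$ (the case $\cT(1)$, i.e.\ $k=1$ and $m_1=n-1$), it simply invokes \cite[Cor.~4.6]{Shah:SLn}, which is precisely the one-parameter version of this corollary already proved in the earlier paper. So the content of the present corollary is entirely in the reduction from $\cT$ to $\cT(1)$ (Proposition~\ref{prop:main2}), and no new argument about curves is needed. You did not recognize that the one-parameter case of the statement is off-the-shelf, and instead tried to reprove it from scratch.

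Your proposed replacement for that citation is a differentiation argument, and it has problems. First, you appeal to higher-order derivatives $\phi^{(1)}(s),\phi^{(2)}(s),\dots$ spanning $\R^{n-1}$, ``since $\phi$ is analytic''---but the corollary as stated only assumes $\phi$ is differentiable, and Proposition~\ref{prop:m1} is likewise stated for differentiable curves. The machine built in this paper (see the proof of Proposition~\ref{prop:m1}) works only with $\dot\phi(s)$ and with limits of $a(t_i)u(\phi(s+\lambda e^{-m_1 t_i}))v$ along the one-parameter diagonal, not with a Taylor expansion. Second, and more importantly, you explicitly acknowledge that the crucial last reduction---extracting $G$-fixedness from the differentiated relations and a single invocation of Corollary~\ref{cor:H}---is not worked out (``I expect the bookkeeping\dots to be routine once set up''). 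That reduction is exactly the nontrivial part, and it is what \cite[Cor.~4.6]{Shah:SLn} supplies. As things stand your argument has a hole precisely where the paper has a citation.

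In short: the first half is correct and is the paper's argument; the second half should be a citation to \cite[Cor.~4.6]{Shah:SLn} rather than a freshly invented differentiation scheme, and the scheme you sketch both overuses smoothness and leaves the key step unfinished.
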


\begin{proof}
  Given any $s_0\in I$, there exists a set
  $\{s_1,\dots,s_{n-1}\}\subset I$ such that $\cB:=\{\phi(s_i):0\leq
  i\leq n-1\}$ is an affine basis of $\R^{n-1}$. Therefore by
  \eqref{eq:67} of 
  Proposition~\ref{prop:main2} applied to $\ell=1$ we get
  \begin{equation}
    \label{eq:108}
    u(\phi(s_0))v\in V^0(\cT(1))+V^-(\cT(1)).
  \end{equation}
  Therefore by \cite[Cor.~4.6]{Shah:SLn} $G$ fixes $v$.
\end{proof}

We will now generalize the above result for all $1\leq m_1\leq n-1$.

\begin{proposition}
  \label{prop:m1}
  Let $\cT\subset(\R_+)^k$ be as \S\ref{subsubsec:cTk}. Let
  $\phi:I=[a,b]\to \R^{n-1}$ be a differentiable curve which is not
  contained in a proper affine subspace of $\R^{n-1}$. Let $v\in V$ be
  such that
  \begin{equation}
    \label{eq:140}
    u(\phi(s))v\in V^0(\cT)+V^-(\cT), \quad\forall s\in I. 
  \end{equation} 
  Then $v$ is fixed by the subgroup $Q_{m_1+1}$.
\end{proposition}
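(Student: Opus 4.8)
The plan is to reduce the general case $1\le m_1\le n-1$ to the case $m_1=n-1$ already handled in Corollary~\ref{cor:curve-rep2-main}. Recall that $Q_{m_1+1}$ is generated by $u(\R^{n-1})$ together with the copy of $\SL(m_1+1,\R)$ sitting in the top-left corner. First I would observe that, by the same argument as in Corollary~\ref{cor:curve-rep2-main}, for any $s_0\in I$ one can pick $s_1,\dots,s_{n-1}$ so that $\cB=\{\phi(s_i):0\le i\le n-1\}$ is an affine basis of $\R^{n-1}$; then Proposition~\ref{prop:main2} applied with $\ell=1$ gives $u(\phi(s_0))v\in V^0(\cT(1))+V^-(\cT(1))$ for every $s_0\in I$, and moreover the implication \eqref{eq:67b} tells us that $\pi_{\vzero(1)}^{\cT(1)}(u(\phi(s_0))v)\ne 0$ would force $\pi_\vzero^{\cT}(u(\phi(s_0))v)\ne 0$. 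So it suffices to prove that $v$ is $Q_{m_1+1}$-fixed under the single hypothesis
\begin{equation*}
u(\phi(s))v\in V^0(\cT(1))+V^-(\cT(1)),\quad\forall s\in I,
\end{equation*}
i.e. we may as well assume $k=1$ and $\cA(\cT)=t\cA_1$ with $\cA_1=m_1E_{1,1}-\sum_{j=2}^{m_1+1}E_{j,j}$, and $t\to\infty$.

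Next I would bring the curve into a normal form adapted to $\cA_1$. Write $\R^{n-1}=\R^{m_1}\oplus\R^{n-1-m_1}$ with projections $q,q_\perp$, and use Lemma~\ref{lema:w}: choosing $s_1,\dots,s_{m_1}$ so that $q(\phi(s_i))-q(\phi(s_0))$ is a basis of $\R^{m_1}$ (possible since the image of $q\circ\phi$ is not in a proper affine subspace of $\R^{m_1}$ — this uses that $\phi$ avoids proper affine subspaces of $\R^{n-1}$), we get $\omega\in\frakW$ with $\omega u(e-e_0)\omega\inv=u(q(e-e_0))$ for $e$ in that finite set. This conjugation intertwines $u$ of points of $\R^{n-1}$ with $u$ of points of $\R^{m_1}$, and by Lemma~\ref{lema:wV0} it preserves $V^0(\cT(1))+V^-(\cT(1))$ and the nonvanishing of $\pi_{\vzero(1)}^{\cT(1)}$. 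The upshot is that $w:=\omega u(\phi(s_0))v$ satisfies the hypothesis of Corollary~\ref{cor:curve-rep2-main} for the $\SL(m_1+1,\R)$-action (with $m_1=n-1$ there replaced by $m_1$, and the relevant curve being $s\mapsto q(\phi(s))$, whose image is not in a proper affine subspace of $\R^{m_1}$). Hence $w$ — and therefore $u(\phi(s_0))v$, since $\omega$ and the $\SL(m_1+1,\R)$-action commute appropriately — is fixed by the top-left $\SL(m_1+1,\R)$. Letting $s_0$ vary over $I$, we conclude $u(\phi(s))v$ is $\SL(m_1+1,\R)$-fixed for all $s\in I$.

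Finally I would upgrade ``$u(\phi(s))v$ is $\SL(m_1+1,\R)$-fixed for all $s$'' to ``$v$ is $Q_{m_1+1}$-fixed.'' The trick is the one used in Corollary~\ref{cor:H}: fix $s_0$ and set $w_0=u(\phi(s_0))v-\pi_0(u(\phi(s_0))v)$ where $\pi_0$ is the $\cA_1$-null projection; since $u(\phi(s_0))v$ is already $\SL(m_1+1,\R)$-fixed it lies in $V^0$, so $w_0=0$, and more usefully one plays this off against nearby points $\phi(s)$: $u(\phi(s)-\phi(s_0))(u(\phi(s_0))v)$ is $\SL(m_1+1,\R)$-fixed for all $s$, hence $u(\phi(s)-\phi(s_0))$ fixes the $\SL(m_1+1,\R)$-fixed vector $u(\phi(s_0))v$; differentiating in $s$ at $s=s_0$ and using that the image of $\phi$ is not in a proper affine subspace shows that every $u(e)$, $e\in\R^{n-1}$, fixes $u(\phi(s_0))v$, and in particular (taking $s_0$ so that we can translate back) that $v$ is fixed by $u(\R^{n-1})$ and by $\SL(m_1+1,\R)$, which together generate $Q_{m_1+1}$. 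The main obstacle I anticipate is purely bookkeeping: keeping straight the several conjugations by $\omega\in\frakW$ and the reduction $\cT\rightsquigarrow\cT(1)$ so that the nonvanishing statements \eqref{eq:67b}, \eqref{eq:48}, \eqref{eq:49c} are invoked with the correct data; the genuinely new mathematical content has already been packaged into Proposition~\ref{prop:main2} and Corollary~\ref{cor:curve-rep2-main}.
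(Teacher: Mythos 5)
Your reduction to $k=1$ via Proposition~\ref{prop:main2} matches the paper, but the central step --- ``$w:=\omega u(\phi(s_0))v$ satisfies the hypothesis of Corollary~\ref{cor:curve-rep2-main} for the curve $s\mapsto q(\phi(s))$, hence is fixed by the top-left $\SL(m_1+1,\R)$'' --- has a genuine gap. Lemma~\ref{lema:w} produces one $\omega\in\frakW$ that satisfies $\omega u(e-e_0)\omega\inv=u(q(e-e_0))$ only at the finitely many points $e\in\cB_1$ used to build it; for a general $s\in I$ one has $\omega u(\phi(s)-e_0)\omega\inv=u(\eta(s))$ with $q(\eta(s))=q(\phi(s)-e_0)$ but $q_\perp(\eta(s))=q_\perp(\phi(s)-e_0)-w(q(\phi(s)-e_0))\neq 0$ in general. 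So you never verify $u(q(\phi(s))-q(e_0))w\in V^0+V^-$ for \emph{all} $s$, which is what an application of Corollary~\ref{cor:curve-rep2-main} to the subgroup $H\cong\SL(m_1+1,\R)$ and the projected curve would require (that corollary needs the whole curve to sit inside the abelian horospherical $u(\R^{m_1})$ of $H$). This is precisely the difficulty the paper's proof is built to overcome: from the finitely many adapted points it extracts only that $\pi_0^{\cA_1}(v_0)\neq 0$ and that the diagonal elements $D_\cC$ stabilize it (using \cite[Prop.2.3, Cor.2.4]{Shah:SLn}, which need point configurations not contained in a union of $m_1+1$ proper subspaces --- a strictly stronger input than an affine basis); it then runs the renormalization argument \eqref{eq:126}--\eqref{eq:132} along $a(t)=\exp(t\cA_1)$ to put $u(\R q(\dot\phi(s)))$ into the stabilizer of the projection, generates a parabolic subgroup of $H$, concludes $H\subset\Stab_G(\pi_0^{\cA_1}(v_0))$, and only then, via Corollary~\ref{cor:H}, gets $H\subset\Stab_G(\omega v_0)$; after that a further argument with the elements $z_e$ and $\omega_e$ and the contraction $\exp(t\cA_1)\omega_e\exp(-t\cA_1)\to e$ is needed to force $\frakW$ and $u(L^\perp)$ into $\Stab(v)$. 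None of this is replaceable by a one-line citation of Corollary~\ref{cor:curve-rep2-main}.

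Two secondary points. First, ``$\omega$ and the $\SL(m_1+1,\R)$-action commute appropriately'' is not true: $H$ normalizes but does not centralize $\frakW$, so $H$-fixedness of $\omega u(\phi(s_0))v$ does not transfer to $u(\phi(s_0))v$ without an argument (the paper deliberately works with $\omega v_0$ and never makes this transfer). Second, your closing inference --- from ``$u(\phi(s))v$ and $u(\phi(s_0))v$ are both $H$-fixed'' to ``$u(\phi(s)-\phi(s_0))$ fixes $u(\phi(s_0))v$'' --- is correct but not for the reason you give; it needs the observation that $H$-fixed vectors lie in the $\cA_1$-null space while $u(\xi)x-x$ lies in strictly positive $\cA_1$-weight spaces, plus the remark that $\{\xi\in\R^{n-1}:u(\xi)x=x\}$ is a linear subspace, so a spanning set of fixed translations suffices; with that, $\langle H,u(\R^{n-1})\rangle=Q_{m_1+1}$ does finish. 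These are repairable; the unproven $H$-fixedness of $u(\phi(s_0))v$ is the real missing content.
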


\begin{proof}
  Without loss of generality we may assume that $v\neq 0$.  In view of
  Proposition~\ref{prop:main2}
  \begin{equation}
    \label{eq:129}
    u(\phi(s))v\in V^0(\cT(1))+V^-(\cT(1)),\quad\forall s\in I.
  \end{equation}
  At this stage we will take $k=1$ and replace $\cT$ by $\cT(1)$.
  Let $L=\R^{m_1}$, $L^\perp=\R^{n-1-m_1}$, and $q:\R^{n-1}\to L$ and
  $q_\perp:\R^{n-1}\to L^\perp$ be the projections associated to the
  decomposition $\R^{n-1}=L\oplus L^\perp$.

  Take any $s_0\in I$ and put $e_0=\phi(s_0)$. Due to the hypothesis
  on $\phi$ there exists a finite set $\cE\subset \phi(I)$ containing
  $e_0$ such that $\{q(e)-q(e_0):e\in\cE\}$ is not contained in a
  union of $(m_1+1)$ proper subspaces of $L$.

  Let $\cB_1\subset\cE$ containing $e_0$ be such that the set
  $\{q(e)-q(e_0):e\in\cB_1\setminus\{e_0\}\}$ is a basis of $L$. Let
  $\frakW$ be defined as in \eqref{eq:45} for $k=1$. Then there exists
  $\omega\in \frakW$ such that $\omega u(e-e_0)\omega\inv =
  u(q(e-e_0))$ for all $e\in\cB_1$. We put $v_0=u(e_0)v$. Now by
  \eqref{eq:129} and \eqref{eq:48} for the case of $k=1$,
 \begin{align}
   \label{eq:121}
\begin{split}
   u(q(e-e_0))\omega v_0&=\omega u(e-e_0)v_0\\
                              &=\omega u(e)v 
                              \in V^0(\cA_1)\oplus V^-(\cA_1).
\end{split}
 \end{align}
 Let $H\cong\SL(m_1+1,\R)$ be the Lie group associated to the Lie
 algebra $\la{h}$ as defined through \eqref{eq:17} for $k=1$. Let
 $\cC=\{q(e-e_0):e\in\cB_1\setminus\{e_0\}\}$. Let $D_\cC$ consist of
 those $g\in Z_H(\exp(\R\cA_1))$ such that for each $e'\in\cC$, we
 have $gu(e')g\inv=u(\lambda e')$ for some $\lambda>0$;
 cf.~\cite[eq.(4.48)]{Shah:SLn}.  Then by \cite[Prop.2.3]{Shah:SLn},
 for all $e\in\cB_1$,
 \begin{align}
   \pi_0^{\cA_1}(\omega u(e)v)\neq 0 \quad\text{and}\quad 
   D_\cC\subset \Stab_G(\pi_0^{\cA_1}(\omega u(e)v)).
 \end{align}
 By \eqref{eq:48b}, applied to the case of $k=1$,
 $\pi_0^{\cA_1}(\omega v_0)=\pi_0^{\cA_1}(v_0)$, and hence
\begin{equation}
  \label{eq:63}
  \pi_0^{\cA_1}(v_0)\neq 0 \quad\text{and}\quad 
  D_\cC\subset \Stab_G(\pi_0^{\cA_1}(v_0)). 
\end{equation}
This equation holds for all choices of $\cB_1\subset\cE$ containing
$e_0$ such that $q(\cB_1)$ spans $L$; here it is important that
\eqref{eq:63} does not involve $\omega$. Therefore in view of the
hypothesis on $\cE$, by \cite[Cor.2.4]{Shah:SLn} we can deduce that
\begin{gather}
  \label{eq:125}
  \pi_0^{\cA_1}(v_0)\neq 0\quad\text{and}\\
  Z_H(\exp(\R\cA_1))\subset \Stab_G(\pi_0^{\cA_1}(v_0)).
\end{gather}

Next we want to show that
\begin{equation}
  \label{eq:126}
  u(\lambda q(\dot\phi(s)))\in \Stab_G(\pi^{\cA_1}_0(u(\phi(s)v))),
  \quad
  \forall s\in I \text{ and } \lambda\in\R.
\end{equation}

To see this, put $a(t)=\exp(t\cA_1)$ for all $t\in\R$. For
$\xi\in\R^{n-1}$, we define $a(t)\cdot\xi$ by the relation
$u(a(t)\cdot\xi)=a(t)u(\xi)a(t)\inv$. Then $a(t)\cdot
q(\xi)=e^{m_1t}q(\xi)$, $a(t)\cdot
q_\perp(\xi)=e^{(m_1-1)t}q_\perp(\xi)$, and hence
\begin{equation}
  \label{eq:131}
e^{-m_1t}a(t)\cdot\xi\stackrel{t\to\infty}{\longrightarrow} q(\xi).  
\end{equation}

Take $\lambda\in\R$ and $t_i\toinfty\infty$. Put $s_i=s+\lambda
e^{-m_1t_i}$. Then
\begin{equation}
  \label{eq:22}
\begin{split}
a(t_i)u(\phi(s_i))v
&=\pi_0^{\cA_1}(u(\phi(s_i))+a(t_i)\pi_-^{\cA_1}(u(\phi(s_i)))\\
&\toinfty \pi_0^{\cA_1}(u(\phi(s)).
\end{split}
\end{equation}
Also $a(t_i)u(\phi(s))v\toinfty \pi_0^{\cA_1}(u(\phi(s))v)$.  Since
\begin{equation}
\begin{split}
\phi(s_i)-\phi(s)
&=(s_i-s)\dot\phi(s)+O((s_i-s)^2)\\
&=e^{-m_1t_i}\dot\phi(s)+O(e^{-2m_1t_i}),
\end{split}
\end{equation}
by \eqref{eq:131},
\begin{equation}
  \label{eq:130}
  a_i\cdot(\phi(s_i)-\phi(s_0))\toinfty \lambda q(\dot\phi(s)).
\end{equation}
Therefore
\begin{equation}
\begin{split}
  \label{eq:132}
  a_iu(\phi(s_i))v&=a_iu(\phi(s_i)-\phi(s))u(\phi(s))v\\
  &=u(a_i\cdot(\phi(s_i)-\phi(s)))a_iu(\phi(s))v \\
&\toinfty u(\lambda q(\dot\phi(s)))\pi_0^{\cA_1}(u(\phi(s))v).
\end{split}
\end{equation}
Thus \eqref{eq:126} follows from \eqref{eq:22} and \eqref{eq:132}.

Due to our hypothesis on $\phi(s)$, we could choose $s_0\in I$ such
that $q(\dot\phi(s_0))\neq 0$. Let $Q$ denote the subgroup of $H$
generated by $Z_H(\exp(\R\cA_1))$ and $u(\R q(\dot\phi(s_0)))$. It may be
verified that $Q$ is a parabolic subgroup of $H$.  By \eqref{eq:125}
and \eqref{eq:126}, $Q\subset \Stab_G(\pi^{\cA_1}_0)$. Therefore we
conclude that
\begin{equation}
  \label{eq:127}
  H\subset \Stab_G(\pi^{\cA_1}_0(v_0)).
\end{equation}
Therefore $H$ fixes $\pi^{\cA_1}_0(\omega
v_0)=\pi^{\cA_1}_0(v_0)$. Now by \eqref{eq:121} and
Corollary~\ref{cor:H} we conclude that
\begin{equation}
  \label{eq:9}
  H\subset \Stab_G(\omega v_0).
\end{equation}

Let $\{e_1,\dots,e_{m_1}\}$ denote the standard basis of
$\R^{m_1}=L\subset \R^{n-1}$. Note that any $z\in Z_H(\R\cA_1)$ acts
on $e\in L$ via the relation $u(z\cdot e)=zu(e)z\inv$. This action of
$Z_H(\R\cA_1)$ surjects onto $\GL(m_1,\R)$. Therefore there exists
$z\in\Z_H(\R\cA_1)$ such that
\begin{equation}
  \label{eq:11}
\{z\cdot q(e-e_0):e\in\cB_1\}=\{0,e_1,\dots,e_{m_1}\}.   
\end{equation}

Since $\omega\in Q_{m_1+1}$, and  
\begin{equation}
  \label{eq:133}
z\omega(V^0(\cA_1)+V^-(\cA_1))=V^0(\cA_1)+V^-(\cA_1),  
\end{equation}
replacing $\phi(s)$ by the curve $\phi_1(s)$ such that 
\begin{equation}
  \label{eq:134}
  z\omega u(\phi(s)-e_0) \omega\inv z\inv  = u(\phi_1(s)),\quad\forall s\in I,
\end{equation}
and replacing $v$ by $z\omega v_0$, and $\cB_1$ by
$\{0,e_1,\dots,e_{m_1}\}$, without loss of generality we may assume
the following:
\begin{gather}
  \label{eq:135}
  H\cdot v=v, \quad\text{and}\quad 
  u(\phi(s))v\in V^0(\cA_1)+V^-(\cA_1), \quad \forall s\in I.
\end{gather}

Let $q_1:\R^{n-1}\to \R e_{m_1+1}$ denote the coordinate
projection. Put
\begin{align}
  \label{eq:142}
\cF=\{\phi(s):s\in I,\ q_1(\phi(s))\neq 0\}, \text{ then }   
L^\perp=\Span(\{q_\perp(e):e\in\cF\}).
\end{align}
Now take any $e\in \cF$. Then
there exists ${z_e}\in Z_H(\exp(\R\cA_1))$ such that
\begin{align}
  \label{eq:137}
  {z_e}\cdot q(e) = e_1, \quad\text{and}\quad {z_e}\cdot e_i=e_i\quad
  (2\leq i\leq m_k).
\end{align}
Therefore, since elements of $H$ and $u(L^\perp)$ commute, we have
\begin{equation}
  \label{eq:137b}
  {z_e}u(e){z_e}\inv = u(e_1+q_\perp(e)).
\end{equation}

Write $q_\perp(e)=(x_{m_1+1}(e),\dots,x_{n-1}(e))\in L^\perp\cong
\R^{n-1-m_1}$, and let
\begin{equation}
  \label{eq:136}
  \omega_e:= I_n + \sum_{j=m_1+1}^{n-1}
  x_j(e)E_{2,(1+j)}\in\frakW. 
\end{equation}
Then ${\omega_e}u(e_1+q_\perp(e)){\omega_e}\inv=u(e_1)$ and
${\omega_e}$ commutes with $u(e_i)$ for $i\geq 2$. Therefore by
\eqref{eq:135}, \eqref{eq:137} and \eqref{eq:137b},
\begin{align}
  \label{eq:138}
\begin{split}
V^0(\cA_1)+V^-(\cA_1)&\ni 
{\omega_e}{z_e}u(e)v=u(e_1)({\omega_e}v)\\
V^0(\cA_1)+V^-(\cA_1)&\ni
{\omega_e}{z_e}u(e_i)v=u(e_i)({\omega_e}v)\quad (2\leq i\leq m_k).
\end{split}
\end{align}
Therefore, since $\pi_0^{\cA}({\omega_e}v)=\pi_0^{\cA}(v)=v$ is fixed by
$H$, by Corollary~\ref{cor:H}
\begin{equation}
  \label{eq:139}
  H\subset\Stab_G({\omega_e}v).
\end{equation}
Now $H\cup ({\omega_e}H{\omega_e}\inv)\subset\Stab(v)$,
$\exp(\R\cA_1)\subset H$ and $\exp(t\cA_1){\omega_e}\exp(-tA_1)$
converges to the identity element as $t\to\infty$. Therefore
${\omega_e}\in\Stab(v)$ for all $e\in \cF$.

Let $Q$ be the subgroup generated by $H$ and
$\{\omega_{q_\perp(e)}={\omega_e}:e\in\cF\}$. Then $Q\subset
\Stab(v)$. By \eqref{eq:142} $Q$ is generated by $H$ and
$\omega_{L^\perp}:=\{\omega_x:x\in L^\perp\}$. Now in view of
\eqref{eq:136} it is easily verified that subgroup generated by
$Z_H(\exp(\cA_1))$ and $\omega_{L^\perp}$ contains $\frakW$. The group
generated by $\frakW$ and $u(L)$ contains $u(L^\perp)$. Therefore
\begin{equation}
  \label{eq:75}
  \Stab(v)\supset Q\supset H\cdot\frakW u(L^\perp)=Q_{m_1+1}.
\end{equation}
\end{proof}

We will need the following property of $Q_{m_1+1}$.

\begin{lemma} \label{lema:Q} Let $\vx\in\R^{m_1}\setminus \{0\}$. Then
  there is no closed proper normal subgroup of $Q_{m_1+1}$ containing
  $u(\vx)$.
\end{lemma}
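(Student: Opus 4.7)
The plan is to exploit the Levi decomposition $Q_{m_1+1} = \SL(m_1+1,\R) \ltimes U$, where $U$ is the abelian unipotent radical consisting of matrices $\psmat{I_{m_1+1} & w \\ 0 & I_{n-m_1-1}}$ with $w \in {\rm M}_{(m_1+1)\times(n-m_1-1)}(\R)$. Under this decomposition, $u(\vx)$ (interpreted via the embedding $\R^{m_1} \hookrightarrow \R^{n-1}$, $\vx \mapsto (\vx,0)$) sits in the Levi factor $\SL(m_1+1,\R)$ as a nontrivial unipotent element of the top row. Given a closed normal subgroup $N \trianglelefteq Q_{m_1+1}$ with $u(\vx) \in N$, I would aim to show $\Lie(N) = \Lie(Q_{m_1+1})$; since $Q_{m_1+1}$ is connected, this forces $N = Q_{m_1+1}$.

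The first step is to promote $u(\vx) \in N$ to the entire one-parameter subgroup $\{u(t\vx) : t \in \R\} \subset N$. For this, I would conjugate by the one-parameter subgroup $a_s = \exp(s \cdot \diag(-m_1,1,\ldots,1,0,\ldots,0)) \in \SL(m_1+1,\R) \subset Q_{m_1+1}$; a direct computation gives $a_s u(\vx) a_s^{-1} = u(e^{-(m_1+1)s}\vx)$, which covers $\{u(t\vx) : t>0\}$ as $s$ varies over $\R$. Combining with inverses gives all $t \in \R$. Because $N$ is closed, it is a Lie subgroup, so the tangent vector $X := \tfrac{d}{dt}|_{t=0}u(t\vx) = \sum_i x_i E_{1,1+i}$ — a nonzero nilpotent in $\la{sl}(m_1+1,\R)$ — belongs to $\la{n} := \Lie(N)$.

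Since $N$ is normal in $Q_{m_1+1}$, $\la{n}$ is an ideal in $\la{q}_{m_1+1}$. As $\la{sl}(m_1+1,\R)$ is simple (using $m_1 \geq 1$), the ideal of $\la{sl}(m_1+1,\R)$ generated by the nonzero element $X$ equals all of $\la{sl}(m_1+1,\R)$; hence $\la{sl}(m_1+1,\R) \subset \la{n}$. Next, the Lie algebra $\la{u}$ of $U$ decomposes as a direct sum of $n-m_1-1$ copies of the standard representation of $\la{sl}(m_1+1,\R)$ (one per column of $w$), which is nontrivial and irreducible. Consequently $[\la{sl}(m_1+1,\R), \la{u}] = \la{u}$, and by the ideal property $\la{u} \subset \la{n}$. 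Therefore $\la{n} = \la{q}_{m_1+1}$, and $N = Q_{m_1+1}$, contradicting properness.

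The one step that requires care is the first: passing from $u(\vx) \in N$ to the full real one-parameter subgroup $\{u(t\vx)\}$. This relies on having a dilating flow inside $Q_{m_1+1}$ that normalizes $u(\R^{m_1})$ and scales $\vx$ by arbitrary positive factors, together with the closedness hypothesis on $N$. Everything downstream is standard structure theory for the semidirect product, so I do not anticipate any further obstacle.
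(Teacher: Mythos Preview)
Your argument is correct. It differs from the paper's proof mainly in working at the Lie algebra level rather than the group level. The paper observes directly that $N\cap H$ (with $H\cong\SL(m_1+1,\R)$) is a closed normal subgroup of $H$ containing the infinite-order element $u(\vx)$, and invokes simplicity of $H$ as a Lie group with finite center to conclude $H\subset N$; this bypasses your conjugation step for extracting the one-parameter group. For the unipotent radical, the paper argues dynamically: for $\omega\in\frakW\cup u(L^\perp)$ one has $\exp(t\cA_1)\omega\exp(-t\cA_1)\to e$, so $\omega$ lies in the closure of the group generated by $\exp(\R\cA_1)$ and its $\omega$-conjugate, both of which sit in $N$ once $H\subset N$. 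Your route---using that $\la u$ splits as copies of the standard $\la{sl}(m_1+1,\R)$-module and hence $[\la{sl}(m_1+1,\R),\la u]=\la u$---is a cleaner, more structural substitute that would transfer verbatim to any Levi decomposition with no trivial summands in the radical. The paper's contraction argument, on the other hand, is closer in spirit to the dynamical techniques used elsewhere in the article.
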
 

\begin{proof}
  Let $N$ be a closed normal subgroup of $Q_{m_1+1}$ containing
  $u(\vx)$. Then $u(\vx)$ belongs to $N\cap H$, which is a normal
  subgroup of $H$. Since $H\cong\SL(m_1+1,\R)$ is a simple Lie group
  with finite center, it does not contain an infinite proper normal
  subgroup. Therefore $H\subset N$. We note that if $\omega\in
  \frakW\cup u(L^\perp)$, then the closure of the group generated by
  $\omega\exp(\R\cA_1)\omega\inv$ and $\exp(\R\cA_1)$ contains
  $\omega$. Since $N$ is normal in $Q_{m_1+1}$ and
  $\exp(\R\cA_1)\subset H$, we conclude that $\frakW\cup
  u(L^\perp)\subset N$. Therefore $N=Q_{m_1+1}$.
\end{proof}

\section{Invariance under a unipotent flow}

Our aim is to prove that the measure $\mu$ as in
Corollary~\ref{cor:mu-return} is an algebraic measure. For this
purpose, we will first `stably' modify the measures $\mu_i$, and then
show that a stable modification of $\mu$ is invariant under a
unipotent flow. This will allow us to use Ratner's theorem in our
investigation. 

\subsection{Stably twisted trajectory}
Let $q:\R^{n-1}\to \R^{m_k}$ denote the projection on the span of
first $m_k$-coordinates.  We suppose that $\phi:I=[a,b]\to \R^{n-1}$
satisfies the following condition for all $s\in I$,
\begin{equation}
  \label{eq:20}
q(\dot\phi(s))\neq 0.
\end{equation}
It may be noted that since $\phi$ is an analytic curve whose image is
not contained in a proper affine subspace of $\R^{n-1}$, $\phi$
satisfies \eqref{eq:20} at all but finitely many $s\in I$.
 
Fix $w_0\in\R^{m_k}\setminus \{0\}$, and define
\begin{equation}
  \label{eq:W}
  W=\{u(sw_0):s\in\R\}.
\end{equation}
Let $Z$ denote the centralizer of $\exp(\R\cA_k)$ in
$\SL(m_k+1,\R)$. Then $Z$ acts on $\R^{m_k}$ via the correspondence
$u(z\cdot v)=zu(v)z\inv$ for all $z\in Z$ and $v\in \R^{m_k}$.  This
action is transitive on $\R^{m_k}\setminus\{0\}$. By \eqref{eq:20}
there exists an analytic function $z:I\to Z$ such that
\begin{equation}
  \label{eq:zs}
  z(s)\cdot q(\dot\phi(s)))=w_0, \quad \forall s\in I,
\end{equation}
where $\dot\phi(s)=d\phi(s)/ds$.  In view of \S\ref{subsec:cT} and
\eqref{eq:a_tau}, we set
\begin{equation}
  \label{eq:25}
a_i:=a_{\bar\vtau_i}=\exp(\cA(\vt_i)),\quad \forall i\in\N.   
\end{equation}

Like \eqref{eq:7}, for any $i\in\N$, let $\lambda_i$ be the
probability measure on $\lml$ defined by
\begin{equation}
  \label{eq:lambda_i}
  \int_{\lml} f\,d\lambda_i:=\frac{1}{\abs{I}}\int_{s\in I}
  f(z(s)a_iu(\phi(s))x_i)\,ds,\quad\forall f\in\Cc(\lml).
\end{equation}
Since $\{a_{\vtau_i}a_i\inv:i\in\N\}$ and $z(I)$ are contained in
compact subsets of $Z$, from Theorem~\ref{thm:nondiv} we deduce that
there exists a probability measure $\lambda$ on $\lml$ such that,
after passing to a subsequence, $\lambda_i\to\lambda$ in the space of
probability measures on $\lml$ with respect to the weak-$\ast$
topology.

\begin{theorem} 
  \label{thm:W-invariant}
  The measure $\lambda$ is $W$-invariant.
\end{theorem}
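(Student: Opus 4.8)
The plan is to prove the $W$-invariance of $\lambda$ by a shearing (``banana'') argument: for each fixed $r\in\R$ I would compare the translated trajectory $u(rw_0)\cdot\bigl(z(s)a_iu(\phi(s))x_i\bigr)$ with the trajectory evaluated at a slightly shifted parameter, show that the discrepancy between the two tends to the identity uniformly in $s$, and then pass to the limit $i\to\infty$. The twisting function $z$ is present exactly so that the direction of the shear is the \emph{constant} vector $w_0$: since $u(z(s)\cdot v)=z(s)u(v)z(s)\inv$ and $z(s)\cdot q(\dot\phi(s))=w_0$ by \eqref{eq:zs}, one has the identity $u(rw_0)z(s)=z(s)u(rq(\dot\phi(s)))$ for all $s\in I$ and $r\in\R$.

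First I would compute the translated trajectory. Put $c_i=\sum_{\ell=1}^{k}(m_\ell+1)t_{i,\ell}$, and observe from $\cA(\vt_i)=\sum_\ell t_{i,\ell}\cA_\ell$ acting on the $E_{1,j}$ ($2\le j\le m_k+1$) that $a_iu(\xi)a_i\inv=u(e^{c_i}\xi)$ for $\xi\in\R^{m_k}$; note $c_i\ge (m_k+1)t_{i,k}\to\infty$. Using the identity above and the fact that $u$ is a homomorphism of $(\R^{n-1},+)$,
\begin{equation*}
  u(rw_0)\cdot\bigl(z(s)a_iu(\phi(s))x_i\bigr)=z(s)\,a_i\,u\bigl(\phi(s)+re^{-c_i}q(\dot\phi(s))\bigr)x_i .
\end{equation*}
Next I would compare with the trajectory at the shifted parameter $s'=s+re^{-c_i}$. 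Since $\phi$ is analytic (hence $C^2$ on a neighbourhood of $[a,b]$), $\phi(s')=\phi(s)+re^{-c_i}\dot\phi(s)+O(e^{-2c_i})$, so the two points are related by
\[
u(rw_0)\cdot\bigl(z(s)a_iu(\phi(s))x_i\bigr)=g_i(s)\cdot\bigl(z(s')a_iu(\phi(s'))x_i\bigr),\quad g_i(s)=z(s)\,u\!\bigl(a_i\cdot\eta_i(s)\bigr)\,z(s')\inv,
\]
where $\eta_i(s)=-re^{-c_i}q_\perp(\dot\phi(s))+O(e^{-2c_i})$, $q_\perp:\R^{n-1}\to\R^{n-1-m_k}$ is the complementary projection, and $u(a_i\cdot\eta)$ abbreviates $a_iu(\eta)a_i\inv$.

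The crux, and the step I expect to be the main obstacle, is the uniform estimate $g_i(s)\to e$ on $s\in I$. It rests on the following comparison of dilation rates of $a_i=\exp(\cA(\vt_i))$: on every $q$-coordinate direction $a_i$ expands by the factor $e^{c_i}$, whereas on every $q_\perp$-coordinate direction it expands by a factor at most $e^{c_i-t_{i,k}}$, the gap being at least $t_{i,k}\to\infty$. Hence $\norm{a_i\cdot\eta_i(s)}=O(e^{-t_{i,k}})+O(e^{-c_i})\to 0$ uniformly on $s\in I$ (the $O(e^{-2c_i})$ part of $\eta_i$ may point in a $q$-direction, but $e^{-2c_i}\cdot e^{c_i}\to 0$ anyway). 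Since $z$ is continuous on the compact interval $I$ and $s'-s=re^{-c_i}\to 0$, we also get $z(s)z(s')\inv\to e$ uniformly, and $z(I)$ is relatively compact; therefore $g_i(s)\to e$, hence $\rho(g_i(s))\to e$ in $L$, uniformly in $s$. This is precisely where hypothesis \eqref{eq:20} (which makes $z$ well defined) and $t_{i,k}\to\infty$ enter essentially.

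Finally I would assemble the approximation. Fix $f\in\Cc(\lml)$; it is uniformly continuous, so the uniform convergence $\rho(g_i(s))\to e$ gives $\frac1{\abs{I}}\int_I f\bigl(u(rw_0)z(s)a_iu(\phi(s))x_i\bigr)\,ds=\frac1{\abs{I}}\int_I f\bigl(z(s')a_iu(\phi(s'))x_i\bigr)\,ds+o(1)$. The substitution $s\mapsto s'=s+re^{-c_i}$ is measure preserving and changes only two boundary subintervals of total length $O(e^{-c_i})$, so the last integral equals $\int_{\lml}f\,d\lambda_i+o(1)$. Thus $\int_{\lml}f\,d\bigl((u(rw_0))_*\lambda_i\bigr)=\int_{\lml}f\,d\lambda_i+o(1)$ for each fixed $r$ and $f$; letting $i\to\infty$ along the subsequence defining $\lambda$ yields $(u(rw_0))_*\lambda=\lambda$. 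Since $r\in\R$ is arbitrary, $\lambda$ is $W$-invariant.
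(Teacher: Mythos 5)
Your proof is correct and is essentially the paper's argument: the same twist-and-shear computation $u(rw_0)z(s)a_iu(\phi(s))=z(s)a_iu(\phi(s)+re^{-c_i}q(\dot\phi(s)))$, with the two key estimates the paper isolates in \eqref{eq:111} and \eqref{eq:10} — the $q_\perp$-directions are expanded by a factor $o(e^{c_i})$ (your gap $t_{i,k}\to\infty$), and the second-order Taylor error is killed since $e^{c_i}e^{-2c_i}\to0$ — followed by the same reparametrization $s\mapsto s+re^{-c_i}$. The only difference is organizational: the paper partitions $I$ into $\sim\epsilon\abs{I}\alpha_i$ subintervals, freezes $z$ and $\dot\phi$ at the left endpoints and pays an $O(\epsilon)$ boundary cost, whereas you Taylor-expand at the moving point $s$ and make all errors vanish outright (modulo the easily handled boundary strip of length $O(e^{-c_i})$, where $\phi$ and $z$ extend analytically past $b$).
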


\begin{proof}
  Let $q_\perp:\R^{n-1}\to \R^{n-1-m_k}$ denote the projection on the
  last $(n-1-m_k)$-coordinates. Let $\alpha_i=\exp(\sum_{j=1}^{n-1} \tau'_{i,j} +
  \tau'_{i,m_k})$. Then for any $\xi\in \R^{n-1}$,
  \begin{equation}
    \label{eq:111}
    v=q(\xi)+q_\perp(\xi),\quad a_i\cdot q(\xi)=\alpha_iq(\xi)\quad\text{and}\quad \alpha_i\inv(a_i\cdot
    q_\perp(\xi))\toinfty 0.    
  \end{equation}

  Let $t\in\R$. Take any $\epsilon>0$. For $i\in\N$, let
  $N_i:=[\epsilon\abs{I}\alpha_i]\in\N$.  Then 
\begin{equation}
\label{eq:Ni}
\alpha_i/N_i\toinfty (\epsilon\abs{I})\inv \quad\text{and}\quad
\alpha_i/N_i^2\toinfty 0.
\end{equation} 
We partition $I=\cup_{r=1}^{N_i} I_r$, where $I_r=[s_r,s_{r+1}]$ and
$s_{r+1}-s_r=\abs{I}/N_i$.  Let
  \begin{equation}
    \label{eq:10}
\begin{split}
\psi_r(s)&:=\phi(s_r)+(s-s_0)\dot\phi(s_r), \quad \forall
s\in\R,\text{ then}\\
    \phi(s)&=\psi_r(s)+\varepsilon_r(s) \quad\text{and}
    \quad \varepsilon_r(s)=O(N_i^{-2}),\quad\forall s\in I_r. 
  \end{split}
\end{equation}
By \eqref{eq:111} and \eqref{eq:Ni}, $\sup_{s\in I_r}\norm{a_i\cdot
  \varepsilon_r(s)}\toinfty 0$. Since $z(\cdot)$ is continuous and
bounded, for all large $i$ and $1\leq r\leq N_i$,
  \begin{equation}
    \label{eq:118}
    \abs{f(z(s)a_iu(\phi(s))x_i)-f(z_ra_iu(\psi_r(s))x_i)}\leq
    \epsilon, \quad \forall s\in I_r,
  \end{equation}
  where $z_r=z(u(s_r))$, and the same holds for $f^{u({t} w_0)}$ in
  place of $f$, where $f^{u({t} w_0)}(x):=f(u({t} w_0)x)$ for all
  $x\in L/\Lambda$. Therefore
\begin{equation}
  \label{eq:122}
  \Abs{\int_{\lml}f(x)\,d\lambda_i(x) - \frac{1}{\abs{I}}\sum_{i=1}^{N_i} \int_{I_r}
    f(z_ra_iu(\psi_r(s))x_i)\,ds}\leq \epsilon,
\end{equation}
and the same for $f^{u({t} w_0)}$ in place of $f$.

Next, for any $s\in I_r$, by \eqref{eq:zs} and \eqref{eq:111},
\begin{align}
  \label{eq:116}
\begin{split}
u({t} w_0)z_ra_iu(\psi_r(s))&=z_ru({t}
q(\dot\phi(s_r)))a_iu(\psi_r(s))\\
&=z_ra_iu({t} \alpha_i\inv q(\dot\phi(s_r)))u(\psi_r(s))\\
&=z_ra_iu(-{t}\alpha_i\inv
q_\perp(\dot\phi(s_r)))u(\psi_r(s)+t\alpha\inv \dot\phi(s_r))\\
&=u(z_ra_i\cdot (-t\alpha_i\inv q_\perp(\dot\phi(s_r))))
z_ra_iu(\psi_r(s+{t}\alpha_i\inv)).
\end{split}
\end{align}
By \eqref{eq:111}, $\sup_{s\in I_r} \norm{z_ra_i\cdot (-t\alpha_i\inv
  q_\perp(\dot\phi(s_r)))}\toinfty 0$. Hence for large enough $i$,
\begin{equation}
  \label{eq:120}
  \sum_{r=1}^{N_i} \int_{I_r} \Abs{f(u({t} w_0)z_ra_i\psi_r(s)x_i) -
    f(z_ra_i\psi_r(s+t\alpha_i\inv)x_i)}\,ds\leq \epsilon \abs{I}. 
\end{equation}
Now by \eqref{eq:Ni}
\begin{equation}
\label{eq:123}
\begin{split}
  &\sum_{r=1}^{N_i} \Abs{\int_{I_r} f(z_ra_iu(\psi_r(s))x_i)\,ds -
    \int_{I_r} f(z_ra_iu(\psi_r(s+t\alpha_i\inv)x_i))\,ds}\\
  &\leq N_i(2\norm{f}_\infty{t}\alpha_i\inv)\toinfty
  2{t}\norm{f}_{\infty}\epsilon\abs{I}.
\end{split}
\end{equation}
For all large $i$, combining \eqref{eq:122}, \eqref{eq:120} and
\eqref{eq:123}:
\begin{equation}
  \label{eq:124}
\Abs{\int_{\lml} f(u({t} w_0)y)\,d\lambda_i(y)-\int_{\lml}
  f(y)\,d\lambda_i(y)}\leq (3+2t\norm{f}_\infty)\epsilon.
\end{equation}
Since $\epsilon>0$ is arbitrary, $\lambda$ is $u({t} w_0)$-invariant.
\end{proof}

\section{Ratner's theorem and dynamical behaviour of translated
  trajectories near singular sets} \label{sec:Ratner}

Next we will analyze the measure $\lambda$ using Ratner's description
of ergodic and invariant measures for unipotent flows.

For $\sH$ be as defined in \S\ref{subsec:sH} and $W$ be an
$\Ad$-unipotent one-parameter subgroup of $G$. For $H\in\sH$, define
\begin{align}
  N(H,W)=\{g\in G: g\inv Wg\subset H\} \quad\text{and}\quad
  S(H,W)=\tcup{\substack{F\in\sH\\F\subsetneq H}}N(F,W).
\end{align}

Let $\pi:L\to \lml$ denote the natural quotient map. By Ratner's
theorem~\cite{R:measure}, as explained in \cite[Theorem
2.2]{Moz+Shah:limit}:
\begin{theorem}[Ratner]
  \label{thm:Ratner}
  Given a $W$-invariant probability measure $\lambda$ on $\lml$, there
  exists $H\in\sH$ such that
  \begin{equation}
    \label{eq:lambda-H} 
    \lambda(\pi(N(H,W))>0 \quad\textrm{and} \quad \lambda(\pi(S(H,W))=0.
  \end{equation}
  And almost all $W$-ergodic components of the restriction of
  $\lambda$ to $\pi(N(H,W))$ are of the form $g\mu_H$, where $g\in
  N(H,W)\setminus S(H,W)$ and $\mu_H$ is a finite $H$-invariant
  measure on $\pi(H)\cong H/H\cap\Lambda$.

  In particular if $H$ is a normal subgroup of $L$ then $\lambda$ is
  $H$-invariant.\qed
\end{theorem}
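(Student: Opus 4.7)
The plan is to deduce this from Ratner's measure classification theorem~\cite{R:measure}, which asserts that every $W$-invariant, $W$-ergodic probability measure on $\lml$ is homogeneous: it equals $g\cdot\mu_F$ for some $F\in\sH$ and some $g\in N(F,W)$, where $\mu_F$ is the $F$-invariant probability measure supported on the closed orbit $\pi(gF)$. First I would apply the ergodic decomposition theorem to $\lambda$, writing $\lambda=\int\lambda_\omega\,d\nu(\omega)$ with $\nu$-a.e.\ $\lambda_\omega$ being $W$-ergodic, and then associate to each such $\omega$ a pair $(F_\omega,g_\omega)$ with $F_\omega\in\sH$, $g_\omega\in N(F_\omega,W)$, and $\lambda_\omega=g_\omega\mu_{F_\omega}$.

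Since $\sH$ is countable, the collection $\{F_\omega\}$ takes at most countably many values. I would then select $H\in\sH$ of minimal dimension (and, within that dimension, minimal in a fixed enumeration of $\sH$) such that $\nu(\Sigma_H)>0$, where $\Sigma_H:=\{\omega:F_\omega=H\}$. For every $\omega\in\Sigma_H$ the component $\lambda_\omega$ is supported in $\pi(N(H,W))$, which yields $\lambda(\pi(N(H,W)))>0$. The complementary vanishing $\lambda(\pi(S(H,W)))=0$ would follow from two observations: (i) by the minimal-dimension choice of $H$, no positive $\nu$-mass of ergodic components is associated to any proper $F\subsetneq H$ in $\sH$; and (ii) a homogeneous measure $g\mu_H$ with $g\in N(H,W)\setminus S(H,W)$ assigns zero mass to every proper closed homogeneous subset of $\pi(gH)$, so the portion of $\lambda_\omega$ sitting inside $\pi(S(H,W))$ is forced to be zero.

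The main obstacle I anticipate is formalising observation (ii): one must verify that $\pi(S(H,W))\cap \pi(gH)$ is a countable union of proper closed homogeneous subsets of $\pi(gH)$, using the definition $S(H,W)=\bigcup_{F\subsetneq H,\,F\in\sH} N(F,W)$ together with the countability of $\sH$ and the Dani--Margulis analysis of singular sets (in particular, the discreteness of $\Lambda\cdot p_F$ from Proposition~\ref{prop:discrete}). Finally, for the last sentence, suppose $H$ is normal in $L$. Since $W\subset H$ (inherited from the fact that $W$ is contained in each $F_\omega=H$ for $\omega\in\Sigma_H$), normality gives $g\inv Wg\subset g\inv Hg=H$ for every $g\in L$, so $N(H,W)=L$ and the whole measure $\lambda$ is carried on $\pi(N(H,W))$. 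Each ergodic component is then of the form $g\mu_H$ with $g\in L$ and is invariant under $gHg\inv=H$; integrating over the ergodic decomposition yields $H$-invariance of $\lambda$, completing the argument.
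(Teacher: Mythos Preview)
The paper does not supply its own proof of this statement: the \qed\ marks it as a citation of Ratner's classification~\cite{R:measure} in the formulation of \cite[Theorem~2.2]{Moz+Shah:limit}. Your sketch is essentially the standard Mozes--Shah deduction from Ratner's measure classification, so in that sense it matches exactly what the paper invokes.

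One point to tighten: your items (i) and (ii) together handle only ergodic components $\lambda_\omega$ with $F_\omega\subseteq H$, whereas the conclusion $\lambda(\pi(S(H,W)))=0$ must be checked for \emph{all} components, including those with $F_\omega\not\subseteq H$. The missing step is that if some $\lambda_\omega$ charged $\pi(N(F',W))$ for an $F'\subsetneq H$ in $\sH$, then by $W$-invariance of $\pi(N(F',W))$ and $W$-ergodicity of $\lambda_\omega$ its support $\pi(g_\omega F_\omega)$ would lie inside $\pi(N(F',W))$; a generic point of the support then sits in some closed orbit $\pi(g'F')$, forcing $\dim F_\omega\le\dim F'<\dim H$ and contradicting your minimal-dimension choice. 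Also, in the final paragraph, the inclusion $W\subset H$ is not literally ``inherited from $W\subset F_\omega$'': what one actually has is $g_\omega^{-1}Wg_\omega\subset F_\omega=H$, and normality of $H$ then gives $W\subset g_\omega H g_\omega^{-1}=H$. With these two adjustments your argument is correct.
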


\subsection{Algebraic criterion for zero limit measure on singular
  sets}

Similar to the nondivergence criterion given by
Proposition~\ref{prop:return}, the next result provides a criterion
for `non-accumulation on singular sets' in terms of linear actions of
groups; it is also referred by `linearization technique'. Let the
notation be as in \S\ref{subsec:sH}. Let
$w_0\in\Lie(W)\setminus\{0\}$. Let $\sA=\{v\in V: v\wedge w_0=0\}$. Then
\begin{equation}
  \label{eq:A}
  N(H,W)=\{g\in L: g\cdot p_H\in \sA\}.
\end{equation}

The following linearization statement from \cite[Prop.~4.4]{Shah:son1}
uses the fact that $\phi$ is analytic;
cf.~\cite{R:uniform,Dani+Mar:limit,Moz+Shah:limit}.

\begin{proposition}
  \label{prop:main3}
  Let $C$ be any compact subset of $N(H,W)\setminus S(H,W)$. Let
  $\epsilon>0$ be given. Then there exists a compact set
  $\cD\subset\sA$ such that given any neighbourhood $\Phi$ of $\cD$ in
  $V$, there exists a neighbourhood $\cO$ of $\pi(C)$ in $\lml$
  such that for any $h_1,h_2\in L$, and a subinterval $J\subset I$,
  one of the following holds:
  \begin{enumerate}
  \item[a)] There exists $\gamma\in\Lambda$ such that
    $(h_1z(s)u(\phi(s))h_2\gamma)p_H\in\Phi$, $\forall s\in J$.
  \item[b)] $\abs{\{s\in J: \pi(h_1z(s)u(\phi(s))h_2)\in\cO\}}\leq
    \epsilon\abs{J}$.
  \end{enumerate}
\qed
\end{proposition}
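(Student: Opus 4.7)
The plan is to adapt the Dani--Margulis linearization technique, suitably modified to accommodate the bounded twisting factor $z(s)$ and the analytic curve $\phi(s)$. First, I would package the compact set $\cD$ as follows. Since $C \subset N(H,W)\setminus S(H,W)$, and by an analogue of Proposition~\ref{prop:discrete} the orbit $\Lambda \cdot p_H$ is a discrete subset of $V$, there is a compact subset $\cD\subset\sA = \{v\in V : v\wedge w_0 = 0\}$ with $C\cdot p_H \subset \cD$ and satisfying the separation property that neighborhoods of $\cD$ can be shrunk without meeting $\Lambda$-translates of $p_F\cdot p_H$ for proper $F\subsetneq H$, $F\in\sH$. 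This is the standard mechanism by which $S(H,W)$ is excluded.

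Second, given any neighborhood $\Phi$ of $\cD$, I would choose the neighborhood $\cO$ of $\pi(C)$ small enough so that whenever $\pi(g)\in\cO$, there exists a $\gamma\in\Lambda$ (unique by the discreteness of $\Lambda p_H$ and shrinking $\Phi$ if necessary) with $g\gamma p_H\in\Phi$. The key analytic step is then to compare two situations for a fixed subinterval $J\subset I$ and fixed $h_1,h_2\in L$: either most of $J$ avoids $\cO$, which is option~(b); or a large fraction of $s\in J$ produces such a $\gamma_s \in \Lambda$. In the latter case one must show a single $\gamma$ works uniformly on $J$, which is conclusion~(a).

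The main technical obstacle is precisely this rigidity step. Here I would exploit that the map
\[
s\mapsto (h_1 z(s) u(\phi(s)) h_2 \gamma)\, p_H
\]
has coordinates given by analytic functions of $s$, because $\phi$ and $z$ are analytic, while $u(\cdot)$ and the $L$-action on $V$ are polynomial. An analytic (or $(C,\alpha)$-good) family has the property that if its image lies in an $r$-neighborhood of $\cD$ on more than an $\epsilon$-fraction of $J$, then on all of $J$ it lies in a $C r^{\alpha}$-neighborhood of $\cD$, with constants depending only on the degree/complexity of the coordinate functions and on the bounded data $z(I)$, $\phi(I)$. Combined with the discreteness that prevents $\gamma$ from jumping to a different $\Lambda$-translate while staying in a small neighborhood of $\cD$, this forces a single $\gamma$ to handle all of $J$, yielding~(a).

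Finally, I would pick $\cO$ and $\Phi$ by a compactness/covering argument on $C$: cover $\pi(C)$ by finitely many small open sets arising from local finiteness of $\Lambda p_H$ near $\cD$, and choose $\cO$ to be a uniform refinement over this cover, with $\Phi$ shrunk accordingly. The delicate balance is that $\Phi$ must be small enough to guarantee uniqueness of $\gamma$ and to exclude $S(H,W)$-contributions, while $\cO$ must be small enough that the $(C,\alpha)$-good estimate with constant depending on $\Phi, \cD$ still yields a usable bound; both depend only on $C$, $\epsilon$, and the analytic data, not on $h_1,h_2,J$. The boundedness of $z(I)$ is what allows $z(s)$ to be absorbed into the ambient polynomial dynamics without destroying the uniformity.
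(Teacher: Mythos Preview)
The paper does not supply its own proof of this proposition: it is stated with a terminal \qed and is explicitly imported from \cite[Prop.~4.4]{Shah:son1}, the preceding sentence noting only that analyticity of $\phi$ is used (with further pointers to \cite{R:uniform,Dani+Mar:limit,Moz+Shah:limit}). There is therefore no in-paper argument to compare against.

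Your outline is the standard Dani--Margulis linearization scheme that underlies the cited result, and the ingredients you name---discreteness of $\Lambda\cdot p_H$, the self-intersection structure of $N(H,W)\setminus S(H,W)$ controlling how many $\gamma$'s can be relevant, the $(C,\alpha)$-good property of real-analytic functions, and absorption of the bounded twist $z(s)$ into the analytic data---are exactly the right ones. One quantitative slip: in your $(C,\alpha)$-good step, the contrapositive of the standard estimate gives that if the curve lies in an $r$-neighborhood of $\cD$ on more than an $\epsilon$-fraction of $J$, then its supremum over $J$ is at most $r\,(C/\epsilon)^{1/\alpha}$, not $Cr^{\alpha}$. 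The logical flow is unaffected, but the exponent matters if one ever tracks constants. With that correction your sketch matches the argument in the cited references.
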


Just as in the Proof of Theorem~\ref{thm:nondiv} we will apply this
criterion to obtain an algebraic condition leading to the hypothesis
of Corollary~\ref{cor:rep2-main}.

\begin{theorem}
  \label{thm:lambda}
  Suppose that there is no proper closed subgroup $H$ of $L$
  containing $\rho(Q_{m_1+1})$ such that the orbit $Hx_0$ is closed
  and admits a finite $H$-invariant measure. Let
  $\{\lambda_i\}_{i=1}^\infty$ be the sequence of measures as defined
  by \eqref{eq:lambda_i}. Then $\lambda_i\toinfty\lambda$ in the space
  of probability measures on $\lml$ and $\lambda$ is $L$-invariant.
\end{theorem}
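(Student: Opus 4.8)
The plan is to run the standard linearization-plus-Ratner argument, using Theorem~\ref{thm:W-invariant} as the input and Corollary~\ref{cor:rep2-main} (via Proposition~\ref{prop:m1} and Lemma~\ref{lema:Q}) as the tool that rules out escape into singular sets. First I would invoke Corollary~\ref{cor:mu-return} to pass to a subsequence along which $\lambda_i\to\lambda$ for some probability measure $\lambda$ on $\lml$; by Theorem~\ref{thm:W-invariant} this $\lambda$ is $W$-invariant, where $W=\{u(sw_0):s\in\R\}$ is an $\Ad$-unipotent one-parameter subgroup. If $\lambda$ were the $L$-invariant measure $\mu_L$ we would be done, so assume not. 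Apply Ratner's theorem (Theorem~\ref{thm:Ratner}): there exists $H\in\sH$ with $\lambda(\pi(N(H,W)))>0$ and $\lambda(\pi(S(H,W)))=0$, and we may take $H$ not normal in $L$ (if every such $H$ forced $\lambda$ to be $L$-invariant we'd be done; more precisely, among all $H\in\sH$ with $\lambda(\pi(N(H,W)))>0$ choose a minimal one, so that $\lambda(\pi(S(H,W)))=0$). The goal is to derive a contradiction with the hypothesis that no proper closed subgroup of $L$ containing $\rho(Q_{m_1+1})$ has a closed orbit with finite invariant measure.

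The core of the argument is to show that $\rho(Q_{m_1+1})$ must, after conjugation by some $g_0\in L$, be contained in $H$, which would make $g_0 H g_0\inv$ (or rather the relevant conjugate) such a forbidden subgroup. To get there, I would use Proposition~\ref{prop:main3} with a compact set $C\subset N(H,W)\setminus S(H,W)$ chosen so that $\pi(C)$ carries positive $\lambda$-mass, and the test function supported near $\pi(C)$. Since $\lambda_i\to\lambda$ and $\lambda(\pi(S(H,W)))=0$, for large $i$ a definite proportion of the parameter interval $I$ has $\pi(z(s)a_iu(\phi(s))x_i)$ near $\pi(C)$; hence option~(b) of Proposition~\ref{prop:main3} fails for $J=I$ (after possibly shrinking to a subinterval and using that $\phi$ restricted to any subinterval still spans affinely, via the analyticity/Lebesgue-density argument as in the proof of Proposition~\ref{prop:E}). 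So option~(a) holds: there exist $\gamma_i\in\Lambda$ with $(z(s)a_iu(\phi(s))g_0\gamma_i)p_H\in\Phi$ for all $s\in I$, where $x_0=g_0\Lambda$. Normalizing $v_i:=g_0\gamma_ip_H/\norm{g_0\gamma_i p_H}$ — using Proposition~\ref{prop:discrete} to bound $\norm{g_0\gamma_ip_H}$ below — and passing to a subsequence $v_i\to v\neq 0$, the boundedness of $z(I)$ and of $\{a_{\vtau_i}a_i\inv\}$ in $Z$ shows $\sup_{s\in I}\norm{a_{\vtau_i}u(\phi(s))v_i}$ stays bounded, hence by the eigenspace decomposition \eqref{eq:117} and Lemma~\ref{lema:sT} we get $u(\phi(s))v\in V^0_{\sT}+V^-_{\sT}=V^0(\cT)+V^-(\cT)$ for all $s\in I$.

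Now Proposition~\ref{prop:m1} applies: $v$ is fixed by $Q_{m_1+1}$, i.e.\ $\rho(Q_{m_1+1})$ fixes $v=\lim v_i$, which lies in the closure of $\R_{>0}\cdot L\cdot p_H$. Since $v$ is a limit of vectors $c_i\,\rho(g_0\gamma_i)p_H$ with $c_i>0$, and $\Stab_L(p_H)=\noroneL(H)$ while the $L$-orbit of the line $\R p_H$ is the set of lines of $\Ad$-conjugates of $\la{h}$, I would argue that $v$ spans the line of $\wedge^{\dim\la h}(\Ad(g_1)\la h)$ for some $g_1\in L$ obtained as a limit of $g_0\gamma_i$ up to scalar (here one uses that the relevant Grassmannian/projective orbit is closed enough, or more carefully that $v$ being $\rho(Q_{m_1+1})$-fixed and nonzero in $V$ together with its origin as a limit of $\Lambda$-translates forces, by Proposition~\ref{prop:discrete} and a standard compactness argument on $N(H,W)\setminus S(H,W)$, that $v\in\R^{*}\cdot g_1 p_H$ with $g_1\in N(H,W)$). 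Then $\rho(Q_{m_1+1})$ fixes the line $\R g_1 p_H$, so $g_1\inv\rho(Q_{m_1+1})g_1\subset\noroneL(H)$; since $\rho(Q_{m_1+1})$ is generated by unipotents with no nontrivial characters and $H$ is the identity component concern, in fact $g_1\inv\rho(Q_{m_1+1})g_1\subset H$. Setting $H':=g_1 H g_1\inv$ (conjugating the lattice condition accordingly, using $g_1\in N(H,W)\subset L$ and that $H\cap\Lambda$ is a lattice in $H$), $H'$ is a proper closed subgroup of $L$ containing $\rho(Q_{m_1+1})$ with $H'x_0$ closed — here one needs $H'\cap\Lambda$ a lattice in $H'$, which follows because $\gamma_i\in\Lambda$ absorbed into $g_1$ and $\pi(H)$ already has finite $H$-invariant volume — contradicting the hypothesis. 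Hence $\lambda=\mu_L$, and since the limit is independent of the subsequence, $\lambda_i\to\mu_L$ along the whole sequence.

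The main obstacle is the bookkeeping in the last paragraph: carefully identifying the limit vector $v$ with (a scalar multiple of) $g_1 p_H$ for a genuine $g_1\in N(H,W)\setminus S(H,W)$, and then upgrading ``$\rho(Q_{m_1+1})$ normalizes the line $\R g_1 p_H$'' to ``$g_1\inv\rho(Q_{m_1+1})g_1\subset H$'' and to the statement that $H'\cap\Lambda$ is a lattice in $H'$ — this is exactly the delicate part of the linearization method, where one must use that $g_0\gamma_i$ stays in a bounded-distortion regime (Proposition~\ref{prop:discrete}) and that $S(H,W)$ is avoided, so that the limiting $g_1$ does not slide into a smaller subgroup. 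The analytic-curve hypothesis enters twice: once to guarantee $\phi$ restricted to subintervals still affinely spans (so the Lebesgue-density/subinterval trick works against option~(b)), and once implicitly through Proposition~\ref{prop:main3} and Proposition~\ref{prop:m1}.
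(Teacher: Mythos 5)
Your overall strategy matches the paper's: pass to a $W$-invariant weak-$\ast$ limit via Corollary~\ref{cor:mu-return} and Theorem~\ref{thm:W-invariant}, invoke Ratner's theorem (Theorem~\ref{thm:Ratner}) to find $H\in\sH$ with $\lambda(\pi(N(H,W)))>0$ and $\lambda(\pi(S(H,W)))=0$, use Proposition~\ref{prop:main3} to produce $\gamma_i\in\Lambda$ with $a_iu(\phi(s))g_i\gamma_i p_H$ confined to a fixed compact set $\Phi_1$ for all $s\in I$, and feed this into the linear-dynamics machinery of \S\ref{sec:dynamics}. The genuine gap is in the step where you normalize $v_i:=g_0\gamma_i p_H/\norm{g_0\gamma_i p_H}$, pass to a limit $v$, and then try to identify $v$ (up to scalar) with $g_1 p_H$ for some $g_1\in N(H,W)$. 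That identification does not follow: the $L$-orbit of $p_H$ in $V$, and the $L$-orbit of the line $\R p_H$ in projective space, are in general not closed, so a normalized limit of $\Lambda$-translates of $p_H$ can perfectly well escape the orbit. Neither ``bounded-distortion regime'' language nor the discreteness of $\Lambda p_H$ by itself repairs this, and this is precisely what you flag as the ``main obstacle'' without resolving. The paper resolves it with a case split that your argument omits. Set $r_i:=\norm{\gamma_i p_H}$. If, along a subsequence, $r_i\to\infty$, then the boundedness of $a_iu(\phi(s))g_i\gamma_i p_H$ forces the normalized limit to satisfy $u(\phi(s))g_0v\in V^-(\cT)$ for all $s$, i.e.\ $\pi_{\vzero}^{\cT}(u(\phi(s))g_0v)=0$, contradicting Corollary~\ref{cor:rep2-main}. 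Hence $r_i$ is bounded; now Proposition~\ref{prop:discrete} makes $\{\gamma_ip_H\}$ a bounded discrete, hence finite, set, so after a further subsequence $\gamma_i p_H=\gamma p_H$ for a fixed $\gamma\in\Lambda$; only then does the limit vector genuinely lie in $Lp_H$. So Corollary~\ref{cor:rep2-main} is not auxiliary here: it is exactly what rules out escape to infinity in the representation, before one ever gets to apply Proposition~\ref{prop:m1}.

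Two smaller points in the endgame. First, from $g_0\inv Q_{m_1+1}g_0\subset\noroneL(H)=\Stab_L(\gamma p_H)$ you still need to manufacture a closed subgroup with a closed, finite-volume orbit; the assertion that ``$H'\cap\Lambda$ is a lattice because $\gamma_i$ is absorbed into $g_1$'' does not give this. The paper uses \cite[Thm.~2.3]{Shah:uniform} to produce a closed subgroup $H_1\subset\noroneL(H)$ containing all $\Ad$-unipotent one-parameter subgroups of $\noroneL(H)$ with $H_1\cap\Lambda$ a lattice and $\pi(H_1)$ closed, and it is this $H_1$ against which the hypothesis of the theorem is applied, forcing $g_0H_1g_0\inv=L$ and hence $H$ normal. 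Second, the deduction does not stop at ``$H$ normal'': one must then use $W\subset Q_{m_1+1}\cap H$ together with Lemma~\ref{lema:Q} to conclude $Q_{m_1+1}\subset H$, and only then invoke the hypothesis once more to get $H=L$ and thus $\lambda$ is $L$-invariant.
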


\begin{proof}
  Earlier using Theorem~\ref{thm:nondiv} we have shown that after
  passing to a subsequence, $\lambda_i\to\lambda$ in the space of
  probability measures on $\lml$, and by Theorem~\ref{thm:W-invariant}
  $\lambda$ is invariant under the $\Ad$-unipotent one-parameter
  subgroup $W$. In order to complete the proof it is enough to show
  that any such limiting measure $\lambda$ is $L$-invariant. For
  notational convenience we will identify any $g\in G$ with
  $\rho(g)\in L$.

  By Theorem~\ref{thm:Ratner} there exists $H\in\sH$ such that
  \begin{equation}
    \label{eq:NHW}
    \lambda(\pi(N(H,W))>0 \quad\textrm{and} \quad \lambda(\pi(S(H,W))=0.
  \end{equation}

  Let $C$ be a compact subset of $N(H,W)\setminus S(H,W)$ such that
  $\lambda(C)>\epsilon$ for some $\epsilon>0$. Let $g_i\toinfty g_0$
  be as sequence in $G$ such that $x_0=\pi(g_0)$ and $x_i=\pi(g_i)$
  for all $i$. Then given any neighbourhood $\cO$ of $\pi(C)$ in
  $\lml$, there exists $i_0>0$ such that for all $i\geq i_0$, we have
  $\lambda_{i}(\cO)>\epsilon$ and hence
  \begin{equation}
    \label{eq:accum}
    \frac{1}{\abs{I}}\abs{\{s\in I:z(s)a_{i}u(\phi(s))x_i
      =\pi(a_{i}z(s)u(\phi(s))g_i)\in\cO\}} > \epsilon.
  \end{equation}

  Let $\cD\subset \sA$ be as in the statement of
  Proposition~\ref{prop:main3}. Choose any compact neighbourhood
  $\Phi$ of $\cD$ in $V$. Then there exists a neighbourhood $\cO$ of
  $\pi(C)$ in $\lml$ such that one of the statements (a) or (b) of
  Proposition~\ref{prop:main3} holds for $J=I$, and any $h_1=a_i$ and
  $h_2=g_i$. For any $i>i_0$, (a) cannot hold due to \eqref{eq:accum},
  and hence (b) must hold; that is, there exists $\gamma_i\in\Lambda$
  such that
  \begin{equation}
    \label{eq:30-gammai}
    (z(s)a_{i}u(\phi(s))g_i\gamma_i)p_H=(a_{i}z(s)u(\phi(s))g_i\gamma_i)p_H\in
    \Phi, \quad\forall s\in I.
  \end{equation}
  Let $\Phi_1=\{z(s)\inv:s\in I\}\Phi$. Then $\Phi_1$ is contained in
  a compact subset of $V$, and the following holds:
\begin{equation}
  \label{eq:32-i}
  a_{i}u(\phi(s))(g_i\gamma_i)p_H\in \Phi_1, \quad \forall s\in I,\
  \forall i>i_0. 
\end{equation} 

Let $\norm{\cdot}$ be a norm on $V$. First suppose that after passing
to a subsequence,
\begin{equation}
  \label{eq:30-i}
r_i:=\norm{\gamma_ip_H}\to \infty \quad\text{as $i\to\infty$.}
\end{equation}
Then $v_i:=\gamma_ip_H/r_i\toinfty v$ for some $v\in V$, $\norm{v}=1$. Let
$R=\sup\{\norm{w}:w\in \Phi_1\}$. Then by \eqref{eq:32-i}
\begin{equation}
  \label{eq:61}
  a_iu(\phi(s))g_iv_i\leq R/r_i, \quad \forall s\in I,\ \forall i>i_0. 
\end{equation}
Since $R/r_i\toinfty 0$, and $g_iv_i\toinfty g_0v$, we conclude that
\begin{equation}
  \label{eq:69}
  u(\phi(s))g_0v_0\subset V^-(\cT), \quad\forall s\in I.
\end{equation}
Since there exists a finite set $F\subset I$ such that
$\cB=\{\phi(s):s\in F\}$ is an affine basis of $\R^{n-1}$,
\eqref{eq:69} satisfies the condition~\eqref{eq:29c} of
Corollary~\ref{cor:rep2-main} but contradicts its conclusion
\eqref{eq:37b}. Thus \eqref{eq:30-i} fails to hold after passing to a
subsequence. Therefore the set $\{\gamma_i p_H:i\in\N\}$ is
bounded. It is discrete by Proposition~\ref{prop:discrete}. Hence it
is a finite set. Therefore by passing to a subsequence there exists
$\gamma\in\Gamma$ such that
\begin{equation}
  \label{eq:70}
  \gamma_i p_H=\gamma p_H, \quad \forall i\in\N.
\end{equation}
Therefore by \eqref{eq:32-i} we get
\begin{equation}
  \label{eq:71}
    a_{i}u(\phi(s))g_i(\gamma p_H)\subset \Phi_1, \quad \forall s\in I,\
  \forall i\in\N.
\end{equation}\

Let $\pi_+^{\cT}:V\to V^+(\cT)$ be the projection parallel to
$V^0(\cT)+V^-(\cT)$. First suppose that
$\pi^{\cT}_+(u(\phi(s))g_0\gamma p_H\neq 0$ for some $s_0\in I$. Then
there exists $c>0$ and $i_1\in\N$ such that
$\norm{\pi_+^{\cT}(u(\phi(s_0))g_i\gamma p_H)}\geq c$ for all $i\geq
i_1$. But then $\norm{a_iu(\phi(s_0)g_i\gamma p_H)}\toinfty\infty$,
which contradicts \eqref{eq:71}. Therefore
\begin{equation}
  \label{eq:72}
  u(\phi(s))(g_0\gamma p_H)\subset V^0(\cT)\oplus V^-(\cT), 
  \quad \forall s\in I.
\end{equation}
Therefore by Proposition~\ref{prop:m1}, $Q_{m_1+1}$ stabilizes
$g_0\gamma p_H$. By \eqref{eq:stab},
\begin{equation}
  \label{eq:74}
g_0\inv Q_{m_1+1}g_0\subset \noroneL(H)=\Stab_L(\gamma p_H). 
\end{equation}

Since $\Lambda p_H$ is discrete, $\Lambda \noroneL(H)$ is a closed
subset of $L$. Hence $\pi(\noroneL(H))$ is closed in $\lml$.  By
\cite[Thm.~2.3]{Shah:uniform} there exists a closed subgroup $H_1$ of
$\noroneL(H)$ containing all $\Ad$-unipotent one-parameter subgroups
of $L$ contained in $\noroneL(H)$ such that $H_1\cap\Lambda$ is a
lattice in $H_1$ and $\pi(H_1)$ is closed. Since $Q_{m_1+1}$ is
generated by unipotent one-parameter subgroups of $\SL(n,\R)$, by
\eqref{eq:74}, $g_0\inv Q_{m_1+1}g_0\subset H_1$. Thus
$Q_{m_1+1}\subset g_0H_1g_0\inv$ and
$(g_0H_1g_0\inv)x_0=g_0\pi(H_1)$ is closed and admits a finite
$g_0Hg_0\inv$-invariant measure. Hence by the hypothesis of the
theorem, $g_0H_1g_0\inv=L$. Therefore $H$ is a normal subgroup of
$L$. Therefore by Theorem~\ref{thm:Ratner}, $\lambda$ is $H$-invariant.

By \eqref{eq:NHW} there exists $g\in \pi(N(H,W))\neq \emptyset$. Then
$W\subset gHg\inv=H$. Thus $W\subset Q_{m_1+1}\cap H$, which is a
normal subgroup of $Q_{m_1+1}\cap H$. Hence by Lemma~\ref{lema:Q} we
have $Q_{m_1+1}\subset H$. Since $Hx_0=\pi(Hg_0)=g_0\pi(H)$ is closed
and admits a finite $H$-invariant measure, by our hypothesis
$H=L$. Therefore $\lambda$ is $L$-invariant.
\end{proof}

\begin{proof}[Proof of Theorem~\ref{thm:main-action}]  
Let the notation be as in \S\ref{subsec:cT}. Then
  \[
\vtau_i-\bar\vtau_i\toinfty \tilde\vtau_0:=
  (\tau(1),\dots,\tau(n-1))\in\R^{n-1}, \quad \text{as $i\to\infty$}.
\]
Therefore $a_{\vtau_i}a_{\bar\vtau_i}\inv \toinfty
a_{\tilde\vtau_0}$. Hence in $G/Q_{m_1+1}$,
  \begin{equation}
    \label{eq:110}
  a_{\vtau_i}Q_{m_1+1}\toinfty a_{\tilde\vtau_0}Q_{m_1+1}=a_{\vtau_0}Q_{m_1+1}.  
  \end{equation}
  Therefore to prove \eqref{eq:9L}, it is enough to prove the theorem
  in the case of $\tau_i=\bar\tau_i$.

  We put $x_i=x_0$ for all $i$. As noted before there exists a
  smallest closed subgroup $H$ of $L$ containing $\rho(Q_{m_1+1})$
  such that the orbit $Hx_0$ is closed and admits a finite
  $H$-invariant measure. Therefore without loss of generality we may
  replace $L$ by $H$. Now to complete the proof of the theorem, we
  only need to prove that $\mu$ is $L$-invariant.

  Since $\phi$ is analytic, the condition~\eqref{eq:zs} fails to hold
  only for finitely many points, and it is straightforward to reduce
  the proof of the theorem to the case where \eqref{eq:zs} holds for
  all $s\in I$. Now the difference between $\mu_i$ and $\lambda_i$ is
  only through $\{z(s):s\in I\}$. Given $\epsilon>0$, there exists
  $\delta>0$ such that if $J=[s_1,s_2]\subset I$ and $0<s_2-s_1<\delta$
  then $\abs{f(z(s_1)\inv z(s)x)-f(x)}\leq \epsilon$ for all $s\in J$ and
  $x\in \lml$. We define $\lambda_i^J$ by putting $J$ in place of $I$
  in \eqref{eq:lambda_i}, similarly we define $\mu_i^J$. Then by
  Theorem~\ref{thm:lambda}, $\lambda_i^J\to\lambda_L$, where
  $\lambda_L$ is the unique $L$-invariant probability measure on
  $\lml$. Since $a_iz(s)=z(s)a_i$, we deduce that
  \begin{equation}
    \label{eq:112}
    \Abs{\int f\,d\mu_i^J-\int f(z(s_1)\inv x)\,d\lambda_L(x)}\leq \epsilon,
  \end{equation}
  for all large $i$.  Since $\lambda_L$ is $z(s_1)$-invariant, the
  second integral is same as $\int f\,d\lambda_L$. Now partitioning
  $I$ into finitely many $J$'s with $\abs{J}\leq \delta$, we deduce
  \begin{equation}
    \label{eq:128}
    \Abs{\int f\,d\mu_i-\int f\,d\lambda_L}\leq \epsilon,
  \end{equation}
for all large $i$. Thus $\mu_i\toinfty\lambda_L$. 
\end{proof}

\begin{proof}[Proof of Theorem~\ref{thm:main-uniform}]
  The above proof applies to this case also. Here we are given that
  $x_i\to x_0$ is a convergent sequence and there is no proper closed
  subgroup $H$ of $G$ containing $\rho(Q_{m_1+1})$ such that $Hx_0$ is
  closed and admits a finite $H$-invariant measure. Therefore there is
  no need to replace $H$ by $L$ as in the above proof.
\end{proof}

The proof of Theorem~\ref{thm:uniform:action} can be obtained by
combining the ideas of the proof of Theorem~\ref{thm:lambda},
Proposition~\ref{prop:rep2} and the general strategy of the proof of
\cite[Theorem~3]{Dani+Mar:limit}.

\end{document}